\newtheorem{lem}{Lemma}[section]
\newtheorem{prop}[lem]{Proposition}
\newtheorem{thm}[lem]{Theorem}
\newtheorem{cor}[lem]{Corollary}
\newtheorem{conj}[lem]{Conjecture}
\newtheorem{rem}[lem]{Remark}
\newtheorem{exmp}[lem]{Example}
\newtheorem{assu}{Assumption}
\numberwithin{equation}{section}
\newcommand{\BA}{\circle{4}}
\newcommand{\BB}{\circle*{4}}
\newcommand{\BBBA}{\circle{15}}
\newcommand{\BBB}{\circle*{15}}
\begin{document}

\title[Invariant measures for the box-ball system]{Invariant measures for the box-ball system based on stationary Markov chains and periodic Gibbs measures}

\author[D.~A.~Croydon]{David A. Croydon}
\address{Department of Advanced Mathematical Sciences, Graduate School of Informatics, Kyoto University, Sakyo-ku, Kyoto 606--8501, Japan}
\email{croydon@acs.i.kyoto-u.ac.jp}

\author[M.~Sasada]{Makiko Sasada}
\address{Graduate School of Mathematical Sciences, University of Tokyo, 3-8-1, Komaba, Meguro-ku, Tokyo, 153--8914, Japan}
\email{sasada@ms.u-tokyo.ac.jp}

\begin{abstract}
The box-ball system (BBS) is a simple model of soliton interaction introduced by Takahashi and Satsuma in the 1990s. Recent work of the authors, together with Tsuyoshi Kato and Satoshi Tsujimoto, derived various families of invariant measures for the BBS based on two-sided stationary Markov chains \cite{CKSS}. In this article, we survey the invariant measures that were presented in \cite{CKSS}, and also introduce a family of new ones for periodic configurations that are expressed in terms of Gibbs measures. Moreover, we show that the former examples can be obtained as infinite volume limits of the latter. Another aspect of \cite{CKSS} was to describe scaling limits for the box-ball system; here, we review the results of \cite{CKSS}, and also present scaling limits other than those that were covered there. One, the zigzag process has previously been observed in the context of queuing; another, a periodic version of the zigzag process, is apparently novel. Furthermore, we demonstrate that certain Palm measures associated with the stationary and periodic versions of the zigzag process yield natural invariant measures for the dynamics of corresponding versions of the ultra-discrete Toda lattice.
\end{abstract}

\keywords{Box-ball system, Pitman's transformation, invariant measure, Gibbs measure, scaling limits}

\subjclass[2010]{37B15 (primary), 60G50, 60J10, 60J65, 82B99 (secondary)}

\date{\today}

\maketitle

\section{Introduction}

The box-ball system (BBS) is an interacting particle system introduced in the 1990s by physicists Takahashi and Satsuma as a model to understand solitons, that is, travelling waves \cite{takahashi1990}. In particular, it is connected with the Korteweg-de Vries (KdV) equation, which describes shallow water waves; see \cite{TTMS} for background. The BBS is briefly described as follows. Initially, each site of the integer lattice $\mathbb{Z}$ contains a ball (or particle -- we will use the two terms interchangeably) or is vacant. For simplicity at this point, suppose there are only a finite number of particles. The system then evolves by means of a `carrier', which moves along the integers from left to right (negative to positive). When the carrier sees a ball it picks it up, and when it sees a vacant site it puts a ball down (unless it is not carrying any already, in which case it does nothing). See Figure \ref{bbsfig} for an example realisation.

To date, much of the interest in the BBS has come from applied mathematicians/theoretical physicists, who have established many beautiful combinatorial properties of the BBS, see \cite{IKT, T, TT} for introduction to such work. What has only recently started to be explored, however, are the probabilistic properties of the BBS resulting from a random initial starting configuration, see \cite{CKSS, Ferrari, FG, Lev} for essentially the only current literature on this topic. One particularly natural question in this direction is that of invariance, namely, which random configurations have a distribution that is invariant under the action of the box-ball system? In this article, we describe the invariant measures based on two-sided stationary Markov chains that were identified in \cite{CKSS}, and also introduce a family of new ones for periodic configurations that are expressed in terms of Gibbs measures.

\begin{figure}[b]
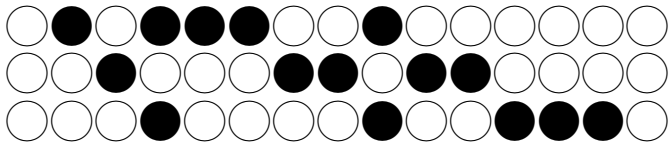

\begin{center}
{\BBBA$\:$\BBB$\:$\BBBA$\:$\BBB$\:$\BBB$\:$\BBB$\:$\BBBA$\:$\BBBA$\:$\BBB$\:$\BBBA$\:$\BBBA$\:$\BBBA$\:$\BBBA$\:$\BBBA$\:$\BBBA\\
\vspace{5pt}
\BBBA$\:$\BBBA$\:$\BBB$\:$\BBBA$\:$\BBBA$\:$\BBBA$\:$\BBB$\:$\BBB$\:$\BBBA$\:$\BBB$\:$\BBB$\:$\BBBA$\:$\BBBA$\:$\BBBA$\:$\BBBA\\
\vspace{5pt}
\BBBA$\:$\BBBA$\:$\BBBA$\:$\BBB$\:$\BBBA$\:$\BBBA$\:$\BBBA$\:$\BBBA$\:$\BBB$\:$\BBBA$\:$\BBBA$\:$\BBB$\:$\BBB$\:$\BBB$\:$\BBBA}
\end{center}
\caption{Two evolutions of the BBS. Black circles represent particles, white circles represent vacant sites.}\label{bbsfig}
\end{figure}

Given the transience of the system, i.e.\ each particle moves at least one position to the right on each time step of the dynamics, the question of invariance in distribution immediately necessitates the consideration of configurations $\eta=(\eta_n)_{n\in\mathbb{Z}}\in\{0,1\}^{\mathbb{Z}}$, where we write $\eta_n=1$ if there is a particle at location $n$ and $\eta_n=0$ otherwise, that incorporate an infinite number of particles on both the negative and positive axes. Of course, for such configurations, the basic description of the BBS presented above is no longer applicable, as one has to consider what it means for the carrier to traverse the integers from $-\infty$. This issue was addressed systematically in \cite{CKSS}, and at the heart of this study was a link between the BBS dynamics and the transformation of reflection in the past maximum that Pitman famously used to connect a one-dimensional Brownian motion with a three-dimensional Bessel process in \cite{Pitman}. We now describe this connection. Given a configuration $\eta\in\{0,1\}^{\mathbb{Z}}$, introduce a path encoding $S:\mathbb{Z}\rightarrow\mathbb{Z}$ by setting $S_0:=0$, and
\[S_n-S_{n-1}:=1-2\eta_n,\qquad \forall n\in\mathbb{Z},\]
and then define $TS:\mathbb{Z}\rightarrow\mathbb{Z}$ via the relation
\[(TS)_n:=2M_n-S_n-2M_0,\qquad \forall n\in\mathbb{Z},\]
where $M_n:=\sup_{m\leq n}S_m$ is the past maximum of $S$. Clearly, for the above formula to be well-defined, we require $M_0<\infty$. If this is the case, then we let $T\eta\in\{0,1\}^\mathbb{Z}$ be the configuration given by
\begin{equation}\label{teta}
(T\eta)_n:=\mathbf{1}_{\{(TS)_n-(TS)_{n-1}=-1\}},\qquad \forall n\in\mathbb{Z},
\end{equation}
(so that $TS$ is the path encoding of $T\eta$). It is possible to check that the map $\eta\mapsto T\eta$ coincides with the original definition of the BBS dynamics in the finite particle case \cite[Lemma 2.3]{CKSS}, and moreover is consistent with an extension to the case of a bi-infinite particle configuration satisfying $M_0<\infty$ from a natural limiting procedure \cite[Lemma 2.4]{CKSS}. We thus restrict to configurations for which $M_0<\infty$, and take \eqref{teta} as the definition of the BBS dynamics in this article. We moreover note that the process $W=(W_n)_{n\in\mathbb{Z}}$ given by
\[W_n:=M_n-S_n\]
can be viewed as the carrier process, with $W_n$ representing the number of balls transported by the carrier from $\{\dots,n-1,n\}$ to $\{n+1,n+2,\dots\}$; see \cite[Section 2.5]{CKSS} for discussion concerning the (non-)uniqueness of the carrier process.

Beyond understanding the initial step of the BBS dynamics, in the study of invariant random configurations it is natural to look for measures supported on the set of configurations for which the dynamics are well-defined for all times. Again, such an issue was treated carefully in \cite{CKSS}, with a full characterisation being given of the sets of configurations for which the one-step (forwards and backwards) dynamics are reversible (i.e.\ invertible), and for which the dynamics can be iterated for all time. Precisely, in \cite[Theorem 1.1]{CKSS} explicit descriptions were given for the sets:
\[\mathcal{S}^{rev}:=\left\{S\in \mathcal{S}^0\::\:\mbox{$TS$, $T^{-1}S$, $T^{-1}TS$, $TT^{-1}S$ well-defined, $T^{-1}TS=S$, $TT^{-1}S=S$}\right\},\]
where we have written $\mathcal{S}^0:=\{S:\mathbb{Z}\rightarrow \mathbb{Z}:\: S_0=0,\:|S_n-S_{n-1}|=1,\:\forall n\in\mathbb{Z}\}$ for the set of two-sided nearest-neighbour paths started from 0 (i.e.\ path encodings for configurations in $\{0,1\}^\mathbb{Z}$), and $T^{-1}$ for the inverse operation to $T$ that is given by `reflection in future minimum', see \cite[Section 2.6]{CKSS} for details; and also the invariant set
\[\mathcal{S}^{inv}:=\left\{S\in \mathcal{S}^{0}\::\:\mbox{$T^kS\in \mathcal{S}^{rev}$ for all $k\in\mathbb{Z}$}\right\}.\]
Whilst in this article we do not need to make full use of the treatment of these sets from \cite{CKSS}, we note the following important subset of path encodings
\begin{equation}\label{slin}
\mathcal{S}^{lin}:=\left\{S\in \mathcal{S}^{0}\::\:\lim_{|n|\rightarrow\infty}\frac{S_n}{n}=c\mbox{ for some }c>0\right\},
\end{equation}
consisting of asymptotically linear functions with a strictly positive drift. It is straightforward to check from the description given in \cite[Theorem 1.1]{CKSS} that $\mathcal{S}^{lin}\subseteq\mathcal{S}^{inv}\subseteq\mathcal{S}^{rev}$.

With the preceding preparations in place, we are ready to discuss directly the topic of invariance in distribution for random configurations, or equivalently particle encodings. In \cite{CKSS}, two approaches were pursued. The first was to relate the invariance of the BBS dynamics to the stationarity of the particle current across the origin, see \cite[Theorem 1.6]{CKSS}. Whilst the latter viewpoint does also provide an insight into the ergodicity of the transformation $\eta\mapsto T\eta$, in checking invariance in examples a more useful result was \cite[Theorem 1.7]{CKSS}, which relates the distributional invariance of $\eta$ under $T$ to two natural symmetry conditions -- one concerning the configuration itself, and one concerning the carrier process. In particular, to state the result in question, we introduce the reversed configuration $\overleftarrow{\eta}$, as defined by setting
\[\overleftarrow{\eta}_n=\eta_{-(n-1)},\]
and the reversed carrier process $\bar{W}$, given by
\[\bar{W}_n=W_{-n}.\]

\begin{thm}[See {\cite[Theorem 1.7]{CKSS}}]\label{mrd} Suppose $\eta$ is a random particle configuration, and that the distribution of the corresponding path encoding $S$ is supported on $\mathcal{S}^{rev}$. It is then the case that any two of the three following conditions imply the third:
\begin{equation}\label{threeconds}
\overleftarrow{\eta}\buildrel{d}\over{=}\eta,\qquad \bar{W}\buildrel{d}\over{=}W,\qquad T\eta\buildrel{d}\over{=}\eta.
\end{equation}
Moreover, in the case that two of the above conditions are satisfied, then the distribution of $S$ is actually supported on $\mathcal{S}^{inv}$.
\end{thm}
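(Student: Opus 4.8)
The plan is to work at the level of the path encoding $S$ and the carrier process $W$, and to exploit the algebraic identities relating $T$, the reversal operations, and Pitman's transformation. The key structural fact (which I would take from \cite{CKSS}, or re-derive) is that the pair $(\overleftarrow{\eta}, \bar W)$ is obtained from $(\eta, W)$ by a deterministic involution: reflecting a configuration in space and simultaneously reading the carrier backwards corresponds, on path encodings, to running $T$ ``in reverse'', i.e.\ there is an identity of the form $\overleftarrow{T\eta} = $ (some explicit transform of) $\overleftarrow{\eta}$ built out of $W$, and conversely $T$ applied to $\overleftarrow{\eta}$ recovers $\eta$ reversed once one also reverses the carrier. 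Concretely, on $\mathcal{S}^{rev}$ the triple of maps $\eta\mapsto T\eta$, $\eta\mapsto\overleftarrow{\eta}$, and $W\mapsto\bar W$ generate a group (essentially $(\mathbb{Z}/2)^2$) of transformations of the pair $(\eta,W)$, and the three conditions in \eqref{threeconds} are precisely the statements that $(\eta,W)$ is invariant in distribution under the three nontrivial elements of that group. Since the composition of any two of these three involutions is the third, invariance under any two forces invariance under the third; this is the whole content of the first assertion, once the group relation is established.

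The steps, in order, would be: (i) Recall from \cite{CKSS} the precise deterministic relation expressing $\overleftarrow{\eta}$, $T\eta$ and $\bar W$ in terms of one another — in particular that $T$ acting on a configuration, when viewed through the carrier, is conjugate to spatial reversal composed with carrier reversal. This is where one uses that $S\in\mathcal{S}^{rev}$, so that $T^{-1}T S = S$ and all relevant quantities are well-defined. (ii) Phrase the three conditions of \eqref{threeconds} as invariance of the law of the pair $(\eta, W)$ (equivalently of $S$ together with its carrier) under the three involutions $\iota_1, \iota_2, \iota_3$, and verify the group law $\iota_1\iota_2 = \iota_3$ (and permutations thereof) as an identity of deterministic maps on $\mathcal{S}^{rev}$. (iii) Conclude: if $\mu$ is the law of $(\eta,W)$ and $\mu\circ\iota_i^{-1} = \mu$ for two indices $i$, then applying this twice gives $\mu\circ\iota_j^{-1} = \mu$ for the third. (iv) For the ``moreover'' clause, upgrade support on $\mathcal{S}^{rev}$ to support on $\mathcal{S}^{inv}$: here one shows that if the law is invariant under $T$ (which we now have, as one of the three conditions holds or is deduced), then almost surely $T^k S \in \mathcal{S}^{rev}$ for all $k\in\mathbb{Z}$. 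The point is that $\mathcal{S}^{rev}\setminus\mathcal{S}^{inv}$ is exactly the set of $S$ that eventually leave $\mathcal{S}^{rev}$ under forward or backward iteration of $T$; $T$-invariance of the measure, combined with the fact that $T:\mathcal{S}^{rev}\to\mathcal{S}^{rev}$ is measure-theoretically invertible on the support, forces this ``bad'' set to have measure zero, via a standard Poincaré-recurrence-type or conservativity argument.

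The main obstacle I anticipate is step (i): pinning down the exact deterministic identity linking $T$, spatial reversal, and carrier reversal, including getting the index shifts right (the reversal $\overleftarrow{\eta}_n = \eta_{-(n-1)}$ and $\bar W_n = W_{-n}$ are offset, reflecting the fact that $W_n$ lives ``between'' sites $n$ and $n+1$), and in particular verifying that the relevant carrier process is the canonical one so that $\bar W$ is again a genuine carrier process for $\overleftarrow{\eta}$. This is essentially the content of \cite[Theorem 1.7]{CKSS} and is the crux; once it is in hand, the probabilistic part (steps ii–iii) is a soft measure-theoretic argument, and step (iv) is a routine conservativity/recurrence argument using that $T$ is a bijection on $\mathcal{S}^{rev}$ preserving the measure. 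A secondary technical point is ensuring measurability of all the maps involved and that ``$\bar W \buildrel{d}\over{=} W$'' is interpreted jointly with $\eta$ in the right way — but given the setup of \cite{CKSS}, where $W$ is a deterministic function of $\eta$ on $\mathcal{S}^{rev}$, this is not a serious difficulty.
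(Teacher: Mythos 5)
This theorem is imported from \cite{CKSS} and not proved in the present survey, so your proposal can only be measured against the argument of \cite[Theorem 1.7]{CKSS}; your outline does capture its architecture. The crux is exactly the deterministic identity you defer to step (i): on $\mathcal{S}^{rev}$ the map $\Psi:\eta\mapsto\overleftarrow{T\eta}$ is an involution, $\bar{W}$ is the canonical carrier process of $\Psi(\eta)$ (note: of $\overleftarrow{T\eta}$, not of $\overleftarrow{\eta}$ as you write at one point -- the up-steps of $\bar{W}$ occur precisely at the balls of $\overleftarrow{T\eta}$, its down-steps at the balls of $\overleftarrow{\eta}$), and $T\Psi(\eta)=\overleftarrow{\eta}$. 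Since a carrier determines its configuration through its up-steps, the condition $\bar{W}\buildrel{d}\over{=}W$ is equivalent to $\Psi(\eta)\buildrel{d}\over{=}\eta$, and the three conditions of \eqref{threeconds} become invariance in law under $\overleftarrow{\,\cdot\,}$, under $\Psi$, and under $T=\overleftarrow{\,\cdot\,}\circ\Psi$, whence any two imply the third. Two inaccuracies in your framing should be fixed. First, the group generated is not $(\mathbb{Z}/2)^2$ and the three maps are not all involutions: $T$ has infinite order in general. The argument survives because $\overleftarrow{\,\cdot\,}$ and $\Psi$ are involutions and invariance under a bijection is equivalent to invariance under its inverse. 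Second, the ``moreover'' clause requires no recurrence or conservativity input: once $T\eta\buildrel{d}\over{=}\eta$ holds, one gets $\mathbf{P}(T^kS\in\mathcal{S}^{rev})=\mathbf{P}(S\in\mathcal{S}^{rev})=1$ for all $k\geq 0$ by inductively pushing the law forward through the a.s.-defined map $T$, and likewise for $k<0$ using that $T^{-1}$ is a.s.\ defined on $\mathcal{S}^{rev}$ with $T^{-1}TS=S$ and $T^{-1}S\buildrel{d}\over{=}S$; a countable intersection of full-measure events then puts the law on $\mathcal{S}^{inv}$.

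The genuine gap is that the central deterministic lemma is asserted rather than proved. Identifying $\bar{W}$ as the \emph{canonical} carrier of $\overleftarrow{T\eta}$ (rather than merely a nonnegative process with the right increments -- recall the non-uniqueness discussed in \cite[Section 2.5]{CKSS}) is precisely where the hypothesis $S\in\mathcal{S}^{rev}$ is consumed, and it is a standalone lemma in \cite{CKSS}, not something that follows from the rest of your outline. Without it, the equivalence between $\bar{W}\buildrel{d}\over{=}W$ and $\Psi(\eta)\buildrel{d}\over{=}\eta$, on which all three implications rest, is unsupported.
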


As an application of the previous result, the following  fundamental examples of invariant random configurations were presented in \cite[Theorem 1.8]{CKSS}:
\begin{itemize}
  \item The particle configuration $(\eta_n)_{n\in\mathbb{Z}}$ given by a sequence of independent identically distributed (i.i.d.) Bernoulli random variables with parameter $p\in[0,\frac{1}{2})$.
  \item The particle configuration $(\eta_n)_{n\in\mathbb{Z}}$ given by a two-sided stationary Markov chain on $\{0,1\}$ with transition matrix
\[\left(
  \begin{array}{cc}
    1-p_0 & p_0 \\
    1-p_1 & p_1 \\
  \end{array}
\right)\]
where $p_0\in (0,1)$, $p_1\in[0,1)$ satisfy $p_0+p_1<1$.
  \item For any $K\in\mathbb{Z}_+$, the particle configuration $(\eta_n)_{n\in\mathbb{Z}}$ given by conditioning a sequence of i.i.d.\ Bernoulli random variables with parameter $p\in(0,1)$ on the event $\sup_{n\in\mathbb{Z}}W_n\leq K$.
\end{itemize}
Further details of these are recalled in Subsections \ref{iidsec}-\ref{boundedsec}, respectively. Another easy example, discussed in \cite[Remark 1.13]{CKSS}, arises from a consideration of the periodic BBS introduced in \cite{YT} -- that is, the BBS that evolves on the torus $\mathbb{Z}/N\mathbb{Z}$. As commented in \cite{CKSS}, if we repeat a configuration of length $N$ with strictly fewer than $N/2$ balls in a cyclic fashion, then we obtain a configuration with path encoding contained in $\mathcal{S}^{lin}$, and, by placing equal probability on each of the distinct configurations that we see as the BBS evolves, we obtain an invariant measure for the system.

Now, it should be noted that \cite{CKSS} was not the first study to identify the first two configurations above (i.e.\ the i.i.d.\ and Markov configurations) as invariant under $T$. Such results had previously been established in queueing theory -- the invariance of the i.i.d.\ configuration can be seen as a discrete time analogue of the classical theorem of Burke \cite{Burke}, and the invariance of the Markov configuration was essentially proved in \cite{HMOC}. However, in the study of invariants for Pitman's transformation, the BBS does add an important new perspective -- the central role of solitons. Indeed, in the original study of \cite{takahashi1990}, it was observed that configurations can be decomposed into a collection of ‘basic strings’ of the form (1,0), (1,1,0,0), (1,1,1,0,0,0), etc., which act like solitons in that they are preserved by the action of the carrier, and travel at a constant speed (depending on their length) when in isolation, but experience interactions when they meet. Moreover, in the enlightening recent work of \cite{Ferrari} (where the invariance of the i.i.d.\ configuration was again observed), it was conjectured that any invariant measure on configurations can be decomposed into independent measures on solitons of different sizes. (The latter study investigated the speeds of solitons in invariant random configurations under continued evolution of the BBS system.) See also \cite{FG} for a related follow-up work.

Motivated in part by \cite{Ferrari}, in this article we introduce a class of invariant periodic configurations whose laws are described in terms of Gibbs measures involving a soliton decomposition. (These were already described formally in \cite[Remark 1.12]{CKSS}, and are closely paralleled by the measures studied in \cite{FG}.) Specifically, we first fix a cycle length $N\in\mathbb{N}$, and then define a random variable $(\eta^N_n)_{n=1}^{N}$ taking values in $\{0,1\}^N$ by setting
\begin{equation}\label{gibbs}
\mathbf{P}\left((\eta^N_n)_{n=1}^{N}=(x_n)_{n=1}^{N}\right)=\frac{1}{Z}\exp\left(-\sum_{k=0}^\infty\beta_kf_k\left((x_n)_{n=1}^{N}\right)\right)\mathbf{1}_{\{f_0\left((x_n)_{n=1}^{N}\right)<N/2\}},
\end{equation}
for $(x_n)_{n=1}^{N}\in\{0,1\}^N$, where $\beta_k\in\mathbb{R}\cup\{\infty\}$ for each $k\geq 0$,
\[f_0\left((x_n)_{n=1}^{N}\right):=\#\left\{\mbox{particles in $(x_n)_{n=1}^{N}$}\right\},\]
\[f_k\left((x_n)_{n=1}^{N}\right):=\#\left\{\mbox{solitons of size $\geq k$ in $(x_n)_{n=1}^{N}$}\right\},\qquad \forall k\in\mathbb{N},\]
and $Z$ is a normalising constant. We then extend to $\eta^N=(\eta^N_n)_{n\in\mathbb{Z}}$ by cyclic repetition; the law of $\eta^N$ is our Gibbs measure. (Further details are provided in Subsection \ref{gibbssec}.) The invariance under $T$ of such a random configuration is checked as Corollary \ref{gibbsinv} below. Moreover, in Proposition \ref{infvollim}, it is shown that each of the three configurations of \cite[Theorem 1.8]{CKSS} can be obtained as an infinite volume ($N\rightarrow\infty$) limit of these periodic configurations.

\begin{rem} In this article, we are using the term `Gibbs measure' in a loose sense. Given that the expression at \eqref{gibbs} incorporates the infinite number of conserved quantities for the integrable system that is the BBS, following \cite{RDYO, RMO} (see also the review \cite{VR}), it might rather be seen as a `generalised Gibbs measure'. Since we plan to present a more comprehensive study of Gibbs-type measures for the BBS in a following article, we leave further discussion of this point until the future.
\end{rem}

The description of the path encoding of a configuration and its evolution under the BBS dynamics provides a convenient framework for deriving scaling limits. In \cite{CKSS}, the most natural example from the point of view of probability theory, in which the path encodings of a sequence of i.i.d.\ configurations of increasing density were rescaled to a two-sided Brownian motion with drift, was presented. Not only did the latter result provide a means to establishing the invariance of two-sided Brownian motion with drift under Pitman's transformation (a result which was already known from the queuing literature,  see \cite[Theorem 3]{OCY}, and \cite{HW} for an even earlier proof), but it provided motivation to introduce a model of BBS on $\mathbb{R}$. (A particular version of the latter model is checked to be integrable in \cite{CST}.) Specifically, this was given by applying Pitman's transformation to elements $S$ of $C(\mathbb{R},\mathbb{R})$ satisfying $S_0=0$ and $\sup_{t\leq 0}S_t<\infty$. In this article, we recall the aforementioned scaling limit (see Subsection \ref{BMsec}), and also give its periodic variant (see Subsection \ref{perbmsec}), as well as discuss a continuous version of the bounded soliton example (see Subsection \ref{contbounded}). As another important example, we describe a parameter regime in which the Markov configuration can be rescaled to the zigzag process, which consists of straight line segments of random length and alternating gradient $+1$ or $-1$  (see Subsection \ref{zzsec}). The description of the latter process as a scaling limit readily yields its invariance under Pitman's transformation (this result also appears in the queueing literature, see \cite{HMOC}). We also give a periodic version of zigzag process, show it is a scaling limit of cyclic Markov configurations, and establish its invariance under Pitman's transformation -- a result that we believe is new (see Subsection \ref{perzzsec}). From the point of view of integrable systems, the transformation of the zigzag process (and its periodic counterpart) under Pitman's transformation can be seen as describing the dynamics of the ultra-discrete Toda lattice (and its periodic counterpart, respectively) started from certain random initial conditions. By considering certain Palm measures associated with the zigzag process, the results of this article give natural invariant probability measures for the latter system as well (see Section \ref{palmsec}).

The remainder of this article is organised as follows. In Section \ref{discsec}, we present our examples of discrete invariant measures for the transformation $\eta\mapsto T\eta$. In Section \ref{contsec}, we detail the scaling limit framework, and explain how this can be applied to deduce invariance under Pitman's transformation of various random continuous stochastic processes. In Section \ref{palmsec}, we introduce Palm measures for the zigzag process, and use these to derive invariant measures for the ultra-discrete Toda lattice. Finally, in Section \ref{condsec}, we give a brief presentation concerning the connection between invariance under $T$ for a two-sided process and the laws of a conditioned versions of the corresponding one-sided process. NB. Regarding notational conventions, we write $\mathbb{N}=\{1,2,3,\dots\}$ and $\mathbb{Z}_+=\{0,1,2,\dots\}$.

\section{Discrete invariant measures}\label{discsec}

In the first part of this section (Subsections \ref{iidsec}-\ref{boundedsec}), we recall the invariant measures for the box-ball system (or equivalently the discrete-space version of Pitman's transformation) that were studied in \cite{CKSS}. As established in \cite{CKSS}, these represent all the invariant measures whose path encodings are supported on $\mathcal{S}^{rev}$ for which either the configuration $\eta$ or the carrier process $W$ is a two-sided stationary Markov chain (see \cite[Remark 1.10]{CKSS} in particular). Following this, in Subsection \ref{gibbssec}, we introduce a family of new invariant measures on periodic configurations based on certain Gibbs measures, and show that all the earlier examples can be obtained as infinite volume limits of these.

\subsection{Independent and identically distributed initial configuration}\label{iidsec} Perhaps the most fundamental invariant measure for the box-ball system is the case when $\eta$ is given by a sequence of independent and identically distributed Bernoulli random variables with parameter $p$. To ensure the law of the associated path encoding has distribution supported on $\mathcal{S}^{lin}$ (as defined at \eqref{slin}), we require $p<\frac12$. It is also clear that $\overleftarrow{\eta}\buildrel{d}\over{=}\eta$, and so the first of the conditions at \eqref{threeconds} is fulfilled. Moreover, the second of the conditions at \eqref{threeconds}, i.e.\ that $\bar{W}\buildrel{d}\over{=}W$, readily follows from the following description of the carrier process $W$ as a Markov chain. Indeed, the equations \eqref{Wprobs} and \eqref{pidef} below imply that detailed balance is satisfied by $W$, and thus it is reversible. As a result, Theorem \ref{mrd} can immediately be applied to deduce the invariance of the i.i.d.\ configuration, which we state precisely as Corollary \ref{revcor}.

{\lem[See {\cite[Lemma 3.13]{CKSS}}] If $\eta$ is given by a sequence of i.i.d.\ Bernoulli($p$) random variables with $p\in[0,\frac12)$, then $W$ is a two-sided stationary Markov chain with transition probabilities given by
\begin{equation}\label{Wprobs}
\mathbf{P}\left(W_{n}=W_{n-1}+j\:\vline\: W_{n-1}\right)=\left\{\begin{array}{ll}
                                                p, & \mbox{if }j=1,\\
                                                1-p, & \mbox{if }W_{n-1}>0\mbox{ and }j=-1,\\
                                                1-p, & \mbox{if }W_{n-1}=0\mbox{ and }j=0.\\
                                              \end{array}\right.
\end{equation}
The stationary distribution of this chain is given by $\pi=(\pi_x)_{x\in\mathbb{Z}_+}$, where
\begin{equation}\label{pidef}
\pi_x=\left(\frac{1-2p}{1-p}\right)\left(\frac{p}{1-p}\right)^x,\qquad \forall x\in\mathbb{Z}_+.
\end{equation}
}

{\cor[See {\cite[Corollary 3.14]{CKSS}}]\label{revcor} If $\eta$ is a sequence of i.i.d.\ Bernoulli($p$) random variables with $p\in[0,\frac12)$, then the three conditions of \eqref{threeconds} are satisfied. In particular, $\eta$ is invariant in distribution under $T$.}

\subsection{Markov initial configuration}\label{markovsec} As a generalisation of the i.i.d.\ configuration of the previous section, we next consider the case when $\eta$ is a two-sided stationary Markov chain on $\{0,1\}$ with transition matrix
\begin{equation}\label{pdef}
P=\left(
      \begin{array}{cc}
        1-p_0 & p_0 \\
        1-p_1 & p_1 \\
      \end{array}
    \right),
\end{equation}
by which we mean
\[\mathbf{P}\left(\eta_{n+1}=1\:\vline\:\eta_n=j\right)=p_j,\qquad j\in\{0,1\},\]
for some parameters $p_0\in (0,1)$, $p_1\in[0,1)$. Note that we recover the i.i.d.\ case when $p_0=p_1=p$. The stationary distribution of this chain is given by
\begin{equation}\label{rhodef}
\rho=\mathbf{P}\left(\eta_0=1\right)=\frac{p_0}{1-p_1+p_0},
\end{equation}
and so to ensure the associated path encoding has distribution supported on $\mathcal{S}^{lin}$, we thus need to assume $p_0+p_1<1$. Since detailed balance is satisfied by $\eta$, we have that $\overleftarrow{\eta}\buildrel{d}\over{=}\eta$. Moreover, although $W$ is not a Markov chain, it is a stationary process whose marginal distributions are given by the following lemma, and \cite[Theorem 2]{HMOC} gives that $\bar{W}\buildrel{d}\over{=}W$. Thus we obtain from another application of Theorem \ref{mrd} the generalisation of Corollary \ref{revcor} to the Markov case, see Corollary \ref{markovcor} below.

{\lem[See {\cite[Lemma 3.15]{CKSS}}]\label{w0pm} If $\eta$ is the two-sided stationary Markov chain described above with
$p_0\in (0,1)$, $p_1\in[0,1)$ satisfying $p_0+p_1<1$, then
\[\mathbf{P}\left(W_{0}=m\right)=\left\{\begin{array}{ll}
                                                \frac{1-p_0-p_1}{(1-p_0)(1+p_0-p_1)}, & \mbox{if }m=0,\\
                                                \frac{p_0(1-p_0+p_1)(1-p_0-p_1)}{(1-p_0)^2(1+p_0-p_1)}\left(\frac{p_1}{1-p_0}\right)^{m-1}, & \mbox{if }m\geq 1.\\
                                              \end{array}\right.\]}

{\cor[See {\cite[Corollary 3.16]{CKSS}}]\label{markovcor} If $\eta$ is the two-sided stationary Markov chain described above with
$p_0\in (0,1)$, $p_1\in[0,1)$ satisfying $p_0+p_1<1$, then the three conditions of (\ref{threeconds}) are satisfied. In particular, $\eta$ is invariant in distribution under $T$.}

\subsection{Conditioning the i.i.d.\ configuration to have bounded solitons}\label{boundedsec} In the two previous examples, it is possible to check that $\sup_{n\in\mathbb{Z}}W_n=\infty$, $\mathbf{P}$-a.s., which can be interpreted as meaning that the configurations admit solitons of an unbounded size. The motivation for the introduction of the example we present in this section came from the desire to exhibit a random initial configuration that contained solitons of a bounded size. To do this, the approach of \cite{CKSS} was to condition the i.i.d.\ configuration of Section \ref{iidsec} to not contain any solitons of size greater than $K$, or equivalently that $\sup_{n\in\mathbb{Z}}W_n\leq K$, for some fixed $K\in\mathbb{Z}_+$. Since the latter is an event of 0 probability whenever $\eta$ is Bernoulli($p$), for any $p\in (0,1)$, a limiting argument was used to make the this description rigourous. In particular, applying the classical theory of quasi-stationary distributions for Markov chains, we were able to show that the resulting configuration $\tilde{\eta}^{(K)}$ is stationary, ergodic, has path encoding with distribution supported on $\mathcal{S}^{lin}$, and moreover the three conditions at (\ref{threeconds}) hold.

To describe the construction of $\tilde{\eta}^{(K)}$ precisely, we start by defining the associated carrier process. Let $P=(P(x,y))_{x,y\in\mathbb{Z}_+}$ be the transition matrix of $W$, as defined in (\ref{Wprobs}) (where we now allow any $p\in (0,1)$).  For $K\in \mathbb{Z}_+$ fixed, let $P^{(K)}=(P^{(K)}(x,y))_{x,y\in\{0,\dots,K\}}$ be the restriction of $P$ to $\{0,\dots,K\}$. Since $P^{(K)}$ is a finite, irreducible, substochastic matrix, it admits (by the Perron-Frobenius theorem) a unique eigenvalue of largest magnitude, $\lambda_K$ say. Moreover, $\lambda_K\in(0,1)$ and has a unique (up to scaling) strictly positive eigenvector $h_K=(h_K(x))_{x\in\{0,\dots,K\}}$. Let $\tilde{P}^{(K)}=(\tilde{P}^{(K)}(x,y))_{x,y\in\{0,\dots,K\}}$ be the stochastic matrix defined by
\[\tilde{P}^{(K)}(x,y)=\frac{{P}^{(K)}(x,y)h_K(y)}{\lambda_Kh_K(x)},\qquad \forall x,y\in \{0,\dots,K\}.\]
The associated Markov chain is reversible, and has stationary probability measure given by $\tilde{\pi}^{(K)}=(\tilde{\pi}^{(K)}_x)_{x\in \{0,\dots,K\}}$, where $\tilde{\pi}^{(K)}_x = c_1 h_K(x)^2{\pi}_x$ for some constant $c_1\in(0,\infty)$ (which may depend on $K$), and $\pi$ is defined as at (\ref{pidef}). Thus the Markov chain in question admits a two-sided stationary version, and we denote this by $\tilde{W}^{(K)}=(\tilde{W}^{(K)}_n)_{n\in\mathbb{Z}}$. We view $\tilde{W}^{(K)}$ as a random carrier process, and write the associated particle configuration $\tilde{\eta}^{(K)}=(\tilde{\eta}^{(K)}_n)_{n\in\mathbb{Z}}$.

To justify the claim that $\tilde{\eta}^{(K)}$ is the i.i.d.\ configuration of Section \ref{iidsec} conditioned to have solitons of size no greater than $K$, we have the following result. (An alternative description of the limit that is valid for $p\in(0,\frac12)$ is given in \cite[Remark 3.18]{CKSS}.)

{\lem[See {\cite[Lemma 3.17]{CKSS}}] Fix $K\in \mathbb{Z}_+$. Let ${\eta}=({\eta}_n)_{n\in\mathbb{Z}}$ be an i.i.d.\ Bernoulli($p$) particle configuration for some $p\in (0,1)$. Write $\eta^{[-N,N]}=(\eta^{[-N,N]}_n)_{n\in\mathbb{Z}}$ for the truncated configuration given by $\eta^{[-N,N]}_n=\eta_n\mathbf{1}_{\{-N<n\leq N\}}$. If $W^{[-N,N]}$ is the associated carrier process, then we have the following convergence of conditioned processes:
\[W^{[-N,N]}\:\vline\: \left\{\sup_{n\in\mathbb{Z}}W^{[-N,N]}_n\leq K\right\}\rightarrow\tilde{W}^{(K)}\]
in distribution as $N\rightarrow \infty$. In particular, this implies
\[\eta^{[-N,N]}\:\vline\: \left\{\sup_{n\in\mathbb{Z}}W^{[-N,N]}_n\leq K\right\}\rightarrow\tilde{\eta}^{(K)}\]
in distribution as $N\rightarrow \infty$.}
\bigskip

As a consequence of the construction of $\tilde{\eta}^{(K)}$, it is possible to check the following result.

{\cor[See {\cite[Corollary 3.19]{CKSS}}]\label{boundedcor} If $\tilde{\eta}^{(K)}$ and $\tilde{W}^{(K)}$ are as described above, then, for any $p\in (0,1)$, $K\in\mathbb{Z}_+$, $\tilde{\eta}^{(K)}$ is a stationary, ergodic process satisfying
\[\mathbf{P}\left(\tilde{\eta}^{(K)}_0=1\right)<\frac12,\]
and also the three conditions of (\ref{threeconds}). In particular, $\tilde{\eta}^{(K)}$ is invariant in distribution under $T$.}

\subsection{Initial configurations given by periodic Gibbs measures}\label{gibbssec} To define the Gibbs measures of interest, we start by introducing functions to count the number of solitons of certain sizes within the cycle of a periodic configuration. In particular, we first fix $N\in\mathbb{N}$ to represent our cycle length, and define
\[\begin{array}{rcl}
f_0 : \{0,1\}^N& \to& \mathbb{Z}_+\\
(x_n)_{n=1}^{N}&\mapsto& \sum_{n=1}^{N}x_n,
\end{array}\]
which will count the number of particles within a cycle of a periodic configuration. Next, we introduce
\[\begin{array}{rcl}
f_1 : \{0,1\}^N& \to& \mathbb{Z}_+\\
(x_n)_{n=1}^{N}&\mapsto& \sum_{n=1}^{N}\mathbf{1}_{\{x_{n-1}=1,x_{n}=0\}},
\end{array}\]
where we suppose $x_0:=x_N$ for the purposes of the above formula; this function will count the number of solitons within a cycle of a periodic configuration. To define $f_k$ for higher values of $k$, we introduce a contraction operation on particle configurations. Specifically, given a finite length configuration $(x_n)_{n=1}^{m}$ of $0$s and $1$s, define a new configuration $H((x_n)_{n=1}^{m})$ by removing all $(1,0)$ strings from $(x_n)_{n=1}^{m}$, including the pair $(x_{m},x_1)$ if relevant. For $k\geq 2$, we then set
\[\begin{array}{rcl}
f_k : \{0,1\}^N& \to& \mathbb{Z}_+\\
(x_n)_{n=1}^{N}&\mapsto& f_1\left(H^{k-1}\left((x_n)_{n=1}^{N}\right)\right),
\end{array}\]
where the definition of $f_1$ is extended to finite strings of arbitrary length in the obvious way; this function will count the number of solitons of length at least $k$ within a cycle of a periodic configuration. That $f_k$ describe conserved quantities for the box-ball system and indeed have the desired soliton interpretation, see \cite{YYT} (cf.\ the corresponding description in the non-periodic case of \cite{Torii}, and the description of the number of solitons of certain lengths via the `hill-flattening' operator of \cite{Lev}). We subsequently define a random variable $(\eta^N_n)_{n=1}^{N}$ taking values in $\{0,1\}^N$ by setting, as initially presented at \eqref{gibbs},
\[\mathbf{P}\left((\eta^N_n)_{n=1}^{N}=(x_n)_{n=1}^{N}\right)=\frac{1}{Z}\exp\left(-\sum_{k=0}^\infty\beta_kf_k\left((x_n)_{n=1}^{N}\right)\right)\mathbf{1}_{\{f_0\left((x_n)_{n=1}^{N}\right)<N/2\}}\]
for $(x_n)_{n=1}^{N}\in\{0,1\}^N$, where $\beta_k\in\mathbb{R}\cup\{\infty\}$ for each $k\geq 0$ and $Z$ is a normalising constant. NB.\ To ensure the measure is well-defined, we adopt the convention that if $\beta_k=\infty$ and $f_k((x_n)_{n=1}^{N})=0$, then their product is zero. We then extend to $\eta^N=(\eta^N_n)_{n\in\mathbb{Z}}$ by cyclic repetition; the law of $\eta^N$ is our Gibbs measure. Clearly, the inclusion of the term $\mathbf{1}_{\{f_0((x_n)_{n=1}^{N})<N/2\}}$ yields that the distribution of the path encoding of the configuration $\eta^N$ is supported on $\mathcal{S}^{lin}$.

We next check the spatial stationarity and distributional symmetry of $\eta^N$, and the distributional symmetry of the associated carrier process $W^N$.

\begin{lem}\label{etalem} The law of the periodic configuration $\eta^N$, as described by the Gibbs measure at \eqref{gibbs}, is stationary under spatial shifts. Moreover, $\overleftarrow{\eta}^N\buildrel{d}\over{=}\eta^N$.
\end{lem}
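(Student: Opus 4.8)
\emph{Proof proposal.} The plan is to reduce both claims to a symmetry property of the counting functions. Observe that the right-hand side of \eqref{gibbs} depends on the length-$N$ word $(x_n)_{n=1}^{N}$ only through the values $\bigl(f_k((x_n)_{n=1}^{N})\bigr)_{k\geq 0}$ (the exponential weight, the indicator, and the normalisation $Z$ are all determined by these alone, with the stated convention when $\beta_k=\infty$). Hence it suffices to show that each $f_k$ is invariant under the two natural symmetries of the cycle $\{1,\dots,N\}$, namely the cyclic shift $\theta\colon (x_1,\dots,x_N)\mapsto (x_2,\dots,x_N,x_1)$ and the reversal $r\colon (x_1,\dots,x_N)\mapsto (x_N,\dots,x_1)$. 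Indeed, $\theta$-invariance of the law of $(\eta^N_n)_{n=1}^{N}$ is exactly the statement that the cyclic extension $\eta^N$ is stationary under spatial shifts; and, since periodicity gives $(\overleftarrow{\eta}^N_n)_{n=1}^{N}=(\eta^N_N,\eta^N_{N-1},\dots,\eta^N_1)=r\bigl((\eta^N_n)_{n=1}^{N}\bigr)$, and $\overleftarrow{\eta}^N$ is again a cyclic extension of this reversed word, the $r$-invariance of the law of $(\eta^N_n)_{n=1}^{N}$ yields $\overleftarrow{\eta}^N\buildrel{d}\over{=}\eta^N$.

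First I would dispose of the low-index cases. That $f_0\circ\theta=f_0$ and $f_0\circ r=f_0$ is immediate, since $f_0$ is just the number of $1$s in the word. For $f_1$, writing it as the sum over $n\in\{1,\dots,N\}$ of the local quantity $\mathbf{1}_{\{x_{n-1}=1,\,x_n=0\}}$ (with $x_0:=x_N$) makes $\theta$-invariance transparent; for $r$-invariance one uses that on a cyclic binary word the number of $(1,0)$-transitions equals the number of $(0,1)$-transitions (both equal the number of maximal runs of $1$s, which is well defined here since the constraint $f_0<N/2$ rules out the all-$1$s word), together with the fact that $r$ interchanges these two counts.

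For $k\geq 2$ the key step is to check that the contraction operator $H$ is equivariant with respect to both symmetries, in the sense that $H\circ\theta=\theta'\circ H$ and $H\circ r=\sigma\circ H$ for a suitable cyclic shift $\theta'$, respectively a suitable composition $\sigma$ of a reversal with a cyclic shift, acting on words of the appropriate shorter length. The underlying point is that the set of positions deleted by $H$ is determined by the cyclic word itself, so relabelling its base point (respectively reading it backwards) merely relabels (respectively reverses, up to a base-point shift) the contracted word. Granting this, an induction on $k$ using $f_k=f_1\circ H^{k-1}$ together with the $k=1$ case propagates the invariance to all $f_k$, the $\theta$- and $r$-invariance of $f_1$ being exactly what is needed to absorb the spurious cyclic shifts introduced at each stage.

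The main obstacle is the equivariance of $H$ under reversal: this requires making precise the cyclic ``remove all $(1,0)$-strings'' prescription and verifying that the induced pairing of $1$s with $0$s behaves correctly when the word is read backwards. Rather than unwinding the definition by hand, I would instead invoke the soliton interpretation of the $f_k$ established in \cite{YYT} (compare \cite{Torii, Lev}): $f_k\bigl((x_n)_{n=1}^{N}\bigr)$ is the number of solitons of size at least $k$ in the cyclic configuration, equivalently the number of rows of length at least $k$ in the associated Young-diagram-valued soliton data, and this multiset of soliton sizes plainly depends neither on the choice of base point nor on the orientation in which the cycle is traversed. Either route establishes the required invariance of every $f_k$ under $\theta$ and $r$, and hence the lemma.
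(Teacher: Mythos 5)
Your proposal is correct and follows essentially the same route as the paper: both arguments reduce the lemma to the invariance of every $f_k$ under the cyclic shift and the reversal of the length-$N$ word, treat $f_1$ via the equality of the $(1,0)$- and $(0,1)$-transition counts on a cycle, and handle $k\geq 2$ by showing that $H$ commutes with reversal up to a cyclic shift, so that $f_k(\overleftarrow{x})=f_1(\overleftarrow{H^{k-1}(x)})=f_k(x)$. The one step you defer --- the reversal-equivariance of $H$ --- is settled in the paper by the concrete observation (also from \cite{YYT}, and in \cite{Lev} for the non-periodic case) that $H$ shortens every maximal run of $0$s and of $1$s by one, hence agrees up to a periodic shift with the $(0,1)$-removal operation, which is a cleaner justification than appealing wholesale to the soliton interpretation.
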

\begin{proof} For $x=(x_n)_{n=1}^{N}\in\{0,1\}^N$, it is straightforward to check from the definitions of the relevant functions that
\begin{equation}\label{per}
f_k(x)=f_k(\theta_{Per}x),\qquad \forall k\geq 0,
\end{equation}
where $\theta_{Per}$ is the periodic shift operator given by $\theta_{Per}x:=(x_2,\dots,x_N,x_1)$. Hence we obtain from \eqref{gibbs} that
\[\mathbf{P}\left(\theta_{Per}\left((\eta^N_n)_{n=1}^{N}\right)=x\right)=\mathbf{P}\left((\eta^N_n)_{n=1}^{N}=x\right),\qquad \forall x\in\{0,1\}^N.\]
It readily follows that $\theta\eta^N\buildrel{d}\over{=}\eta^N$, where $\theta$ is the left-shift on doubly infinite sequences, i.e.\ $\theta((x_n)_{n\in\mathbb{Z}})=(x_{n+1})_{n\in\mathbb{Z}}$. This establishes the first claim of the lemma.

We now check the second claim. For $x=(x_n)_{n=1}^{N}\in\{0,1\}^N$, write $\overleftarrow{x}$ for the reversed sequence $(x_{N+1-n})_{n=1}^N$. We clearly have that
\[f_0\left(x\right)=f_0\left(\overleftarrow{x}\right).\]
Moreover, recall that $f_1$ counts the number of $(1,0)$ strings in $x$, including the $(x_N,x_1)$ pair. The latter periodicity readily implies that this is equal to the number of $(0,1)$ strings in $x$ (cf.\ \cite[Lemma 2.1]{YYT}). Hence
\begin{equation}\label{f1prop}
f_1\left(x\right)=f_1\left(\overleftarrow{x}\right).
\end{equation}
Next, further recall that the configuration $H(x)$ is obtained by removing all $(1,0)$ strings from $x$, including the pair $(x_{N},x_1)$ if relevant. Since this operation simply reduces the lengths of all the strings of consecutive $0$ strings of consecutive $1$s by one, it is the same (up to a periodic shift) as the $(0,1)$-removal operation; this observation was made in \cite{YYT} (below Lemma 2.1 of that article), and also in the proof of \cite[Lemma 2.1]{Lev} in the non-periodic case. In particular, we have that
\[H(x)=\theta_{Per}^{l_x}\overleftarrow{H\left(\overleftarrow{x}\right)}\]
for some integer $l_x$ (where the definition of the periodic shift operator is extended to finite sequences of arbitrary length in the obvious way). Hence, applying this observation in conjunction with \eqref{per} and \eqref{f1prop}, we find that
\[f_k\left(\overleftarrow{x}\right)=f_1\left(H^{k-1}\left(\overleftarrow{x}\right)\right)=
f_1\left(\overleftarrow{H^{k-1}\left({x}\right)}\right)=f_1\left({H^{k-1}\left({x}\right)}\right)=f_k(x).\]
As a consequence of these observations, we thus obtain
\[\mathbf{P}\left((\eta^N_n)_{n=1}^{N}=x\right)=\mathbf{P}\left((\eta^N_n)_{n=1}^{N}=\overleftarrow{x}\right),\qquad \forall x\in\{0,1\}^N,\]
which implies $\overleftarrow{\eta}^N\buildrel{d}\over{=}\eta^N$, as desired.
\end{proof}

\begin{lem}\label{wlem} If $\eta^N$ is the periodic configuration with law given by the Gibbs measure at \eqref{gibbs}, then $\bar{W}^N\buildrel{d}\over{=}W^N$.
\end{lem}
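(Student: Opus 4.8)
The plan is to identify a deterministic transformation of configurations whose associated carrier process is exactly the spatial reversal $\bar W$ of the carrier process of the original configuration, and then to check that this transformation preserves the law \eqref{gibbs}. The transformation I would use is $\eta\mapsto\overleftarrow{T\eta}$, the spatial reversal of the one-step box-ball evolution. Note first that $T\eta^N$ has the same particle density as $\eta^N$, so $\overleftarrow{T\eta^N}$ again has path encoding in $\mathcal{S}^{lin}$, and hence a well-defined (and unique) carrier process; recall also that, for path encodings in $\mathcal{S}^{lin}$, the carrier process is a measurable function of the configuration.

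\emph{Step 1: a deterministic identity.} I claim that, for any configuration $\eta$ whose path encoding lies in $\mathcal{S}^{lin}$, the reversed carrier process $\bar W$ coincides with the carrier process of the configuration $\overleftarrow{T\eta}$. Writing $S$, $M_n=\sup_{m\le n}S_m$ and $W_n=M_n-S_n$ for the quantities attached to $\eta$, the path encoding of $\overleftarrow{T\eta}$ is $n\mapsto -(TS)_{-n}=-2M_{-n}+S_{-n}+2M_0$; substituting this into the definition of the associated carrier process and cancelling, the claimed identity reduces to the statement that
\[\inf_{l\ge k}\big(2M_l-S_l\big)=M_k,\qquad\forall k\in\mathbb{Z}.\]
Here ``$\ge$'' is immediate from $2M_l-S_l=M_l+W_l\ge M_l\ge M_k$ for $l\ge k$, while ``$\le$'' holds because the strictly positive drift of $S\in\mathcal{S}^{lin}$ forces $S$ to return to its past maximum: at the smallest $l\ge k$ with $S_l\ge M_k$ one has $S_l=M_k$ and $M_l=M_k$, so $2M_l-S_l=M_k$. (This identity is the deterministic content behind the implication in Theorem \ref{mrd} that the first and third conditions of \eqref{threeconds} imply the second, and could instead be quoted from \cite{CKSS}.)

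\emph{Step 2: the transformed configuration is again Gibbs.} Since reversal of a periodic configuration amounts to reversal of its fundamental cycle, the calculation in the proof of Lemma \ref{etalem} shows that the law \eqref{gibbs} is preserved by reversal. It is also preserved by $T$: indeed $T$ restricts to a bijection of the set of cyclic configurations in $\{0,1\}^N$ with fewer than $N/2$ particles onto itself (the invertibility of the periodic box-ball system below half-filling, which is also a consequence of $\mathcal{S}^{lin}\subseteq\mathcal{S}^{rev}$ together with \cite[Theorem 1.1]{CKSS}), and each $f_k$, $k\ge0$, is conserved by the periodic dynamics (see \cite{YYT}), so the density in \eqref{gibbs} is unchanged along the orbit of $T$. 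Hence the composition ``reverse $\circ$ $T$'' preserves the law \eqref{gibbs}, i.e.\ $\overleftarrow{T\eta^N}\buildrel{d}\over{=}\eta^N$.

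Combining the two steps, $\bar W^N$, being the carrier process of $\overleftarrow{T\eta^N}$ and the carrier being a measurable function of the configuration, has the same distribution as the carrier process of $\eta^N$; that is, $\bar W^N\buildrel{d}\over{=}W^N$. The main obstacle is Step 1: tracking the interplay of spatial reversal, Pitman's transformation and the carrier through the above computation — equivalently, recognising that $\bar W$ is the carrier of $\overleftarrow{T\eta}$ — is the one genuinely non-formal point, though it is already implicit in \cite{CKSS}. Step 2 is routine, the essential input being the conservation of the soliton-counting functionals $f_k$ under $T$, which is exactly the property that makes \eqref{gibbs} a natural family of laws to consider.
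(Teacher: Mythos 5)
Your proof is correct, but it takes a genuinely different route from the paper's. The paper works entirely at the level of the carrier's periodic increment sequence $\Delta(w)$: it identifies the configuration $\bar{x}$ whose carrier is $\bar{w}$, and then proves $f_k(x)=f_k(\bar{x})$ for every $k$ by a direct combinatorial induction, using the contraction $H$ on configurations in tandem with the analogous operation $H_W$ removing $(1,-1)$ substrings of $\Delta(w)$. That argument is self-contained and, as the remark following Corollary \ref{gibbsinv} points out, re-derives the conservation law $f_k\circ T=f_k$ as a byproduct. You instead factor the map $w\mapsto\bar{w}$ at the level of configurations, recognising $\bar{W}$ as the carrier of $\overleftarrow{T\eta}$ via the path-space identity $\inf_{l\geq k}(2M_l-S_l)=M_k$ (valid for positive-drift paths, and indeed equivalent to the statement $\inf_{l\geq k}(TS)_l=M_k-2M_0$ of \cite[Theorem 2.14]{CKSS}), and then conclude from reversal-invariance of the Gibbs law (Lemma \ref{etalem}) together with $T$-invariance, the latter imported from the conservation law of \cite{YYT} plus bijectivity of the periodic dynamics below half-filling. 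This is exactly the observation $Tx=\overleftarrow{\bar{x}}$ that the paper records in the remark after Corollary \ref{gibbsinv}, run in the opposite direction: you assume the conserved quantities to deduce the lemma, where the paper proves the lemma combinatorially and deduces the conserved quantities. Your version is conceptually cleaner and avoids the $H_W$ induction, at the price of an external input; you are right to cite \cite{YYT} rather than Corollary \ref{gibbsinv} itself, which would have been circular since that corollary is derived from this lemma. Both steps of your argument check out, including the slightly delicate indexing in the identification of the carrier of $\overleftarrow{T\eta}$ with $\bar{W}$.
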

\begin{proof} For a sequence $w:\{1,\dots,N\}\rightarrow \mathbb{Z}_+$, define the associated periodic increment process $\Delta(w)=(\Delta(w)_n)_{n=1}^N$ by setting
\[\Delta(w)_n=w_n-w_{n-1},\qquad \forall n\in\{1,\dots,N\},\]
where we define $w_0:=w_N$. Moreover, let $\mathcal{W}$ be the set of $w:\{1,\dots,N\}\rightarrow \mathbb{Z}_+$ such that $\Delta(w)\in\{-1,0,1\}^N$, $\Delta(w)_n=0$ if and only if $w_n=w_{n-1}=0$, and $\Delta(w)_n=0$ for at least one $n\in\{1,\dots,N\}$. Note that, on this set, $w$ is uniquely determined by $\Delta(w)$.

Now, since the configuration is $N$-periodic and $S_N>0$, $W^N$ is also $N$-periodic and moreover $(W_n)_{n=1}^N$ takes values in $\mathcal{W}$, $\mathbf{P}$-a.s. Since $\Delta(W^N)_n=1$ if and only if $\eta^N_n=1$, it follows that, for all $w\in\mathcal{W}$,
\[\mathbf{P}\left((W^N_n)_{n=1}^N=w\right)=\mathbf{P}\left((\Delta(W^N)_n)_{n=1}^N=\Delta(w)\right)=\mathbf{P}\left((\eta^N_n)_{n=1}^N=x\right),\]
where $x=(x_n)_{n=1}^N$ is defined by setting $x_n:=\mathbf{1}_{\{\Delta(w)_n=1\}}$. Moreover, using the notation $\bar{w}=(w_{N-1},w_{N-2},\dots,w_1,w_N)$ (which is also an element of $\mathcal{W}$), we have that
\[\mathbf{P}\left((\bar{W}^N_n)_{n=1}^N=w\right)=\mathbf{P}\left(({W}^N_n)_{n=1}^N=\bar{w}\right)=
\mathbf{P}\left((\Delta(W^N)_n)_{n=1}^N=\Delta(\bar{w})\right).\]
A simple calculation yields that $\Delta(\bar{w})=-\overleftarrow{\Delta(w)}$, and so we find that
\[\mathbf{P}\left((\bar{W}^N_n)_{n=1}^N=w\right)=\mathbf{P}\left((\eta^N_n)_{n=1}^N=\bar{x}\right),\]
where $\bar{x}=(\bar{x}_n)_{n=1}^N$ is defined by setting $\bar{x}_n:=\mathbf{1}_{\{\overleftarrow{\Delta(w)}_n=-1\}}$. In particular, the result will follow from the above observations and \eqref{gibbs} if we can show that $f_k(x)=f_k(\bar{x})$ for each $k\geq 0$.

Clearly, periodicity implies that the number of up-jumps of $w$ equals the number of down-jumps, and so
\[f_0(\bar{x})=\sum_{n=1}^N\mathbf{1}_{\left\{\overleftarrow{\Delta(w)}_n=-1\right\}}=
\sum_{n=1}^N\mathbf{1}_{\left\{{\Delta(w)}_n=-1\right\}}=\sum_{n=1}^N\mathbf{1}_{\left\{{\Delta(w)}_n=1\right\}}=f_0(x).\]
Furthermore, since $\Delta(w)$ can not contain the substrings $(0,-1)$ or $(1,0)$,
\begin{eqnarray*}
f_1(\bar{x})&=&\sum_{n=1}^N\mathbf{1}_{\left\{\overleftarrow{\Delta(w)}_{n-1}=-1,\:\overleftarrow{\Delta(w)}_n\in\{0,1\}\right\}}\\
&=&\sum_{n=1}^N\mathbf{1}_{\left\{{\Delta(w)}_{n-1}\in\{0,1\},\:{\Delta(w)}_n=-1\right\}}\\
&=&\sum_{n=1}^N\mathbf{1}_{\left\{{\Delta(w)}_{n-1}=1,\:{\Delta(w)}_n\in\{-1,0\}\right\}}\\
&=&f_1(x)
\end{eqnarray*}
Finally, observe that the $(1,-1)$ substrings of $\Delta(w)$ (including the one at $(w_N,w_1)$ if relevant) precisely correspond to the $(1,0)$ substrings of $x$. Moreover, if we suppose $H_W$ is the operation which removes these substrings, then it is an easy exercise to check that $H_W(\Delta(w))$ is the element of $\mathcal{W}$ representing the periodic increment process of the carrier associated with the configuration given by $H(x)$. We can iterate this argument to further obtain that $H_W^{k-1}(\Delta(w))$ is the element of $\mathcal{W}$ representing the periodic increment process of the carrier associated with the configuration given by $H^{k-1}(x)$ for any $k\geq 2$. Hence we can write
\begin{equation}\label{fk1}
f_k({x})=f_1\left(H^{k-1}({x})\right)=f_1\left(\left(\mathbf{1}_{\{H^{k-1}_W(\Delta({w}))_n=1\}}\right)_{n=1}^l\right),
\end{equation}
where $l$ is the length of the sequence $H_W^{k-1}(\Delta(w))$. Applying the same logic to $\bar{w}$, we similarly have that $H_W^{k-1}(\Delta(\bar{w}))$ is the element of $\mathcal{W}$ representing the periodic increment process of the carrier associated with the configuration given by $H^{k-1}(\bar{x})$ for any $k\geq 2$, and moreover the definition of $H_W$ readily implies that
\[H_W^{k-1}(\Delta(\bar{w}))=H_W^{k-1}\left(-\overleftarrow{\Delta({w})}\right)=-\overleftarrow{H_W^{k-1}(\Delta({w}))}.\]
Hence
\begin{equation}\label{fk2}
f_k(\bar{x})=f_1\left(\left(\mathbf{1}_{\{-\overleftarrow{H^{k-1}_W(\Delta({w}))}_n=1\}}\right)_{n=1}^l\right),
\end{equation}
and the argument for $f_1$ above shows the right-hand side of \eqref{fk1} and \eqref{fk2} are equal, which completes the proof.
\end{proof}

As a consequence of the previous two lemmas and Theorem \ref{mrd}, we readily obtain the main result of this section.

\begin{cor}\label{gibbsinv} If $\eta^N$ is the periodic configuration with law given by the Gibbs measure at \eqref{gibbs}, then the three conditions of (\ref{threeconds}) are satisfied. In particular, $\eta^N$ is invariant in distribution under $T$.
\end{cor}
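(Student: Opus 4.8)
\emph{Proof proposal.} The plan is simply to combine the two preceding lemmas with Theorem \ref{mrd}. The first thing I would check is that the hypothesis of Theorem \ref{mrd} is satisfied, namely that the law of the path encoding $S^N$ of $\eta^N$ is supported on $\mathcal{S}^{rev}$. This is immediate from the construction: the indicator $\mathbf{1}_{\{f_0((x_n)_{n=1}^{N})<N/2\}}$ appearing in \eqref{gibbs} forces each period to contain strictly fewer than $N/2$ particles, so the path encoding, being the $N$-periodic extension of a finite path with strictly positive net increment over one period, satisfies $\lim_{|n|\to\infty}S_n/n=c$ for some $c>0$; that is, $S^N\in\mathcal{S}^{lin}$, $\mathbf{P}$-a.s. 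Since $\mathcal{S}^{lin}\subseteq\mathcal{S}^{inv}\subseteq\mathcal{S}^{rev}$, as noted below \eqref{slin}, the support condition holds.

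With that in hand, Lemma \ref{etalem} supplies the first condition in \eqref{threeconds}, $\overleftarrow{\eta}^N\buildrel{d}\over{=}\eta^N$, and Lemma \ref{wlem} supplies the second, $\bar{W}^N\buildrel{d}\over{=}W^N$. Theorem \ref{mrd} then yields the third condition, $T\eta^N\buildrel{d}\over{=}\eta^N$ (and, as a bonus, confirms that the law of $S^N$ is supported on $\mathcal{S}^{inv}$, which we already knew). Hence all three conditions of \eqref{threeconds} hold and, in particular, $\eta^N$ is invariant in distribution under $T$.

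I do not anticipate any real obstacle here: the substantive work has already been carried out in Lemmas \ref{etalem} and \ref{wlem} (verifying the spatial symmetry of the configuration and of the carrier process, respectively), and in the cited Theorem \ref{mrd} from \cite{CKSS}. The only point requiring a moment's attention is confirming that the periodic setting genuinely falls within the $\mathcal{S}^{rev}$/$T$ framework of \cite{CKSS}, and this is exactly what the linear-drift property forced by the indicator in \eqref{gibbs} guarantees, placing $\eta^N$ among the configurations for which $T$ and all its iterates are well-defined and reversible.
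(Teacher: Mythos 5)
Your proposal is correct and follows exactly the paper's route: the support on $\mathcal{S}^{lin}\subseteq\mathcal{S}^{rev}$ is guaranteed by the indicator in \eqref{gibbs}, and the conclusion is then immediate from Lemmas \ref{etalem} and \ref{wlem} combined with Theorem \ref{mrd}. (The paper additionally sketches, in a remark, a direct alternative proof via the conservation of the $f_k$ under $T$, but that is supplementary to the argument you give.)
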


\begin{rem} We now discuss an alternative, direct proof of Corollary \ref{gibbsinv}. Let $x\in\{0,1\}^N$ be such that $f_0(x)<N/2$, and $Tx=((Tx)_n)_{n=1}^N$ be the image of $x$ under the action of the periodic BBS. The definitions readily yield that if $w$ is the carrier path associated with $x$, then
\[Tx=\left(\mathbf{1}_{\Delta(w)_n=-1}\right)_{n=1}^N=\overleftarrow{\bar{x}},\]
where we are using the notation of the proofs of Lemmas \ref{etalem} and \ref{wlem}. Moreover, the arguments applied in these proofs imply that
\begin{equation}\label{fkpres}
f_k(Tx)=f_k(x),\qquad \forall k\geq 0.
\end{equation}
It clearly follows that the Gibbs measure at (\ref{gibbs}) is invariant under $T$, and we arrive at Corollary \ref{gibbsinv}. We note that the identity at (\ref{fkpres}) was previously proved as \cite[Proposition 2.1]{YYT}, see also \cite{Torii} for a proof in the non-periodic case.
\end{rem}

To conclude this section, we relate the Gibbs measures of this section with the i.i.d., Markov and bounded soliton configurations of Subsections \ref{iidsec}, \ref{markovsec} and \ref{boundedsec}, respectively. In particular, in the following examples we introduce three specific parameter choices for the Gibbs measures, and then show in Proposition \ref{infvollim} below that the aforementioned configurations can be obtained as infinite volume limits of these. Moreover, in Subsections \ref{perbmsec}, \ref{perzzsec} and \ref{contbounded}, we present scaling limits for certain sequences of periodic configurations based on these examples.

\begin{exmp}[Periodic i.i.d. initial configuration]\label{periidex} Similarly to \cite[Remark 1.12]{CKSS}, let $p\in (0,1)$, and consider the parameter choice
\[\beta_0=\log\left(\frac{1-p}{p}\right),\qquad \beta_k=0,\:\forall k \ge 1.\]
(Figure \ref{iidfig} shows a typical realisation of a configuration chosen according the associated Gibbs measure, and its subsequent evolution.) It is then an elementary exercise to check that
\begin{equation}\label{iidexp}
\mathbf{P}\left((\eta^N_n)_{n=1}^{N}=(x_n)_{n=1}^{N}\right)=\mathbf{P}\left((\eta_n)_{n=1}^{N}=(x_n)_{n=1}^{N}\:\vline\:S_N>0\right),
\end{equation}
where $\eta$ is an i.i.d.\ sequence of Bernoulli($p$) random variables. Note that the restriction $p<\frac12$ of Subsection \ref{iidsec} is equivalent to taking $\beta_0>0$, and in this regime we will check that $\eta^N$ converges in distribution to $\eta$ as $N\rightarrow\infty$ (see Proposition \ref{infvollim}(a)). We also describe the infinite volume limit in the case $\beta_0\leq 0$ (see Proposition \ref{infvollimhighdens}).
\end{exmp}

\begin{figure}[t]
\begin{center}
\includegraphics[width=0.9\textwidth,height=0.3\textwidth]{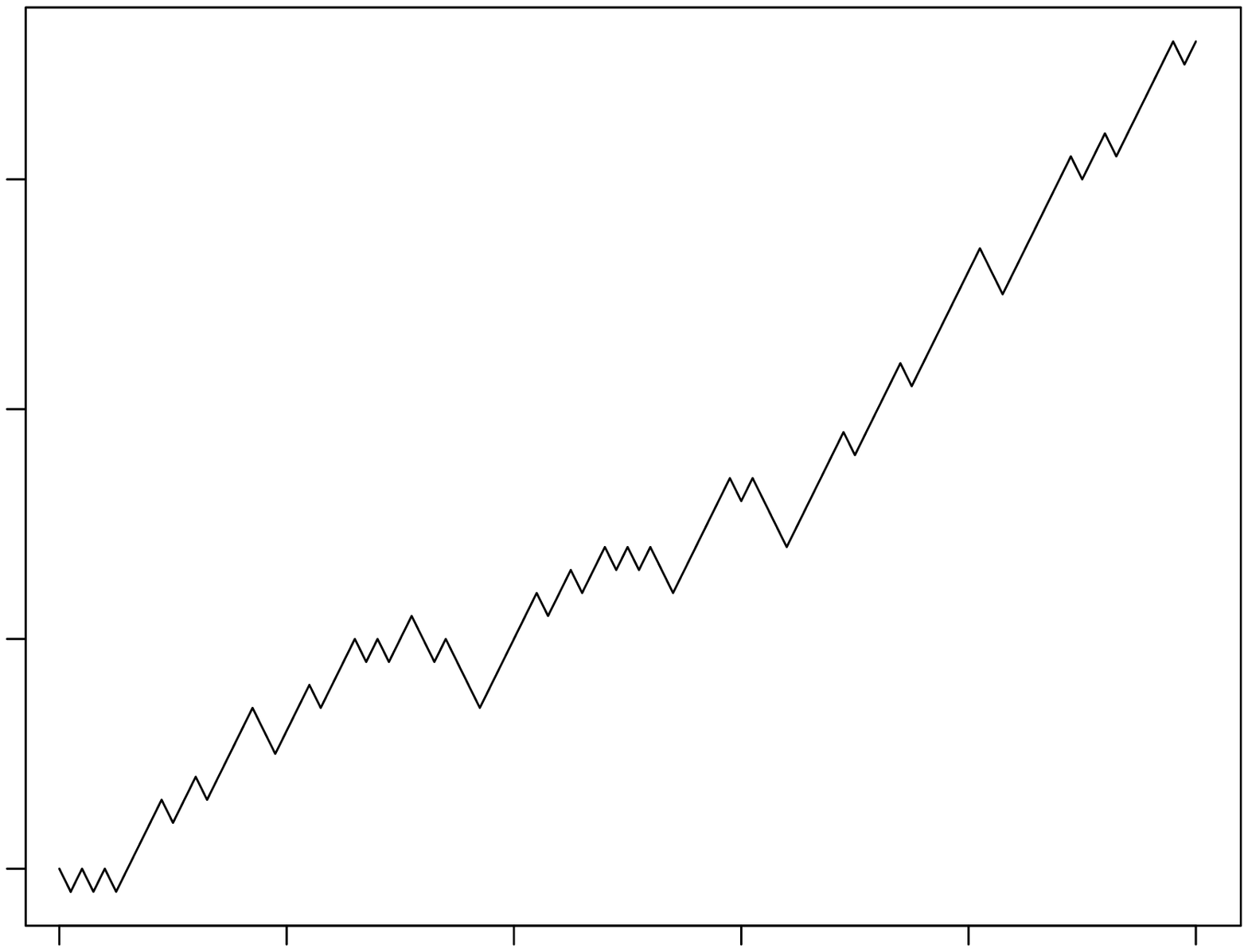}
\rput(-11.6,0.4){$0$}
\rput(-9.53,0.4){$20$}
\rput(-7.46,0.4){$40$}
\rput(-5.39,0.4){$60$}
\rput(-3.32,0.4){$80$}
\rput(-1.25,0.4){$100$}
\rput(-12.3,.8167){$0$}
\rput(-12.33,1.6666){$10$}
\rput(-12.33,2.4833){$20$}
\rput(-12.33,3.3){$30$}

\vspace{-5pt}\BB\BA\BB\BA\BB\BA\BA\BA\BA\BB\BA\BA\BB\BA\BA\BA\BA\BB\BB\BA\BA\BA\BB\BA\BA\BA\BB\BA\BB\BA\BA\BB\BB\BA\BB\BB\BB\BA\BA\BA\BA\BA\BB\BA\BA\BB\BA\BA\BB\BA\BB\BA\BB\BB\BA\BA\BA\BA\BA\BB\BA\BB\BB\BB\BA\BA\BA\BA\BA\BB\BA\BA\BA\BA\BB\BA\BA\BA\BA\BA\BA\BB\BB\BA\BA\BA\BA\BA\BA\BB\BA\BA\BB\BA\BA\BA\BA\BA\BB\BA\\
\vspace{-5pt}\BA\BB\BA\BB\BA\BB\BA\BA\BA\BA\BB\BA\BA\BB\BA\BA\BA\BA\BA\BB\BB\BA\BA\BB\BA\BA\BA\BB\BA\BB\BA\BA\BA\BB\BA\BA\BA\BB\BB\BB\BB\BA\BA\BB\BA\BA\BB\BA\BA\BB\BA\BB\BA\BA\BB\BB\BA\BA\BA\BA\BB\BA\BA\BA\BB\BB\BB\BA\BA\BA\BB\BA\BA\BA\BA\BB\BA\BA\BA\BA\BA\BA\BA\BB\BB\BA\BA\BA\BA\BA\BB\BA\BA\BB\BA\BA\BA\BA\BA\BB\\
\vspace{-5pt}\BB\BA\BB\BA\BB\BA\BB\BA\BA\BA\BA\BB\BA\BA\BB\BA\BA\BA\BA\BA\BA\BB\BB\BA\BB\BA\BA\BA\BB\BA\BB\BA\BA\BA\BB\BA\BA\BA\BA\BA\BA\BB\BB\BA\BB\BB\BA\BB\BB\BA\BB\BA\BB\BA\BA\BA\BB\BB\BA\BA\BA\BB\BA\BA\BA\BA\BA\BB\BB\BB\BA\BB\BA\BA\BA\BA\BB\BA\BA\BA\BA\BA\BA\BA\BA\BB\BB\BA\BA\BA\BA\BB\BA\BA\BB\BA\BA\BA\BA\BA\\
\vspace{-5pt}\BA\BB\BA\BB\BA\BB\BA\BB\BA\BA\BA\BA\BB\BA\BA\BB\BA\BA\BA\BA\BA\BA\BA\BB\BA\BB\BB\BA\BA\BB\BA\BB\BA\BA\BA\BB\BA\BA\BA\BA\BA\BA\BA\BB\BA\BA\BB\BA\BA\BB\BA\BB\BA\BB\BB\BB\BA\BA\BB\BB\BB\BA\BB\BA\BA\BA\BA\BA\BA\BA\BB\BA\BB\BB\BB\BA\BA\BB\BA\BA\BA\BA\BA\BA\BA\BA\BA\BB\BB\BA\BA\BA\BB\BA\BA\BB\BA\BA\BA\BA\\
\vspace{-5pt}\BA\BA\BB\BA\BB\BA\BB\BA\BB\BA\BA\BA\BA\BB\BA\BA\BB\BA\BA\BA\BA\BA\BA\BA\BB\BA\BA\BB\BB\BA\BB\BA\BB\BA\BA\BA\BB\BA\BA\BA\BA\BA\BA\BA\BB\BA\BA\BB\BA\BA\BB\BA\BB\BA\BA\BA\BB\BB\BA\BA\BA\BB\BA\BB\BB\BB\BB\BA\BA\BA\BA\BB\BA\BA\BA\BB\BB\BA\BB\BB\BA\BA\BA\BA\BA\BA\BA\BA\BA\BB\BB\BA\BA\BB\BA\BA\BB\BA\BA\BA\\
\vspace{-5pt}\BA\BA\BA\BB\BA\BB\BA\BB\BA\BB\BA\BA\BA\BA\BB\BA\BA\BB\BA\BA\BA\BA\BA\BA\BA\BB\BA\BA\BA\BB\BA\BB\BA\BB\BB\BA\BA\BB\BA\BA\BA\BA\BA\BA\BA\BB\BA\BA\BB\BA\BA\BB\BA\BB\BA\BA\BA\BA\BB\BB\BA\BA\BB\BA\BA\BA\BA\BB\BB\BB\BB\BA\BB\BA\BA\BA\BA\BB\BA\BA\BB\BB\BB\BA\BA\BA\BA\BA\BA\BA\BA\BB\BB\BA\BB\BA\BA\BB\BA\BA\\
\vspace{-5pt}\BA\BA\BA\BA\BB\BA\BB\BA\BB\BA\BB\BA\BA\BA\BA\BB\BA\BA\BB\BA\BA\BA\BA\BA\BA\BA\BB\BA\BA\BA\BB\BA\BB\BA\BA\BB\BB\BA\BB\BA\BA\BA\BA\BA\BA\BA\BB\BA\BA\BB\BA\BA\BB\BA\BB\BA\BA\BA\BA\BA\BB\BB\BA\BB\BA\BA\BA\BA\BA\BA\BA\BB\BA\BB\BB\BB\BB\BA\BB\BA\BA\BA\BA\BB\BB\BB\BA\BA\BA\BA\BA\BA\BA\BB\BA\BB\BB\BA\BB\BA\\
\vspace{-5pt}\BB\BA\BA\BA\BA\BB\BA\BB\BA\BB\BA\BB\BA\BA\BA\BA\BB\BA\BA\BB\BA\BA\BA\BA\BA\BA\BA\BB\BA\BA\BA\BB\BA\BB\BA\BA\BA\BB\BA\BB\BB\BA\BA\BA\BA\BA\BA\BB\BA\BA\BB\BA\BA\BB\BA\BB\BA\BA\BA\BA\BA\BA\BB\BA\BB\BB\BA\BA\BA\BA\BA\BA\BB\BA\BA\BA\BA\BB\BA\BB\BB\BB\BB\BA\BA\BA\BB\BB\BB\BA\BA\BA\BA\BA\BB\BA\BA\BB\BA\BB\\
\vspace{-5pt}\BA\BB\BB\BA\BA\BA\BB\BA\BB\BA\BB\BA\BB\BA\BA\BA\BA\BB\BA\BA\BB\BA\BA\BA\BA\BA\BA\BA\BB\BA\BA\BA\BB\BA\BB\BA\BA\BA\BB\BA\BA\BB\BB\BA\BA\BA\BA\BA\BB\BA\BA\BB\BA\BA\BB\BA\BB\BA\BA\BA\BA\BA\BA\BB\BA\BA\BB\BB\BA\BA\BA\BA\BA\BB\BA\BA\BA\BA\BB\BA\BA\BA\BA\BB\BB\BB\BA\BA\BA\BB\BB\BB\BB\BA\BA\BB\BA\BA\BB\BA\\
\vspace{-5pt}\BB\BA\BA\BB\BB\BA\BA\BB\BA\BB\BA\BB\BA\BB\BA\BA\BA\BA\BB\BA\BA\BB\BA\BA\BA\BA\BA\BA\BA\BB\BA\BA\BA\BB\BA\BB\BA\BA\BA\BB\BA\BA\BA\BB\BB\BA\BA\BA\BA\BB\BA\BA\BB\BA\BA\BB\BA\BB\BA\BA\BA\BA\BA\BA\BB\BA\BA\BA\BB\BB\BA\BA\BA\BA\BB\BA\BA\BA\BA\BB\BA\BA\BA\BA\BA\BA\BB\BB\BB\BA\BA\BA\BA\BB\BB\BA\BB\BB\BA\BB\\
\vspace{-5pt}\BA\BB\BB\BA\BA\BB\BB\BA\BB\BA\BB\BA\BB\BA\BB\BB\BB\BA\BA\BB\BA\BA\BB\BA\BA\BA\BA\BA\BA\BA\BB\BA\BA\BA\BB\BA\BB\BA\BA\BA\BB\BA\BA\BA\BA\BB\BB\BA\BA\BA\BB\BA\BA\BB\BA\BA\BB\BA\BB\BA\BA\BA\BA\BA\BA\BB\BA\BA\BA\BA\BB\BB\BA\BA\BA\BB\BA\BA\BA\BA\BB\BA\BA\BA\BA\BA\BA\BA\BA\BB\BB\BB\BA\BA\BA\BB\BA\BA\BB\BA\\
\vspace{-5pt}\BA\BA\BA\BB\BB\BA\BA\BB\BA\BB\BA\BB\BA\BB\BA\BA\BA\BB\BB\BA\BB\BB\BA\BB\BB\BA\BA\BA\BA\BA\BA\BB\BA\BA\BA\BB\BA\BB\BA\BA\BA\BB\BA\BA\BA\BA\BA\BB\BB\BA\BA\BB\BA\BA\BB\BA\BA\BB\BA\BB\BA\BA\BA\BA\BA\BA\BB\BA\BA\BA\BA\BA\BB\BB\BA\BA\BB\BA\BA\BA\BA\BB\BA\BA\BA\BA\BA\BA\BA\BA\BA\BA\BB\BB\BB\BA\BB\BA\BA\BB\\
\vspace{-5pt}\BB\BB\BA\BA\BA\BB\BB\BA\BB\BA\BB\BA\BB\BA\BB\BA\BA\BA\BA\BB\BA\BA\BB\BA\BA\BB\BB\BB\BB\BA\BA\BA\BB\BA\BA\BA\BB\BA\BB\BA\BA\BA\BB\BA\BA\BA\BA\BA\BA\BB\BB\BA\BB\BA\BA\BB\BA\BA\BB\BA\BB\BA\BA\BA\BA\BA\BA\BB\BA\BA\BA\BA\BA\BA\BB\BB\BA\BB\BA\BA\BA\BA\BB\BA\BA\BA\BA\BA\BA\BA\BA\BA\BA\BA\BA\BB\BA\BB\BB\BA\\
\vspace{-5pt}\BA\BA\BB\BB\BB\BA\BA\BB\BA\BB\BA\BB\BA\BB\BA\BB\BB\BA\BA\BA\BB\BA\BA\BB\BA\BA\BA\BA\BA\BB\BB\BB\BA\BB\BB\BA\BA\BB\BA\BB\BA\BA\BA\BB\BA\BA\BA\BA\BA\BA\BA\BB\BA\BB\BB\BA\BB\BA\BA\BB\BA\BB\BA\BA\BA\BA\BA\BA\BB\BA\BA\BA\BA\BA\BA\BA\BB\BA\BB\BB\BA\BA\BA\BB\BA\BA\BA\BA\BA\BA\BA\BA\BA\BA\BA\BA\BB\BA\BA\BB\\
\vspace{-5pt}\BB\BA\BA\BA\BA\BB\BB\BA\BB\BA\BB\BA\BB\BA\BB\BA\BA\BB\BB\BB\BA\BB\BA\BA\BB\BA\BA\BA\BA\BA\BA\BA\BB\BA\BA\BB\BB\BA\BB\BA\BB\BB\BB\BA\BB\BA\BA\BA\BA\BA\BA\BA\BB\BA\BA\BB\BA\BB\BB\BA\BB\BA\BB\BA\BA\BA\BA\BA\BA\BB\BA\BA\BA\BA\BA\BA\BA\BB\BA\BA\BB\BB\BA\BA\BB\BA\BA\BA\BA\BA\BA\BA\BA\BA\BA\BA\BA\BB\BA\BA\\
\vspace{-5pt}\BA\BB\BA\BA\BA\BA\BA\BB\BA\BB\BA\BB\BA\BB\BA\BB\BB\BA\BA\BA\BB\BA\BB\BB\BA\BB\BB\BA\BA\BA\BA\BA\BA\BB\BA\BA\BA\BB\BA\BB\BA\BA\BA\BB\BA\BB\BB\BB\BB\BA\BA\BA\BA\BB\BA\BA\BB\BA\BA\BB\BA\BB\BA\BB\BB\BA\BA\BA\BA\BA\BB\BA\BA\BA\BA\BA\BA\BA\BB\BA\BA\BA\BB\BB\BA\BB\BA\BA\BA\BA\BA\BA\BA\BA\BA\BA\BA\BA\BB\BA\\
\vspace{-5pt}\BA\BA\BB\BA\BA\BA\BA\BA\BB\BA\BB\BA\BB\BA\BB\BA\BA\BB\BB\BA\BA\BB\BA\BA\BB\BA\BA\BB\BB\BB\BA\BA\BA\BA\BB\BA\BA\BA\BB\BA\BB\BA\BA\BA\BB\BA\BA\BA\BA\BB\BB\BB\BB\BA\BB\BA\BA\BB\BA\BA\BB\BA\BB\BA\BA\BB\BB\BA\BA\BA\BA\BB\BA\BA\BA\BA\BA\BA\BA\BB\BA\BA\BA\BA\BB\BA\BB\BB\BA\BA\BA\BA\BA\BA\BA\BA\BA\BA\BA\BB\\
\vspace{-5pt}\BB\BA\BA\BB\BA\BA\BA\BA\BA\BB\BA\BB\BA\BB\BA\BB\BA\BA\BA\BB\BB\BA\BB\BA\BA\BB\BA\BA\BA\BA\BB\BB\BB\BA\BA\BB\BA\BA\BA\BB\BA\BB\BA\BA\BA\BB\BA\BA\BA\BA\BA\BA\BA\BB\BA\BB\BB\BA\BB\BB\BA\BB\BA\BB\BB\BA\BA\BB\BB\BA\BA\BA\BB\BA\BA\BA\BA\BA\BA\BA\BB\BA\BA\BA\BA\BB\BA\BA\BB\BB\BA\BA\BA\BA\BA\BA\BA\BA\BA\BA\\
\vspace{-5pt}\BA\BB\BA\BA\BB\BA\BA\BA\BA\BA\BB\BA\BB\BA\BB\BA\BB\BA\BA\BA\BA\BB\BA\BB\BB\BA\BB\BA\BA\BA\BA\BA\BA\BB\BB\BA\BB\BB\BA\BA\BB\BA\BB\BA\BA\BA\BB\BA\BA\BA\BA\BA\BA\BA\BB\BA\BA\BB\BA\BA\BB\BA\BB\BA\BA\BB\BB\BA\BA\BB\BB\BB\BA\BB\BB\BA\BA\BA\BA\BA\BA\BB\BA\BA\BA\BA\BB\BA\BA\BA\BB\BB\BA\BA\BA\BA\BA\BA\BA\BA\\
\vspace{-5pt}\BA\BA\BB\BA\BA\BB\BA\BA\BA\BA\BA\BB\BA\BB\BA\BB\BA\BB\BA\BA\BA\BA\BB\BA\BA\BB\BA\BB\BB\BA\BA\BA\BA\BA\BA\BB\BA\BA\BB\BB\BA\BB\BA\BB\BB\BA\BA\BB\BA\BA\BA\BA\BA\BA\BA\BB\BA\BA\BB\BA\BA\BB\BA\BB\BA\BA\BA\BB\BB\BA\BA\BA\BB\BA\BA\BB\BB\BB\BB\BA\BA\BA\BB\BA\BA\BA\BA\BB\BA\BA\BA\BA\BB\BB\BA\BA\BA\BA\BA\BA\\
\vspace{-5pt}\BA\BA\BA\BB\BA\BA\BB\BA\BA\BA\BA\BA\BB\BA\BB\BA\BB\BA\BB\BA\BA\BA\BA\BB\BA\BA\BB\BA\BA\BB\BB\BA\BA\BA\BA\BA\BB\BA\BA\BA\BB\BA\BB\BA\BA\BB\BB\BA\BB\BB\BA\BA\BA\BA\BA\BA\BB\BA\BA\BB\BA\BA\BB\BA\BB\BA\BA\BA\BA\BB\BB\BA\BA\BB\BA\BA\BA\BA\BA\BB\BB\BB\BA\BB\BB\BA\BA\BA\BB\BA\BA\BA\BA\BA\BB\BB\BA\BA\BA\BA\\
\vspace{-5pt}\BA\BA\BA\BA\BB\BA\BA\BB\BA\BA\BA\BA\BA\BB\BA\BB\BA\BB\BA\BB\BA\BA\BA\BA\BB\BA\BA\BB\BA\BA\BA\BB\BB\BA\BA\BA\BA\BB\BA\BA\BA\BB\BA\BB\BA\BA\BA\BB\BA\BA\BB\BB\BB\BA\BA\BA\BA\BB\BA\BA\BB\BA\BA\BB\BA\BB\BA\BA\BA\BA\BA\BB\BB\BA\BB\BA\BA\BA\BA\BA\BA\BA\BB\BA\BA\BB\BB\BB\BA\BB\BB\BA\BA\BA\BA\BA\BB\BB\BA\BA\\
\vspace{-5pt}\BA\BA\BA\BA\BA\BB\BA\BA\BB\BA\BA\BA\BA\BA\BB\BA\BB\BA\BB\BA\BB\BA\BA\BA\BA\BB\BA\BA\BB\BA\BA\BA\BA\BB\BB\BA\BA\BA\BB\BA\BA\BA\BB\BA\BB\BA\BA\BA\BB\BA\BA\BA\BA\BB\BB\BB\BA\BA\BB\BA\BA\BB\BA\BA\BB\BA\BB\BA\BA\BA\BA\BA\BA\BB\BA\BB\BB\BA\BA\BA\BA\BA\BA\BB\BA\BA\BA\BA\BB\BA\BA\BB\BB\BB\BB\BA\BA\BA\BB\BB\\
\vspace{-5pt}\BB\BB\BB\BA\BA\BA\BB\BA\BA\BB\BA\BA\BA\BA\BA\BB\BA\BB\BA\BB\BA\BB\BA\BA\BA\BA\BB\BA\BA\BB\BA\BA\BA\BA\BA\BB\BB\BA\BA\BB\BA\BA\BA\BB\BA\BB\BA\BA\BA\BB\BA\BA\BA\BA\BA\BA\BB\BB\BA\BB\BB\BA\BB\BA\BA\BB\BA\BB\BA\BA\BA\BA\BA\BA\BB\BA\BA\BB\BB\BA\BA\BA\BA\BA\BB\BA\BA\BA\BA\BB\BA\BA\BA\BA\BA\BB\BB\BB\BA\BA\\
\vspace{-5pt}\BA\BA\BA\BB\BB\BB\BA\BB\BB\BA\BB\BA\BA\BA\BA\BA\BB\BA\BB\BA\BB\BA\BB\BA\BA\BA\BA\BB\BA\BA\BB\BA\BA\BA\BA\BA\BA\BB\BB\BA\BB\BA\BA\BA\BB\BA\BB\BA\BA\BA\BB\BA\BA\BA\BA\BA\BA\BA\BB\BA\BA\BB\BA\BB\BB\BA\BB\BA\BB\BB\BA\BA\BA\BA\BA\BB\BA\BA\BA\BB\BB\BA\BA\BA\BA\BB\BA\BA\BA\BA\BB\BA\BA\BA\BA\BA\BA\BA\BB\BB
\end{center}
\caption{Initial path encoding and first 25 steps of the dynamics for particle configuration sampled from the periodic i.i.d.\ configuration of Example \ref{periidex} with $p=0.35$, i.e.\ $\beta_0=0.62$.}\label{iidfig}
\end{figure}

\begin{exmp}[Periodic Markov initial configuration]\label{permarex} Again similarly to \cite[Remark 1.12]{CKSS}, let $p_0,p_1\in (0,1)$, and consider the parameter choice
\[\beta_0=\log \left(\frac{1-p_0}{p_1}\right),\qquad \beta_1=\log \left(\frac{p_1(1-p_0)}{p_0(1-p_1)}\right),\qquad\beta_k=0,\:\forall k \ge 2.\]
(Figure \ref{markovfig} shows a typical realisation of a configuration chosen according the associated Gibbs measure, and its subsequent evolution.) For these parameters, one can check that
\[\mathbf{P}\left((\eta^N_n)_{n=1}^{N}=(x_n)_{n=1}^{N}\right)\propto\prod_{n=1}^{N}P(x_{n-1},x_{n})\mathbf{1}_{\left\{\sum_{i=1}^{N}{x_i}<N/2\right\}},\]
where, as above, we have supposed that $x_0:=x_N$ in the preceding formula, and the matrix $P=(P(x,y))_{x,y\in\{0,1\}}$ is given by \eqref{pdef}. It follows that one has the following alternative characterisation of the law of $\eta^N$ via the formula
\begin{equation}\label{cyclicmarkov}
\mathbf{E}\left(F\left((\eta^N_n)_{n=1}^{N}\right)\right)=\frac{\mathbf{E}\left(\nu(\eta_0)^{-1}F\left((\eta_n)_{n=1}^{N}\right)\:\vline\:\eta_N=\eta_0,\:S_N>0\right)}{\mathbf{E}\left(\nu(\eta_0)^{-1}\:\vline\:\eta_N=\eta_0,\:S_N>0\right)},
\end{equation}
where $\eta$ is the two-sided stationary Markov configuration of Subsection \ref{markovsec} (noting that we now allow an increased range of parameters $p_0,p_1$), $\nu$ is its invariant measure, and the above formula holds for any function $F:\{0,1\}^{N}\rightarrow\mathbb{R}$. In particular, the initial segment of $\eta^N$ is obtained from $\eta$ by conditioning the latter process to return to its starting state at time $N$ and on seeing less than $N/2$ particles by that time, as well as weighting probabilities by $\nu(\eta_0)^{-1}$. Note that the latter step has the effect of removing the distributional influence of the initial state, thus ensuring the law of $\eta^N$ is stationary under spatial shifts (which is checked more generally as part of Lemma \ref{etalem} below). We note that a similar definition, without the $\nu(\eta_0)^{-1}$ term and $S_N>0$ conditioning, of a (non-stationary) cyclic Markov chain was given in \cite{Albenque}. Finally, the restriction $p_0+p_1<1$ of Subsection \ref{markovsec} is equivalent to taking $\beta_0>0$, and, similarly to the previous example, we will check that $\eta^N$ converges in distribution to $\eta$ as $N\rightarrow\infty$ in this regime (see Proposition \ref{infvollim}(b)).
\end{exmp}

\begin{figure}[t]
\begin{center}
\includegraphics[width=0.9\textwidth,height=0.3\textwidth]{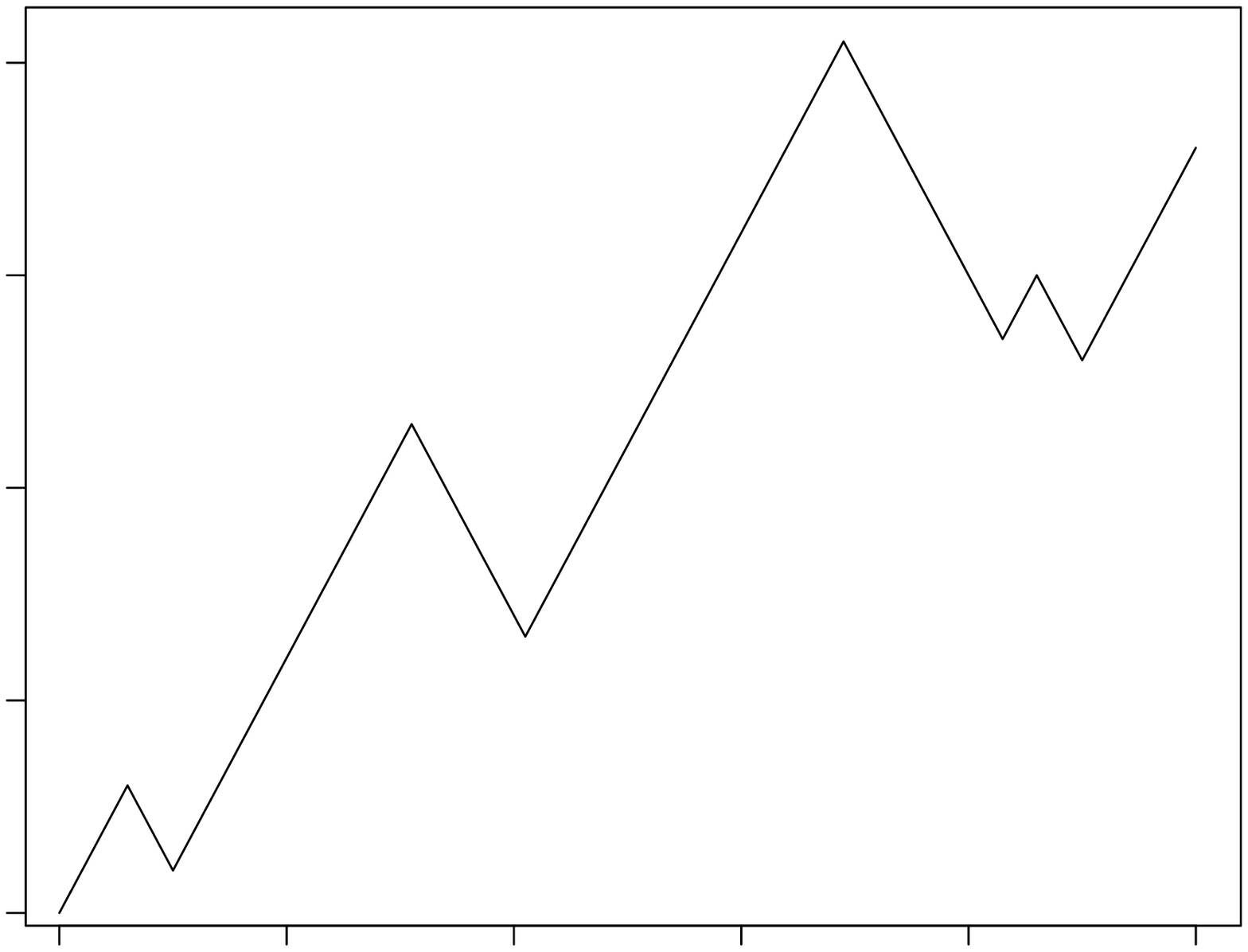}
\rput(-11.6,0.4){$0$}
\rput(-9.53,0.4){$20$}
\rput(-7.46,0.4){$40$}
\rput(-5.39,0.4){$60$}
\rput(-3.32,0.4){$80$}
\rput(-1.25,0.4){$100$}
\rput(-12.3,.7){$0$}
\rput(-12.33,1.45){$10$}
\rput(-12.33,2.2){$20$}
\rput(-12.33,2.95){$30$}
\rput(-12.33,3.7){$40$}

\vspace{-5pt}\BA\BA\BA\BA\BA\BA\BB\BB\BB\BB\BA\BA\BA\BA\BA\BA\BA\BA\BA\BA\BA\BA\BA\BA\BA\BA\BA\BA\BA\BA\BA\BB\BB\BB\BB\BB\BB\BB\BB\BB\BB\BA\BA\BA\BA\BA\BA\BA\BA\BA\BA\BA\BA\BA\BA\BA\BA\BA\BA\BA\BA\BA\BA\BA\BA\BA\BA\BA\BA\BB\BB\BB\BB\BB\BB\BB\BB\BB\BB\BB\BB\BB\BB\BA\BA\BA\BB\BB\BB\BB\BA\BA\BA\BA\BA\BA\BA\BA\BA\BA\\
\vspace{-5pt}\BB\BB\BB\BB\BB\BA\BA\BA\BA\BA\BB\BB\BB\BB\BA\BA\BA\BA\BA\BA\BA\BA\BA\BA\BA\BA\BA\BA\BA\BA\BA\BA\BA\BA\BA\BA\BA\BA\BA\BA\BA\BB\BB\BB\BB\BB\BB\BB\BB\BB\BB\BA\BA\BA\BA\BA\BA\BA\BA\BA\BA\BA\BA\BA\BA\BA\BA\BA\BA\BA\BA\BA\BA\BA\BA\BA\BA\BA\BA\BA\BA\BA\BA\BB\BB\BB\BA\BA\BA\BA\BB\BB\BB\BB\BB\BB\BB\BB\BB\BB\\
\vspace{-5pt}\BA\BA\BA\BA\BA\BB\BB\BB\BB\BB\BA\BA\BA\BA\BB\BB\BB\BB\BB\BB\BB\BB\BB\BB\BB\BB\BB\BB\BA\BA\BA\BA\BA\BA\BA\BA\BA\BA\BA\BA\BA\BA\BA\BA\BA\BA\BA\BA\BA\BA\BA\BB\BB\BB\BB\BB\BB\BB\BB\BB\BB\BA\BA\BA\BA\BA\BA\BA\BA\BA\BA\BA\BA\BA\BA\BA\BA\BA\BA\BA\BA\BA\BA\BA\BA\BA\BB\BB\BB\BA\BA\BA\BA\BA\BA\BA\BA\BA\BA\BA\\
\vspace{-5pt}\BA\BA\BA\BA\BA\BA\BA\BA\BA\BA\BB\BB\BB\BB\BA\BA\BA\BA\BA\BA\BA\BA\BA\BA\BA\BA\BA\BA\BB\BB\BB\BB\BB\BB\BB\BB\BB\BB\BB\BB\BB\BB\BB\BA\BA\BA\BA\BA\BA\BA\BA\BA\BA\BA\BA\BA\BA\BA\BA\BA\BA\BB\BB\BB\BB\BB\BB\BB\BB\BB\BB\BA\BA\BA\BA\BA\BA\BA\BA\BA\BA\BA\BA\BA\BA\BA\BA\BA\BA\BB\BB\BB\BA\BA\BA\BA\BA\BA\BA\BA\\
\vspace{-5pt}\BA\BA\BA\BA\BA\BA\BA\BA\BA\BA\BA\BA\BA\BA\BB\BB\BB\BB\BA\BA\BA\BA\BA\BA\BA\BA\BA\BA\BA\BA\BA\BA\BA\BA\BA\BA\BA\BA\BA\BA\BA\BA\BA\BB\BB\BB\BB\BB\BB\BB\BB\BB\BB\BB\BB\BB\BB\BB\BA\BA\BA\BA\BA\BA\BA\BA\BA\BA\BA\BA\BA\BB\BB\BB\BB\BB\BB\BB\BB\BB\BB\BA\BA\BA\BA\BA\BA\BA\BA\BA\BA\BA\BB\BB\BB\BA\BA\BA\BA\BA\\
\vspace{-5pt}\BA\BA\BA\BA\BA\BA\BA\BA\BA\BA\BA\BA\BA\BA\BA\BA\BA\BA\BB\BB\BB\BB\BA\BA\BA\BA\BA\BA\BA\BA\BA\BA\BA\BA\BA\BA\BA\BA\BA\BA\BA\BA\BA\BA\BA\BA\BA\BA\BA\BA\BA\BA\BA\BA\BA\BA\BA\BA\BB\BB\BB\BB\BB\BB\BB\BB\BB\BB\BB\BB\BB\BA\BA\BA\BA\BA\BA\BA\BA\BA\BA\BB\BB\BB\BB\BB\BB\BB\BB\BB\BB\BB\BA\BA\BA\BB\BB\BB\BB\BA\\
\vspace{-5pt}\BB\BB\BB\BB\BB\BB\BB\BB\BB\BB\BB\BB\BB\BB\BA\BA\BA\BA\BA\BA\BA\BA\BB\BB\BB\BB\BA\BA\BA\BA\BA\BA\BA\BA\BA\BA\BA\BA\BA\BA\BA\BA\BA\BA\BA\BA\BA\BA\BA\BA\BA\BA\BA\BA\BA\BA\BA\BA\BA\BA\BA\BA\BA\BA\BA\BA\BA\BA\BA\BA\BA\BB\BB\BB\BB\BB\BB\BB\BB\BB\BB\BA\BA\BA\BA\BA\BA\BA\BA\BA\BA\BA\BB\BB\BB\BA\BA\BA\BA\BB\\
\vspace{-5pt}\BA\BA\BA\BA\BA\BA\BA\BA\BA\BA\BA\BA\BA\BA\BB\BB\BB\BB\BB\BB\BB\BB\BA\BA\BA\BA\BB\BB\BB\BB\BB\BB\BB\BB\BB\BB\BB\BA\BA\BA\BA\BA\BA\BA\BA\BA\BA\BA\BA\BA\BA\BA\BA\BA\BA\BA\BA\BA\BA\BA\BA\BA\BA\BA\BA\BA\BA\BA\BA\BA\BA\BA\BA\BA\BA\BA\BA\BA\BA\BA\BA\BB\BB\BB\BB\BB\BB\BB\BB\BB\BB\BA\BA\BA\BA\BB\BB\BB\BA\BA\\
\vspace{-5pt}\BB\BB\BB\BB\BB\BB\BB\BA\BA\BA\BA\BA\BA\BA\BA\BA\BA\BA\BA\BA\BA\BA\BB\BB\BB\BB\BA\BA\BA\BA\BA\BA\BA\BA\BA\BA\BA\BB\BB\BB\BB\BB\BB\BB\BB\BB\BB\BB\BB\BB\BB\BB\BA\BA\BA\BA\BA\BA\BA\BA\BA\BA\BA\BA\BA\BA\BA\BA\BA\BA\BA\BA\BA\BA\BA\BA\BA\BA\BA\BA\BA\BA\BA\BA\BA\BA\BA\BA\BA\BA\BA\BB\BB\BB\BB\BA\BA\BA\BB\BB\\
\vspace{-5pt}\BA\BA\BA\BA\BA\BA\BA\BB\BB\BB\BB\BB\BB\BB\BB\BB\BB\BA\BA\BA\BA\BA\BA\BA\BA\BA\BB\BB\BB\BB\BA\BA\BA\BA\BA\BA\BA\BA\BA\BA\BA\BA\BA\BA\BA\BA\BA\BA\BA\BA\BA\BA\BB\BB\BB\BB\BB\BB\BB\BB\BB\BB\BB\BB\BB\BB\BB\BA\BA\BA\BA\BA\BA\BA\BA\BA\BA\BA\BA\BA\BA\BA\BA\BA\BA\BA\BA\BA\BA\BA\BA\BA\BA\BA\BA\BB\BB\BB\BA\BA\\
\vspace{-5pt}\BB\BA\BA\BA\BA\BA\BA\BA\BA\BA\BA\BA\BA\BA\BA\BA\BA\BB\BB\BB\BB\BB\BB\BB\BB\BB\BA\BA\BA\BA\BB\BB\BB\BB\BB\BA\BA\BA\BA\BA\BA\BA\BA\BA\BA\BA\BA\BA\BA\BA\BA\BA\BA\BA\BA\BA\BA\BA\BA\BA\BA\BA\BA\BA\BA\BA\BA\BB\BB\BB\BB\BB\BB\BB\BB\BB\BB\BB\BB\BB\BB\BB\BA\BA\BA\BA\BA\BA\BA\BA\BA\BA\BA\BA\BA\BA\BA\BA\BB\BB\\
\vspace{-5pt}\BA\BB\BB\BB\BA\BA\BA\BA\BA\BA\BA\BA\BA\BA\BA\BA\BA\BA\BA\BA\BA\BA\BA\BA\BA\BA\BB\BB\BB\BB\BA\BA\BA\BA\BA\BB\BB\BB\BB\BB\BB\BB\BB\BB\BB\BA\BA\BA\BA\BA\BA\BA\BA\BA\BA\BA\BA\BA\BA\BA\BA\BA\BA\BA\BA\BA\BA\BA\BA\BA\BA\BA\BA\BA\BA\BA\BA\BA\BA\BA\BA\BA\BB\BB\BB\BB\BB\BB\BB\BB\BB\BB\BB\BB\BB\BB\BB\BA\BA\BA\\
\vspace{-5pt}\BB\BA\BA\BA\BB\BB\BB\BB\BB\BB\BB\BB\BB\BB\BB\BB\BB\BB\BA\BA\BA\BA\BA\BA\BA\BA\BA\BA\BA\BA\BB\BB\BB\BB\BA\BA\BA\BA\BA\BA\BA\BA\BA\BA\BA\BB\BB\BB\BB\BB\BB\BB\BB\BB\BB\BA\BA\BA\BA\BA\BA\BA\BA\BA\BA\BA\BA\BA\BA\BA\BA\BA\BA\BA\BA\BA\BA\BA\BA\BA\BA\BA\BA\BA\BA\BA\BA\BA\BA\BA\BA\BA\BA\BA\BA\BA\BA\BB\BB\BB\\
\vspace{-5pt}\BA\BB\BB\BB\BA\BA\BA\BA\BA\BA\BA\BA\BA\BA\BA\BA\BA\BA\BB\BB\BB\BB\BB\BB\BB\BB\BB\BB\BB\BB\BA\BA\BA\BA\BB\BB\BB\BB\BB\BB\BB\BA\BA\BA\BA\BA\BA\BA\BA\BA\BA\BA\BA\BA\BA\BB\BB\BB\BB\BB\BB\BB\BB\BB\BB\BA\BA\BA\BA\BA\BA\BA\BA\BA\BA\BA\BA\BA\BA\BA\BA\BA\BA\BA\BA\BA\BA\BA\BA\BA\BA\BA\BA\BA\BA\BA\BA\BA\BA\BA\\
\vspace{-5pt}\BA\BA\BA\BA\BB\BB\BB\BA\BA\BA\BA\BA\BA\BA\BA\BA\BA\BA\BA\BA\BA\BA\BA\BA\BA\BA\BA\BA\BA\BA\BB\BB\BB\BB\BA\BA\BA\BA\BA\BA\BA\BB\BB\BB\BB\BB\BB\BB\BB\BB\BB\BB\BB\BB\BB\BA\BA\BA\BA\BA\BA\BA\BA\BA\BA\BB\BB\BB\BB\BB\BB\BB\BB\BB\BB\BB\BA\BA\BA\BA\BA\BA\BA\BA\BA\BA\BA\BA\BA\BA\BA\BA\BA\BA\BA\BA\BA\BA\BA\BA\\
\vspace{-5pt}\BA\BA\BA\BA\BA\BA\BA\BB\BB\BB\BA\BA\BA\BA\BA\BA\BA\BA\BA\BA\BA\BA\BA\BA\BA\BA\BA\BA\BA\BA\BA\BA\BA\BA\BB\BB\BB\BB\BA\BA\BA\BA\BA\BA\BA\BA\BA\BA\BA\BA\BA\BA\BA\BA\BA\BB\BB\BB\BB\BB\BB\BB\BB\BB\BB\BA\BA\BA\BA\BA\BA\BA\BA\BA\BA\BA\BB\BB\BB\BB\BB\BB\BB\BB\BB\BB\BB\BB\BB\BB\BB\BA\BA\BA\BA\BA\BA\BA\BA\BA\\
\vspace{-5pt}\BB\BB\BB\BB\BB\BB\BA\BA\BA\BA\BB\BB\BB\BA\BA\BA\BA\BA\BA\BA\BA\BA\BA\BA\BA\BA\BA\BA\BA\BA\BA\BA\BA\BA\BA\BA\BA\BA\BB\BB\BB\BB\BA\BA\BA\BA\BA\BA\BA\BA\BA\BA\BA\BA\BA\BA\BA\BA\BA\BA\BA\BA\BA\BA\BA\BB\BB\BB\BB\BB\BB\BB\BB\BB\BB\BA\BA\BA\BA\BA\BA\BA\BA\BA\BA\BA\BA\BA\BA\BA\BA\BB\BB\BB\BB\BB\BB\BB\BB\BB\\
\vspace{-5pt}\BA\BA\BA\BA\BA\BA\BB\BB\BB\BB\BA\BA\BA\BB\BB\BB\BB\BB\BB\BB\BB\BB\BB\BB\BB\BB\BB\BA\BA\BA\BA\BA\BA\BA\BA\BA\BA\BA\BA\BA\BA\BA\BB\BB\BB\BB\BA\BA\BA\BA\BA\BA\BA\BA\BA\BA\BA\BA\BA\BA\BA\BA\BA\BA\BA\BA\BA\BA\BA\BA\BA\BA\BA\BA\BA\BB\BB\BB\BB\BB\BB\BB\BB\BB\BB\BA\BA\BA\BA\BA\BA\BA\BA\BA\BA\BA\BA\BA\BA\BA\\
\vspace{-5pt}\BA\BA\BA\BA\BA\BA\BA\BA\BA\BA\BB\BB\BB\BA\BA\BA\BA\BA\BA\BA\BA\BA\BA\BA\BA\BA\BA\BB\BB\BB\BB\BB\BB\BB\BB\BB\BB\BB\BB\BB\BB\BB\BA\BA\BA\BA\BB\BB\BB\BB\BA\BA\BA\BA\BA\BA\BA\BA\BA\BA\BA\BA\BA\BA\BA\BA\BA\BA\BA\BA\BA\BA\BA\BA\BA\BA\BA\BA\BA\BA\BA\BA\BA\BA\BA\BB\BB\BB\BB\BB\BB\BB\BB\BB\BB\BA\BA\BA\BA\BA\\
\vspace{-5pt}\BB\BB\BB\BB\BB\BA\BA\BA\BA\BA\BA\BA\BA\BB\BB\BB\BA\BA\BA\BA\BA\BA\BA\BA\BA\BA\BA\BA\BA\BA\BA\BA\BA\BA\BA\BA\BA\BA\BA\BA\BA\BA\BB\BB\BB\BB\BA\BA\BA\BA\BB\BB\BB\BB\BB\BB\BB\BB\BB\BB\BB\BB\BB\BB\BB\BA\BA\BA\BA\BA\BA\BA\BA\BA\BA\BA\BA\BA\BA\BA\BA\BA\BA\BA\BA\BA\BA\BA\BA\BA\BA\BA\BA\BA\BA\BB\BB\BB\BB\BB\\
\vspace{-5pt}\BA\BA\BA\BA\BA\BB\BB\BB\BB\BB\BB\BB\BB\BA\BA\BA\BB\BB\BB\BB\BB\BA\BA\BA\BA\BA\BA\BA\BA\BA\BA\BA\BA\BA\BA\BA\BA\BA\BA\BA\BA\BA\BA\BA\BA\BA\BB\BB\BB\BB\BA\BA\BA\BA\BA\BA\BA\BA\BA\BA\BA\BA\BA\BA\BA\BB\BB\BB\BB\BB\BB\BB\BB\BB\BB\BB\BB\BB\BB\BB\BA\BA\BA\BA\BA\BA\BA\BA\BA\BA\BA\BA\BA\BA\BA\BA\BA\BA\BA\BA\\
\vspace{-5pt}\BA\BA\BA\BA\BA\BA\BA\BA\BA\BA\BA\BA\BA\BB\BB\BB\BA\BA\BA\BA\BA\BB\BB\BB\BB\BB\BB\BB\BB\BB\BB\BA\BA\BA\BA\BA\BA\BA\BA\BA\BA\BA\BA\BA\BA\BA\BA\BA\BA\BA\BB\BB\BB\BB\BA\BA\BA\BA\BA\BA\BA\BA\BA\BA\BA\BA\BA\BA\BA\BA\BA\BA\BA\BA\BA\BA\BA\BA\BA\BA\BB\BB\BB\BB\BB\BB\BB\BB\BB\BB\BB\BB\BB\BB\BB\BA\BA\BA\BA\BA\\
\vspace{-5pt}\BB\BB\BB\BB\BB\BB\BB\BB\BB\BB\BA\BA\BA\BA\BA\BA\BB\BB\BB\BA\BA\BA\BA\BA\BA\BA\BA\BA\BA\BA\BA\BB\BB\BB\BB\BB\BB\BB\BB\BB\BB\BA\BA\BA\BA\BA\BA\BA\BA\BA\BA\BA\BA\BA\BB\BB\BB\BB\BA\BA\BA\BA\BA\BA\BA\BA\BA\BA\BA\BA\BA\BA\BA\BA\BA\BA\BA\BA\BA\BA\BA\BA\BA\BA\BA\BA\BA\BA\BA\BA\BA\BA\BA\BA\BA\BB\BB\BB\BB\BB\\
\vspace{-5pt}\BA\BA\BA\BA\BA\BA\BA\BA\BA\BA\BB\BB\BB\BB\BB\BB\BA\BA\BA\BB\BB\BB\BB\BB\BB\BB\BB\BB\BB\BB\BB\BA\BA\BA\BA\BA\BA\BA\BA\BA\BA\BB\BB\BB\BB\BB\BB\BB\BB\BB\BB\BA\BA\BA\BA\BA\BA\BA\BB\BB\BB\BB\BA\BA\BA\BA\BA\BA\BA\BA\BA\BA\BA\BA\BA\BA\BA\BA\BA\BA\BA\BA\BA\BA\BA\BA\BA\BA\BA\BA\BA\BA\BA\BA\BA\BA\BA\BA\BA\BA\\
\vspace{-5pt}\BA\BA\BA\BA\BA\BA\BA\BA\BA\BA\BA\BA\BA\BA\BA\BA\BB\BB\BB\BA\BA\BA\BA\BA\BA\BA\BA\BA\BA\BA\BA\BB\BB\BB\BB\BB\BB\BB\BB\BB\BB\BA\BA\BA\BA\BA\BA\BA\BA\BA\BA\BB\BB\BB\BB\BB\BB\BB\BA\BA\BA\BA\BB\BB\BB\BB\BB\BB\BB\BB\BB\BB\BB\BB\BA\BA\BA\BA\BA\BA\BA\BA\BA\BA\BA\BA\BA\BA\BA\BA\BA\BA\BA\BA\BA\BA\BA\BA\BA\BA
\end{center}
\caption{Initial path encoding and first 25 steps of the dynamics for particle configuration sampled from the periodic Markov configuration of Example \ref{permarex} with $N=100$ and parameters $p_0=0.11$, $p_1=0.80$, i.e.\ $\beta_0=0.11$, $\beta_1=3.48$. (Note these parameters correspond to a density of $\rho=0.35$ for the non-periodic version of the configuration, matching that for the non-periodic version of the i.i.d.\ example shown in Figure \ref{iidfig}.)}\label{markovfig}
\end{figure}

\begin{exmp}[Periodic bounded soliton configuration]\label{perboundedex} Once again similarly to \cite[Remark 1.12]{CKSS}, let $p\in (0,1)$ and $K\in\mathbb{N}$, and consider the parameter choice
\[\beta_0=\log \left(\frac{1-p}{p}\right),\qquad \beta_k=0,\:\forall k\in\{1,\dots,K\},\qquad \beta_k=\infty,\:\forall k> K.\]
For these parameters, one can check that
\begin{equation}\label{bexp0}
\mathbf{E}\left(F\left((\eta^N_n)_{n=1}^{N}\right)\right)=\mathbf{E}\left(F\left((\eta_n)_{n=1}^{N}\right)\:\vline\:\mathcal{A}_{N,K},\:S_N>0\right),
\end{equation}
where $\eta$ is an i.i.d.\ sequence of Bernoulli($p$) random variables, and
\begin{equation}\label{ankdef}
\mathcal{A}_{N,K}=\left\{\max_{0\leq n\leq N}\max\left\{\max_{0\leq m\leq n}(S_m-S_n),\:\max_{n\leq m\leq N}(S_m-S_N-S_n)\right\}\leq K\right\}.
\end{equation}
(Note the expression involving nested maxima simply describes the supremum of the carrier corresponding to the cyclic repetition of $(\eta_n)_{n=1}^{N}$.) We will check that, for any parameters $p\in (0,1)$ and $K\in\mathbb{N}$, $\eta^N$ converges in distribution to $\tilde{\eta}^{(K)}$, the example of Subsection \ref{boundedsec}, as $N\rightarrow\infty$ (see Proposition \ref{infvollim}(c)).
\end{exmp}

We now give the infinite volume limits for the previous three examples.

\begin{prop}\label{infvollim} (a) Let $p\in(0,\frac12)$, and $\eta^{N,iid}$ be the periodic i.i.d.\ configuration of Example \ref{periidex} (i.e.\ with law given by \eqref{iidexp}). Then
\[\eta^{N,iid}\buildrel{d}\over\rightarrow\eta^{iid}\]
as $N\rightarrow\infty$, where $\eta^{iid}$ is the i.i.d.\ configuration of Subsection \ref{iidsec}.\\
(b) Let $p_0,p_1\in(0,1)$ be such that $p_0+p_1<1$, and $\eta^{N,Mar}$ be the periodic Markov configuration of Example \ref{periidex} (i.e.\ with law given by \eqref{cyclicmarkov}). Then
\[\eta^{N,Mar}\buildrel{d}\over\rightarrow\eta^{Mar}\]
as $N\rightarrow\infty$, where $\eta^{Mar}$ is the configuration of Subsection \ref{markovsec}.\\
(c) Let $p\in(0,1)$ and $K\in\mathbb{N}$, and $\eta^{N,b}$ be the periodic bounded soliton configuration of Example \ref{perboundedex} (i.e.\ with law given by \eqref{bexp0}). Then
\[\eta^{N,b}\buildrel{d}\over\rightarrow\tilde{\eta}^{(K)}\]
as $N\rightarrow\infty$, where $\tilde{\eta}^{(K)}$ is the bounded soliton example of Subsection \ref{boundedsec}.
\end{prop}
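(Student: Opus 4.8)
\emph{Proof strategy.} The plan is to prove convergence of all finite-dimensional marginals, which, since $\{0,1\}^{\mathbb{Z}}$ is compact in the product topology, is equivalent to convergence in distribution. By the spatial stationarity of $\eta^N$ established in Lemma \ref{etalem}, it suffices in each case to control the law of $(\eta^N_n)_{n=1}^m$ for each fixed $m\in\mathbb{N}$.

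\emph{Parts (a) and (b).} Here the key point is that the positive drift hypothesis makes the conditioning on $\{S_N>0\}$ asymptotically vacuous. For $\eta$ i.i.d.\ Bernoulli$(p)$ with $p<\tfrac12$ (resp.\ the two-sided stationary Markov chain of Subsection \ref{markovsec} with $p_0+p_1<1$) the strong law of large numbers (resp.\ the ergodic theorem) gives $S_N/N\to 1-2p>0$ (resp.\ $\to 1-2\rho>0$) almost surely, so that $\mathbf{P}(S_N\le 0\mid (\eta_n)_{n=1}^m)\to 0$ for any fixed configuration on $\{1,\dots,m\}$. For (a) this yields, directly from \eqref{iidexp}, that $\mathbf{P}((\eta^{N,iid}_n)_{n=1}^m=(x_n)_{n=1}^m)\to\mathbf{P}((\eta_n)_{n=1}^m=(x_n)_{n=1}^m)$. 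For (b) one conditions the numerator and denominator of \eqref{cyclicmarkov} on $(\eta_n)_{n=0}^m$ and applies the Markov property: given $\eta_m$, the event $\{\eta_N=\eta_0\}$ has probability $P^{N-m}(\eta_m,\eta_0)$, which converges to $\nu(\eta_0)$ by convergence to equilibrium ($P$ being aperiodic since $P(0,0)=1-p_0>0$), while $\mathbf{P}(S_N\le0\mid\eta_m)\to0$ as above; summing over the finitely many window configurations, the weight $\nu(\eta_0)^{-1}$ cancels the limiting factor $\nu(\eta_0)$ in both numerator and denominator, and one obtains $\mathbf{E}(F((\eta^{N,Mar}_n)_{n=1}^m))\to\mathbf{E}(F((\eta^{Mar}_n)_{n=1}^m))$ for every bounded $F$, as required.

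\emph{Part (c).} This is the substantive case, and the strategy is to pass to the carrier, since $\tilde{\eta}^{(K)}$ is \emph{defined} via its carrier $\tilde{W}^{(K)}$. Fix $N$. On $\{S_N>0\}$ the cyclic repetition of $(\eta_n)_{n=1}^N$ has a well-defined periodic carrier, and (exactly as in the proof of Lemma \ref{wlem}) the map $(\eta_n)_{n=1}^N\mapsto(W_n)_{n=0}^N$ is a bijection onto the set $\mathcal{W}$ of nonnegative periodic carrier paths; moreover a direct check of the three possible transition weights of $P$ shows $\prod_{n=1}^N P(w_{n-1},w_n)=\mathbf{P}((\eta_n)_{n=1}^N=(x_n)_{n=1}^N)$ whenever $(w_n)$ is the carrier path of $(x_n)$. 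Hence, using \eqref{bexp0} together with the fact (noted below \eqref{ankdef}) that the nested maxima in $\mathcal{A}_{N,K}$ equal $\sup_n W_n$, conditioning the i.i.d.\ Bernoulli$(p)$ law on $\mathcal{A}_{N,K}\cap\{S_N>0\}$ corresponds on the carrier side to the law proportional to $\prod_{n=1}^N P^{(K)}(w_{n-1},w_n)$ on $\mathcal{W}\cap\{0,\dots,K\}^N$, i.e.\ a ``cyclic $P^{(K)}$-chain''. Computing its marginal on a window $(w_0,\dots,w_m)$ gives a numerator $\prod_{n=1}^m P^{(K)}(w_{n-1},w_n)\,(P^{(K)})^{N-m}(w_m,w_0)$ and normalising constant $\mathrm{tr}((P^{(K)})^N)$, up to corrections coming from the requirement that at least one flat step (equivalently, one use of the transition $0\to0$) occur; since dropping that requirement replaces $P^{(K)}$ by the strictly smaller irreducible matrix $\tilde{Q}$ obtained by zeroing the $(0,0)$ entry, whose spectral radius is strictly below the Perron eigenvalue $\lambda_K$ of $P^{(K)}$, these corrections are of smaller exponential order and may be discarded. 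The Perron--Frobenius theorem ($P^{(K)}$ being aperiodic, again by the self-loop at $0$) gives $(P^{(K)})^j(x,y)\sim\lambda_K^j\,h_K(x)g_K(y)$ for the left Perron eigenvector $g_K$, and $\mathrm{tr}((P^{(K)})^N)\sim\lambda_K^N$; telescoping the $h_K$ factors then shows the window marginal converges to a law proportional to $g_K(w_0)h_K(w_0)\prod_{n=1}^m\tilde{P}^{(K)}(w_{n-1},w_n)$, that is, a Markov chain with transition kernel $\tilde{P}^{(K)}$ and some initial distribution. Being a limit of the (shift-stationary, by the argument of Lemma \ref{etalem}) laws of $W^N$, this limit is itself stationary, so its one-dimensional marginal is forced to be the stationary distribution $\tilde{\pi}^{(K)}$ of $\tilde{P}^{(K)}$; thus it is the law of $\tilde{W}^{(K)}$. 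Finally, since $\eta^N_n=\mathbf{1}_{\{W^N_n=W^N_{n-1}+1\}}$ and $\tilde{\eta}^{(K)}_n=\mathbf{1}_{\{\tilde{W}^{(K)}_n=\tilde{W}^{(K)}_{n-1}+1\}}$ are local functions of the respective carriers, convergence of carrier marginals transfers to convergence of configuration marginals, which completes (c).

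\emph{Main obstacle.} The crux is the last part of (c): identifying the infinite-volume limit of the cyclic $P^{(K)}$-chain with the $h$-transformed stationary chain $\tilde{W}^{(K)}$. This rests on (i) the bijection and transition-weight bookkeeping relating the conditioned configuration to the carrier, (ii) a short argument that the ``flat step''/``$S_N>0$'' constraint contributes negligibly, via strict monotonicity of the Perron eigenvalue under decreasing a matrix entry, and (iii) the standard Perron--Frobenius asymptotics for $(P^{(K)})^N$, combined with the observation that any shift-stationary Markov chain with kernel $\tilde{P}^{(K)}$ must be the one started from $\tilde{\pi}^{(K)}$. Parts (a) and (b) are routine once the drift and equilibration facts above are established.
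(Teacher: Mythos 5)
Your proposal is correct. Parts (a) and (b) follow essentially the same route as the paper: the positive drift makes the $\{S_N>0\}$ conditioning asymptotically vacuous, and in (b) the Markov property plus convergence to equilibrium makes the $\nu(\eta_0)^{-1}$ weight cancel the limiting factor $\nu(\eta_0)$ in numerator and denominator; this is exactly the computation around \eqref{ratio}. Part (c) is where you genuinely diverge. The paper first recasts \eqref{bexp0} as a conditioning of a carrier $\hat{W}$ started \emph{uniformly} on $\{0,\dots,K\}$ (its identity \eqref{bexp}, using the observation that exactly one starting value closes the periodic loop), drops the $S_N>0$ conditioning, applies the two-sided taboo limit of \cite[Proposition 1]{GT} to the transition probabilities $\mathbf{P}(\hat{W}_{N-M}=w_0,\max\leq K\,|\,\hat{W}_0=\cdot)$ to identify the limit as $\tilde{W}^{(K)}$, and only then reinstates $S_N>0$ by a soft argument (the limit law has density below $\tfrac12$ by Corollary \ref{boundedcor}, so the conditioning event has probability tending to one). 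You instead push the conditioned law through the bijection onto $K$-bounded periodic carrier paths, view it as a cyclic $P^{(K)}$-chain, and run raw Perron--Frobenius asymptotics for $(P^{(K)})^j$ and $\mathrm{tr}((P^{(K)})^N)$, absorbing the $S_N>0$ constraint (equivalently, the presence of a flat step) into an exponentially negligible correction via the strict spectral-radius drop when the $(0,0)$ entry is zeroed; you then pin down the initial distribution by shift-stationarity rather than by noting directly that $g_K h_K\propto\tilde{\pi}^{(K)}$ (which also works, by reversibility of $P^{(K)}$ with respect to $\pi$). Your route is self-contained (no taboo-limit citation) and handles the $S_N>0$ conditioning quantitatively rather than by the paper's a posteriori density argument; the paper's route avoids left-eigenvector and trace bookkeeping by outsourcing it to \cite{GT}. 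Both rest on the same Perron--Frobenius input, and I see no gap in your argument.
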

\begin{proof} The proof of (a) is straightforward. Indeed, starting from \eqref{iidexp}, and applying that $\mathbf{P}(S_N>0)\rightarrow 1$, we obtain: for any $M\in\mathbb{N}$, $x\in\{0,1\}^M$,
\[\mathbf{P}\left((\eta^{N,iid}_n)_{n=1}^{M}=(x_n)_{n=1}^{M}\right)=\frac{\mathbf{P}\left((\eta^{iid}_n)_{n=1}^{M}=(x_n)_{n=1}^{M},\:S_N>0\right)}{\mathbf{P}\left(S_N>0\right)}\rightarrow\mathbf{P}\left((\eta^{iid}_n)_{n=1}^{M}=(x_n)_{n=1}^{M}\right).\]

For (b), we start from \eqref{cyclicmarkov} to deduce: for any $M\in\mathbb{N}$, $x\in\{0,1\}^M$,
\begin{equation}\label{ratio}
\mathbf{P}\left((\eta^{N,Mar}_n)_{n=1}^{M}=(x_n)_{n=1}^{M}\right)=
\frac{\mathbf{E}\left(\nu(\eta^{Mar}_0)^{-1}\mathbf{1}_{\left\{(\eta^{Mar}_n)_{n=1}^{M}=(x_n)_{n=1}^{M},\:\eta^{Mar}_N=\eta^{Mar}_0,\:S_N>0\right\}}\right)}
{\mathbf{E}\left(\nu(\eta^{Mar}_0)^{-1}\mathbf{1}_{\left\{\:\eta^{Mar}_N=\eta^{Mar}_0,\:S_N>0\right\}}\right)}.
\end{equation}
Now, by the definition of the Markov chain, the numerator can be written
\[\sum_{x_0\in\{0,1\}}\prod_{n=1}^{M}P(x_{n-1},x_n)\mathbf{P}\left(\eta^{Mar}_{N-M}=x_0,\:S_{N-M}+\sum_{n=1}^M(1-2x_n)>0\:\vline\:\eta^{Mar}_0=x_M\right).\]
Since $N^{-1}S_N\rightarrow1-2\rho>0$, $\mathbf{P}$-a.s., where $\rho$ was defined at \eqref{rhodef}, it readily follows that this expression converges as $N\rightarrow\infty$ to
\[\sum_{x_0\in\{0,1\}}\nu(x_0)\prod_{n=1}^{M}P(x_{n-1},x_n)=\nu(x_1)\prod_{n=2}^{M}P(x_{n-1},x_n)=\mathbf{P}\left((\eta^{Mar}_n)_{n=1}^{M}=(x_n)_{n=1}^{M}\right).\]
Summing over $x\in\{0,1\}^M$ shows that the denominator of \eqref{ratio} converges to one, and hence we have established the result in this case.

Finally, we prove (c) for $p\in(0,1)$, $K\in \mathbb{N}$. To this end, we first provide an alternative characterisation of \eqref{bexp0}. In particular, let $(\eta_n)_{n\geq 1}$ be i.i.d.\ with parameter $p\in(0,1)$, and $\hat{W}$ be the associated carrier process started from the initial condition that $\hat{W}_0$ is uniform on $\{0,1,\dots,K\}$. (Note the latter process is a Markov chain on $\mathbb{Z}_+$ with transition probabilities as at \eqref{Wprobs}.) We then claim that
\begin{equation}\label{bexp}
\mathbf{E}\left(F\left((\eta^{N,b}_n)_{n=1}^{N}\right)\right)=
\mathbf{E}\left(F\left((\eta_n)_{n=1}^{N}\right)\:\vline\:\hat{W}_N=\hat{W}_0,\:\max_{1\leq n\leq N}\hat{W}_n\leq K,\:S_N>0\right).
\end{equation}
To prove this, observe that for any sequence $x\in\{0,1\}^N$
\begin{eqnarray*}
\lefteqn{\mathbf{P}\left((\eta_n)_{n=1}^{N}=(x_n)_{n=1}^{N}\:\vline\:\hat{W}_N=\hat{W}_0,\:\max_{1\leq n\leq N}\hat{W}_n\leq K,\:S_N>0\right)}\\
&=&c
\sum_{w_0=0}^K\mathbf{P}\left((\eta_n)_{n=1}^{N}=(x_n)_{n=1}^{N},\:\hat{W}_0=w_0\right)\mathbf{1}_{\{w^{x,w_0}_N=w_0,\:
\max_{1\leq n\leq N}w^{x,w_0}_n\leq K,\:\sum_{n=1}^Nx_n<N/2\}},
\end{eqnarray*}
where $c:=\mathbf{P}(\hat{W}_N=\hat{W}_0,\:\max_{1\leq n\leq N}\hat{W}_n\leq K,\:S_N>0)^{-1}$ is the required normalising constant, and $(w^{x,w_0}_n)_{n=1}^N$ is the path of the carrier process corresponding to initial carrier value $w_0$ and particle configuration $x$. Since we are assuming the initial distribution of $\hat{W}$ is uniform, and it also holds that $\hat{W}_0$ is independent of $(\eta_n)_{n=1}^{N}=(x_n)_{n=1}^{N}$, we thus have that the above expression is equal to
\[c(K+1)^{-1}\mathbf{P}\left((\eta_n)_{n=1}^{N}=(x_n)_{n=1}^{N}\right)\mathbf{1}_{\{\sum_{n=1}^Nx_n<N/2\}}
\sum_{w_0=0}^K\mathbf{1}_{\{w^{x,w_0}_N=w_0,\:
\max_{1\leq n\leq N}w^{x,w_0}_n\leq K\}}.\]
Now, under the conditions that $w^{x,w_0}_N=w_0$ and $\sum_{n=1}^Nx_n<N/2$, it is straightforward to check that $\max_{1\leq n\leq N}w^{x,w_0}_n\leq K$ is equivalent to $x\in\mathcal{A}_{N,K}$ (in the sense that the associated path encoding satisfies the condition given in the definition of $\mathcal{A}_{N,K}$ at \eqref{ankdef}). And, it is moreover possible to show that under $\sum_{n=1}^Nx_n<N/2$ and $x\in\mathcal{A}_{N,K}$, the condition $w^{x,w_0}_N=w_0$ holds for exactly one $w_0$ (corresponding to $\max_{0\leq n\leq N}S_n-S_N$ for the relevant path encoding). Hence we conclude that
\begin{eqnarray*}
\lefteqn{\mathbf{P}\left((\eta_n)_{n=1}^{N}=(x_n)_{n=1}^{N}\:\vline\:\hat{W}_N=\hat{W}_0,\:\max_{1\leq n\leq N}\hat{W}_n\leq K,\:S_N>0\right)}\\
&=&c(K+1)^{-1}\mathbf{P}\left((\eta_n)_{n=1}^{N}=(x_n)_{n=1}^{N}\right)\mathbf{1}_{\{x\in\mathcal{A}_{N,K},\:
\sum_{n=1}^Nx_n<N/2\}},
\end{eqnarray*}
and hence \eqref{bexp} follows from the characterisation of the law of $\eta^{N,b}$ at \eqref{bexp0}. To study the limit of \eqref{bexp} as $N\rightarrow\infty$, we start by considering the corresponding formula without the $S_N>0$ conditioning. That is, given a sequence $x\in\{0,1\}^M$ representing a particle configuration, we will deduce the $N\rightarrow\infty$ asymptotics of
\begin{equation}\label{bbb}
\mathbf{P}\left((\eta_n)_{n=1}^{M}=(x_n)_{n=1}^{M}\:\vline\:\hat{W}_N=\hat{W}_0,\:\max_{1\leq n\leq N}\hat{W}_n\leq K\right).
\end{equation}
Decomposing over the value of $\hat{W}_0$, we have that the above probability can be written
\begin{eqnarray*}
\lefteqn{\sum_{w_0=0}^K\mathbf{P}\left(\hat{W}_0=w_0,\:(\hat{W}_n)_{n=1}^M=(w^{x,w_0}_n)_{n=1}^M\:\vline\:\hat{W}_N=\hat{W}_0,\:\max_{1\leq n\leq N}\hat{W}_n\leq K\right)}\\
&=&\frac{\sum_{w_0=0}^K\prod_{n=1}^MP_W(w^{x,w_0}_{n-1},w^{x,w_0}_{n})
\mathbf{P}\left(\hat{W}_{N-M}=w_0,\:\max_{1\leq n\leq {N-M}}\hat{W}_n\leq K\:\vline\:\hat{W}_{0}=w^{x,w_0}_M\right)}{(K+1)\mathbf{P}\left(\hat{W}_N=\hat{W}_0,\:\max_{1\leq n\leq N}\hat{W}_n\leq K\right)},
\end{eqnarray*}
where $P_W$ is the transition matrix of $\hat{W}$, as given by \eqref{Wprobs}. Similarly decomposing the numerator, this equals
\begin{equation}\label{aaa}
\frac{\sum_{w_0=0}^K\prod_{n=1}^MP_W(w^{x,w_0}_{n-1},w^{x,w_0}_{n})
\mathbf{P}\left(\hat{W}_{N-M}=w_0,\:\max_{1\leq n\leq {N-M}}\hat{W}_n\leq K\:\vline\:\hat{W}_{0}=w^{x,w_0}_M\right)}{\sum_{w_0=0}^K\mathbf{P}\left(\hat{W}_N=w_0,\:\max_{1\leq n\leq N}\hat{W}_n\leq K\:\vline\:\hat{W}_{0}=w_0\right)}.
\end{equation}
Now, applying \cite[Proposition 1]{GT}, we have that
\[\mathbf{P}\left(\hat{W}_{N-M}=w_0,\:\max_{1\leq n\leq {N-M}}\hat{W}_n\leq K\:\vline\:\hat{W}_{0}=w^{x,w_0}_M\right)\sim \frac{\lambda_K^{N-M}h_K(w^{x,w_0}_M)\tilde{\pi}^{(K)}_{w_0}}{h_K(w_0)},\]
where we have applied the notation of Subsection \ref{boundedsec}, and similarly
\[\mathbf{P}\left(\hat{W}_{N}=w_0,\:\max_{1\leq n\leq {N}}\hat{W}_n\leq K\:\vline\:\hat{W}_{0}=w_0\right)\sim
\lambda_K^{N}\tilde{\pi}^{(K)}_{w_0}.\]
It follows that \eqref{aaa} converges as $N\rightarrow\infty$ to
\begin{eqnarray*}
\lefteqn{\sum_{w_0=0}^K\tilde{\pi}^{(K)}_{w_0}\lambda_K^{-M}h_K(w^{x,w_0}_M)h_K(w_0)^{-1}\prod_{n=1}^MP_W(w^{x,w_0}_{n-1},w^{x,w_0}_{n})}\\
&=&\sum_{w_0\in\mathbb{Z}_+}\tilde{\pi}^{(K)}_{w_0}\prod_{n=1}^M\tilde{P}^{(K)}(w^{x,w_0}_{n-1},w^{x,w_0}_{n})\\
&=&\mathbf{P}\left(\left(\tilde{W}^{(K)}_n\right)_{n=1}^M=\left(w^{x,\tilde{W}^{(K)}_0}_n\right)_{n=1}^M\right)\\
&=&\mathbf{P}\left(\left(\tilde{\eta}^{(K)}_n\right)_{n=1}^M=\left(x_n\right)_{n=1}^M\right).
\end{eqnarray*}
In order to complete the proof, we need to show the same limit when the $S_N>0$ conditioning is reintroduced. To this end, first suppose $\tilde{\eta}^{N,b}$ is a random configuration chosen such that $\mathbf{P}((\tilde{\eta}^{N,b}_n)_{n=1}^N=(x_n)_{n=1}^N)$ is given by \eqref{bbb} (with $M=N$), so that ${\eta}^{N,b}$ has the law of $\tilde{\eta}^{N,b}$ conditioned on $\sum_{n=1}^N(1-2\tilde{\eta}^{N,b}_n)>0$. Moreover, observe that, for any $M\in\mathbb{N}$,
\begin{eqnarray*}
\limsup_{N\rightarrow\infty}\mathbf{P}\left(\sum_{n=1}^N(1-2\tilde{\eta}^{N,b}_n)\leq 0\right)&\leq & \limsup_{N\rightarrow\infty}\mathbf{P}\left(\sum_{n=1}^M(1-2\tilde{\eta}^{N,b}_n)
\leq K\right)\\
&=&\mathbf{P}\left(\sum_{n=1}^M(1-2\tilde{\eta}^{(K)}_n)\leq K\right),
\end{eqnarray*}
and, by Corollary \ref{boundedcor}, the final expression here can be made arbitrarily small by choosing $M$ large. Hence, in conjunction with the previous part of the proof, we obtain that
\begin{eqnarray*}
\mathbf{P}\left(\left({\eta}^{N,b}_n\right)_{n=1}^M=\left(x_n\right)_{n=1}^M\right)
&=&\mathbf{P}\left(\left(\tilde{\eta}^{N,b}_n\right)_{n=1}^M=\left(x_n\right)_{n=1}^M\:\vline\:\sum_{n=1}^N(1-2\tilde{\eta}^{N,b}_n)>0\right)\\
&\rightarrow&\mathbf{P}\left(\left(\tilde{\eta}^{(K)}_n\right)_{n=1}^M=\left(x_n\right)_{n=1}^M\right),
\end{eqnarray*}
as desired.
\end{proof}

In the final result of this section, we demonstrate that if we take the infinite volume limit in the periodic i.i.d.\ initial configuration (Example \ref{periidex}) for a parameter $\beta_0\leq0$ (corresponding to $p\geq \frac12$), then the limit is independent of the particular parameter chosen, being equal to the configuration consisting of i.i.d.\ Bernoulli parameter $\frac12$ random variables. Note that, whilst the latter configuration can be thought of as lying on the boundary of a collection of random configurations that are invariant for $T$, the two-sided dynamics are not even defined in this case (since obviously $M_0=\infty$). Moreover, we observe that its density is critical, in the sense that any infinite volume limit of a periodic Gibbs measure can be no greater than $\frac12$. Whilst we do not pursue this point further, we expect similar phenomena for other choices of parameter $(\beta_k)_{k\geq 0}$ that, beyond the $S_N>0$ restriction, favour configurations of density greater than or equal to $\frac12$.

\begin{prop}\label{infvollimhighdens} Let $p\geq \frac12$, and $\eta^{N,iid}$ be the periodic i.i.d.\ configuration of Example \ref{periidex} (i.e.\ with law given by \eqref{iidexp}). Then
\[\eta^{N,iid}\buildrel{d}\over\rightarrow\eta^{\frac12}\]
as $N\rightarrow\infty$, where $\eta^{\frac12}$ is the i.i.d.\ configuration of Subsection \ref{iidsec} with $p=\frac12$.
\end{prop}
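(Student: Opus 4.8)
The plan is to build on the characterisation \eqref{iidexp}, which identifies the law of $(\eta^{N,iid}_n)_{n=1}^N$ with that of an i.i.d.\ Bernoulli($p$) sequence $\eta$ conditioned on $\{S_N>0\}=\{\sum_{n=1}^N\eta_n<N/2\}$. As convergence in distribution of the (cyclically extended) periodic configurations is equivalent to convergence of all finite-dimensional marginals, and these reduce by the spatial stationarity of Lemma~\ref{etalem} to windows of the form $(\eta_n)_{n=1}^M$, the proposition amounts to showing that, for each fixed $M\in\mathbb{N}$ and $x=(x_n)_{n=1}^M\in\{0,1\}^M$ with $k:=\sum_{n=1}^Mx_n$,
\[\mathbf{P}\left((\eta_n)_{n=1}^M=x\mid \sum_{n=1}^N\eta_n<N/2\right)\longrightarrow 2^{-M}\]
as $N\to\infty$, i.e.\ that the limiting marginal is that of i.i.d.\ Bernoulli($\tfrac12$) (cf.\ the remark preceding the proposition regarding the status of $\eta^{\frac12}$).

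First I would decompose over the value of $m:=\sum_{n=1}^N\eta_n$, writing the conditional probability as $\sum_m\mathbf{P}((\eta_n)_{n=1}^M=x\mid \sum_{n=1}^N\eta_n=m)\,\mathbf{P}(\sum_{n=1}^N\eta_n=m\mid \sum_{n=1}^N\eta_n<N/2)$. By exchangeability, the first factor does not depend on $p$ and equals the hypergeometric probability of seeing the pattern $x$ in the first $M$ of $N$ bits of which exactly $m$ are ones; writing $m=\lfloor\alpha N\rfloor$, a routine falling-factorial estimate shows this converges to $\alpha^k(1-\alpha)^{M-k}$, uniformly over $\alpha$ in any compact sub-interval of $(0,1)$, and hence converges to $2^{-M}$ uniformly over all $m$ with $|m/N-\tfrac12|\leq\epsilon$ (letting $N\to\infty$ and then $\epsilon\to0$). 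Since the first factor is in any case bounded by $1$, the result will follow by a triangle-inequality split of the sum into the ranges $|m/N-\tfrac12|\leq\epsilon$ and $|m/N-\tfrac12|>\epsilon$, provided we can show that the conditional law of $m/N$ concentrates at $\tfrac12$, i.e.\ $\mathbf{P}(|\frac1N\sum_{n=1}^N\eta_n-\tfrac12|\geq\epsilon\mid \sum_{n=1}^N\eta_n<N/2)\to0$ for every $\epsilon>0$.

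For this concentration step the argument splits according to whether $p=\tfrac12$ or $p>\tfrac12$. When $p=\tfrac12$, the conditioning event $\{\sum_{n=1}^N\eta_n<N/2\}$ has probability bounded away from $0$ (indeed converging to $\tfrac12$ by symmetry, since $\mathbf{P}(\sum_{n=1}^N\eta_n=N/2)\to0$), so conditioning on it inflates the probability of the vanishing event $\{|\frac1N\sum_{n=1}^N\eta_n-\tfrac12|\geq\epsilon\}$ (which tends to $0$ by the law of large numbers) by at most a bounded factor, and concentration is immediate. When $p>\tfrac12$, the conditioning is onto a large-deviation event; here $\{\sum_{n=1}^N\eta_n<N/2\}$ already forces $\frac1N\sum_{n=1}^N\eta_n<\tfrac12$, so it suffices to rule out $\sum_{n=1}^N\eta_n\leq(\tfrac12-\epsilon)N$, and by Cram\'er's theorem (or direct binomial tail bounds) $\mathbf{P}(\sum_{n=1}^N\eta_n\leq(\tfrac12-\epsilon)N)$ decays exponentially at a strictly larger rate than $\mathbf{P}(\sum_{n=1}^N\eta_n<N/2)$, because the Cram\'er rate function $I(\cdot\mid p)$ is strictly decreasing on $[0,p]$; thus their ratio, which bounds the relevant conditional probability, tends to $0$.

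I expect the only genuine obstacle — and even this is modest — to be the $p>\tfrac12$ case of the concentration step, where one is comparing two exponentially small probabilities and must invoke a true large-deviation estimate rather than just the law of large numbers. An alternative, more computational route would bypass the decomposition and instead expand the conditional probability directly as $p^k(1-p)^{M-k}$ times the ratio $\mathbf{P}(\mathrm{Bin}(N-M,p)<N/2-k)/\mathbf{P}(\mathrm{Bin}(N,p)<N/2)$, evaluating this ratio via Stirling/local-limit asymptotics for binomial coefficients near $N/2$ (using, for $p>\tfrac12$, that each such tail sum is dominated up to a constant by its top term, as consecutive summands there decay at rate $(1-p)/p<1$); this also works but demands fussier bookkeeping of floor/ceiling terms and polynomial corrections, so I would favour the decomposition argument.
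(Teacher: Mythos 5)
Your proposal is correct and follows essentially the same route as the paper: for $p>\tfrac12$ the paper likewise decomposes over the value of $S_N$ (equivalently over $m=\sum_{n=1}^N\eta_n$), uses exchangeability to reduce the pattern probability to a hypergeometric factor converging uniformly to $2^{-M}$ on $\{|m/N-\tfrac12|\le\epsilon\}$, and invokes Cram\'er's theorem to kill the contribution from $|m/N-\tfrac12|>\epsilon$. The only (immaterial) divergence is at $p=\tfrac12$, where the paper argues directly via the ratio $\mathbf{P}(S_{N-M}+\sum_{n=1}^M(1-2x_n)>0)/\mathbf{P}(S_N>0)\rightarrow1$ using the central limit theorem, whereas you fold this case into the same decomposition with a law-of-large-numbers-plus-bounded-conditioning concentration step; both are valid.
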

\begin{proof} We first deal with the case when $p=\frac12$. For this parameter choice, we have that
\begin{eqnarray*}
\mathbf{P}\left((\eta^{N,iid}_n)_{n=1}^{M}=(x_n)_{n=1}^{M}\right)&=&\frac{\mathbf{P}\left((\eta^{\frac12}_n)_{n=1}^{M}=(x_n)_{n=1}^{M}\right)\mathbf{P}\left(S_{N-M}+\sum_{n=1}^M(1-2x_n)>0\right)}{\mathbf{P}\left(S_N>0\right)}\\
&\rightarrow&\mathbf{P}\left((\eta^{\frac12}_n)_{n=1}^{M}=(x_n)_{n=1}^{M}\right),
\end{eqnarray*}
where in the above $S$ is the path encoding of $\eta^\frac12$, and the limit is a ready consequence of the fact that ${N}^{-1/2}S_N$ converges in distribution to a standard normal as $N\rightarrow\infty$.

We now consider the case when $p>\frac12$. Conditioning on the value of $S_N$, we have that
\begin{eqnarray*}
\lefteqn{\mathbf{P}\left((\eta^{N,iid}_n)_{n=1}^{M}=(x_n)_{n=1}^{M}\right)}\\
&=&\sum_{k>0}\mathbf{P}\left((\eta^{\frac12}_n)_{n=1}^{M}=(x_n)_{n=1}^{M}\:\vline\:S_N=k\right)
\mathbf{P}\left(S_N=k\:\vline\:S_N>0\right)\\
&=&\sum_{k>0}\frac{(N-M)!\left(\frac{N-k}{2}\right)!\left(\frac{N+k}{2}\right)!}{
N!\left(\frac{N-k}{2}-\sum_{n=1}^Mx_n\right)!\left(\frac{N+k}{2}-M+\sum_{n=1}^Mx_n\right)!}\mathbf{P}\left(S_N=k\:\vline\:S_N>0\right),
\end{eqnarray*}
where the summands should be interpreted as $0$ wherever the arguments of the terms involving factorials are not all non-negative integers. We next note that Cramer's theorem for an i.i.d.\ sequence (e.g.\ \cite[Theorem 2.2.3]{DZ}) yields that, for any $\varepsilon>0$,
\[\mathbf{P}\left(S_N>\varepsilon N\:\vline\:S_N>0\right)\rightarrow 0.\]
Moreover, straightforward calculations give that, uniformly over the relevant $k\in[0,\varepsilon N]$,
\[2^{-M}(1-\varepsilon)^M
\leq \frac{(N-M)!\left(\frac{N-k}{2}\right)!\left(\frac{N+k}{2}\right)!}{
N!\left(\frac{N-k}{2}-\sum_{n=1}^Mx_n\right)!\left(\frac{N+k}{2}-M+\sum_{n=1}^Mx_n\right)!}\leq
2^{-M}(1+\varepsilon)^M\left(\frac{1}{1-\frac{M-1}{N}}\right)^M.\]
It thus follows that
\[\mathbf{P}\left((\eta^{N,iid}_n)_{n=1}^{M}=(x_n)_{n=1}^{M}\right)\rightarrow 2^{-M}=\mathbf{P}\left((\eta^{\frac12}_n)_{n=1}^{M}=(x_n)_{n=1}^{M}\right),\]
as desired.
\end{proof}

\section{Continuous invariant measures}\label{contsec}

In \cite{CKSS}, a continuous state space version of the BBS was formulated to describe scaling limits of the discrete system. This was based on a two-sided version of Pitman's transformation for continuous functions, which had been studied previously in the probabilistic literature, particularly in the context of queuing (see, for example, \cite{OCY}). The main example given in \cite{CKSS} was the two-sided Brownian with drift (this is recalled in Subsection \ref{BMsec}), which had previously been shown to be invariant for Pitman's transformation in \cite{HW}. Here we further show that the zigzag process, which also appears in the queueing literature \cite{HMOC}, naturally arises as a limit of the Markov initial configuration, see Subsection \ref{zzsec}. Whilst it is possible to check that Brownian motion and the zigzag process are both invariant under Pitman's transformation directly, our approach is to deduce the latter results by establishing that the processes in question are scaling limits of discrete systems, and showing that the invariance under $T$ transfers to the limit. In addition to the examples already mentioned, we follow this line of argument for the periodic models described in Examples \ref{periidex} and \ref{permarex}, see Subsections \ref{perbmsec} and \ref{perzzsec}, respectively. We also discuss continuous versions of the bounded soliton examples of Subsection \ref{boundedsec} and Example \ref{perboundedex} in Subsection \ref{contbounded}.

Prior to introducing the specific models, let us summarise the scaling approach we will use. The following assumption describes the framework in which we are working.

\begin{assu}\label{sassu} It holds that $\eta^\varepsilon=(\eta^\varepsilon_n)_{n\in \mathbb{Z}}$, $\varepsilon>0$, is a collection of random configurations such that
\begin{equation}\label{invarassu}
T\eta^\varepsilon\buildrel{d}\over{=}\eta^\varepsilon
\end{equation}
for each $\varepsilon>0$. The corresponding path encodings $S^\varepsilon$, $\varepsilon>0$, satisfy
\begin{equation}\label{scaling}
\left(a_\varepsilon S^\varepsilon_{t/b_\varepsilon}\right)_{t\in\mathbb{R}}\buildrel{d}\over{\rightarrow} \left(S_t\right)_{t\in\mathbb{R}},
\end{equation}
in $C(\mathbb{R},\mathbb{R})$, where: $(a_\varepsilon)_{\varepsilon>0}$ and $(b_\varepsilon)_{\varepsilon>0}$ are deterministic sequences in $(0,\infty)$; $S^\varepsilon$ is extended to an element of $C(\mathbb{R},\mathbb{R})$ by linear interpolation; and $S$ is a random element of $C(\mathbb{R},\mathbb{R})$. Moreover, for any $t\in\mathbb{R}$, it holds that
\begin{equation}\label{cond1}
\lim_{s \to -\infty} \limsup_{\varepsilon\rightarrow 0} \mathbf{P}\left(M^{\varepsilon}_{s/b_{\varepsilon}} > S^{\varepsilon}_{t/b_\varepsilon}\right)=0,
\end{equation}
and
\begin{equation}\label{cond2}
\lim_{s \to -\infty} \mathbf{P}\left(M_{s} > S_{t}\right)=0,
\end{equation}
where $M^{\varepsilon}$ and $M$ are the past maximum processes associated with $S^{\varepsilon}$ and $S$, respectively.
\end{assu}

We note that the conditions at \eqref{cond1} and \eqref{cond2} ensure the simultaneous convergence of the rescaled past maximum processes with the convergence of path encodings given at \eqref{scaling}, and as a consequence we obtain the following result concerning the invariance under $T$ of the limiting path encoding.

\begin{prop}[cf.\ {\cite[Lemma 5.11]{CKSS}}]\label{invarlimit} If Assumption \ref{sassu} holds, then
\[TS\buildrel{d}\over{=}S.\]
\end{prop}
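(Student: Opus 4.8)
The strategy is to push the identity in law $T\eta^{\varepsilon}\buildrel{d}\over{=}\eta^{\varepsilon}$ through the scaling limit. The first observation is that $TS^{\varepsilon}$ is precisely the path encoding of $T\eta^{\varepsilon}$ (this is how $T$ on configurations is defined via \eqref{teta}, and note $(TS^{\varepsilon})_{0}=2M^{\varepsilon}_{0}-S^{\varepsilon}_{0}-2M^{\varepsilon}_{0}=0$, so $TS^{\varepsilon}\in\mathcal{S}^{0}$; here we use that \eqref{invarassu} tacitly requires $M^{\varepsilon}_{0}<\infty$ a.s., so $T\eta^{\varepsilon}$ makes sense). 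Hence \eqref{invarassu} says that $TS^{\varepsilon}\buildrel{d}\over{=}S^{\varepsilon}$ as random elements of $\mathcal{S}^{0}$, and, since the rescaling $f\mapsto (a_{\varepsilon}f_{t/b_{\varepsilon}})_{t\in\mathbb{R}}$ (with linear interpolation) is a fixed measurable map, we get $\widehat{TS}^{\varepsilon}\buildrel{d}\over{=}\widehat{S}^{\varepsilon}$ for each $\varepsilon$, where $\widehat{S}^{\varepsilon}:=(a_{\varepsilon}S^{\varepsilon}_{t/b_{\varepsilon}})_{t\in\mathbb{R}}$ and $\widehat{TS}^{\varepsilon}:=(a_{\varepsilon}(TS^{\varepsilon})_{t/b_{\varepsilon}})_{t\in\mathbb{R}}$. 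Thus it suffices to show that $(\widehat{S}^{\varepsilon},\widehat{TS}^{\varepsilon})\buildrel{d}\over{\rightarrow}(S,TS)$ in $C(\mathbb{R},\mathbb{R})^{2}$, for then the equality in law passes to the limit and yields $TS\buildrel{d}\over{=}S$.

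For the joint convergence, write $\widehat{TS}^{\varepsilon}_{t}=2\,a_{\varepsilon}M^{\varepsilon}_{t/b_{\varepsilon}}-\widehat{S}^{\varepsilon}_{t}-2\,a_{\varepsilon}M^{\varepsilon}_{0}$. The term $\widehat{S}^{\varepsilon}$ converges by \eqref{scaling}, and once one knows that the rescaled past-maximum process $\widehat{M}^{\varepsilon}:=(a_{\varepsilon}M^{\varepsilon}_{t/b_{\varepsilon}})_{t\in\mathbb{R}}$ converges jointly with $\widehat{S}^{\varepsilon}$ to $(M_{t})_{t\in\mathbb{R}}=(\sup_{u\le t}S_{u})_{t\in\mathbb{R}}$, the rest is a routine application of the continuous mapping theorem: the functional $(f,g)\mapsto 2g-f-2g(0)$ is continuous on $C(\mathbb{R},\mathbb{R})^{2}$ (evaluation at $0$ being continuous in the topology of uniform convergence on compacts), so $\widehat{TS}^{\varepsilon}\to 2M-S-2M_{0}=TS$ jointly with $\widehat{S}^{\varepsilon}\to S$. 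Note that \eqref{cond2} guarantees $M_{0}<\infty$ a.s., so $TS$ is well-defined.

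The crux is therefore the convergence of $\widehat{M}^{\varepsilon}$, and this is where \eqref{cond1} and \eqref{cond2} enter. For fixed $s<0$ set $\widehat{M}^{\varepsilon,s}_{t}:=a_{\varepsilon}\sup_{s/b_{\varepsilon}\le m\le t/b_{\varepsilon}}S^{\varepsilon}_{m}$ for $t\ge s$, and $M^{s}_{t}:=\sup_{s\le u\le t}S_{u}$. Since the map sending a path in $C(\mathbb{R},\mathbb{R})$ to its running supremum over $[s,\cdot\,]$ is continuous, the continuous mapping theorem gives $(\widehat{S}^{\varepsilon},\widehat{M}^{\varepsilon,s})\to(S,M^{s})$ jointly for each fixed $s$. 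It remains to remove the truncation. Now $a_{\varepsilon}M^{\varepsilon}_{t/b_{\varepsilon}}=\max\{a_{\varepsilon}M^{\varepsilon}_{s/b_{\varepsilon}},\,\widehat{M}^{\varepsilon,s}_{t}\}$, so $\widehat{M}^{\varepsilon}_{t}\neq\widehat{M}^{\varepsilon,s}_{t}$ only on the event $\{M^{\varepsilon}_{s/b_{\varepsilon}}>\sup_{s\le r\le t}S^{\varepsilon}_{r/b_{\varepsilon}}\}\subseteq\{M^{\varepsilon}_{s/b_{\varepsilon}}>S^{\varepsilon}_{t/b_{\varepsilon}}\}$, whose probability tends to $0$ as $s\to-\infty$, uniformly in small $\varepsilon$, by \eqref{cond1}; the analogous control of $M^{s}_{t}$ versus $M_{t}$ is exactly \eqref{cond2}. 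A standard $3\delta$-argument then yields $\widehat{M}^{\varepsilon}_{t}\to M_{t}$ (jointly with $\widehat{S}^{\varepsilon}\to S$) for each fixed $t$, and hence for every $t$ in a countable dense set. Finally, since $t\mapsto M_{t}$ is continuous and nondecreasing and the rescaled $\widehat{M}^{\varepsilon}$ are nondecreasing, pointwise convergence on a dense set upgrades (by a Polya–Dini type argument for monotone functions) to convergence in $C(\mathbb{R},\mathbb{R})$, completing the proof of $(\widehat{S}^{\varepsilon},\widehat{M}^{\varepsilon})\to(S,M)$ and thus of the proposition.

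The step I expect to be the main obstacle is this last one: the past-maximum process is a supremum over the unbounded half-line $(-\infty,t]$, whereas \eqref{scaling} only supplies uniform convergence on compacts, so the contribution of the remote past must be shown to be asymptotically negligible — this is the sole purpose of the tail hypotheses \eqref{cond1} and \eqref{cond2}. The remaining care is bookkeeping: turning the pointwise-in-$t$ estimates into convergence in $C(\mathbb{R},\mathbb{R})$, which is handled cheaply using the monotonicity of $M^{\varepsilon}$ and $M$ together with the continuity of the limit.
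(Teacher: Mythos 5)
Your argument is correct and follows exactly the strategy the paper indicates (the remark preceding the proposition and the cited \cite[Lemma 5.11]{CKSS}): push $TS^{\varepsilon}\buildrel{d}\over{=}S^{\varepsilon}$ through the limit by establishing joint convergence of the rescaled path and its past-maximum process, with \eqref{cond1} and \eqref{cond2} controlling the contribution of the remote past to the supremum. The identification of the unbounded-half-line supremum as the only non-routine step, and its resolution via truncation plus the tail conditions, is precisely the content of the paper's approach.
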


\subsection{Brownian motion with drift}\label{BMsec} Perhaps the simplest, and most fundamental, (non-trivial) example of a scaling limit for the path encoding of the box-ball system is seen in the high-density regime. Specifically, fix a constant $c>0$, and consider the configuration $\eta^\varepsilon$ generated by an i.i.d.\ sequence of Bernoulli random variables, with parameter
\begin{equation}\label{peps}
p_\varepsilon:=\frac{1-\varepsilon c}{2}.
\end{equation}
(We assume $\varepsilon<c^{-1}$ for the above to make sense.) By Corollary \ref{revcor}, we have that \eqref{invarassu} holds. Moreover, it is an elementary application of the classical invariance principle that \eqref{scaling} holds with $a_\varepsilon=\varepsilon$, $b_\varepsilon=\varepsilon^2$, and $S$ a two-sided Brownian motion with drift $c$, i.e.\
\[S_t=\left\{\begin{array}{ll}
               ct+S^{(1)}_t, & t\geq 0,\\
               ct+S^{(2)}_{-t}, & t<0,
             \end{array}\right.\]
where $S^{(1)}$ and $S^{(2)}$ are independent standard Brownian motions (starting from 0). Also, \eqref{cond1} and \eqref{cond2} were checked as \cite[Lemma 5.12]{CKSS}. Hence Assumption \ref{sassu} is satisfied in this setting, and we conclude from Proposition \ref{invarlimit} the following result.

\begin{prop} If $S$ is a two-sided Brownian motion with drift $c>0$, then $TS\buildrel{d}\over{=}S$.
\end{prop}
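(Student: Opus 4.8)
The plan is to verify that Assumption~\ref{sassu} holds in this setting and then invoke Proposition~\ref{invarlimit}. There are four ingredients to check. First, the invariance condition \eqref{invarassu}: for each admissible $\varepsilon$, the configuration $\eta^\varepsilon$ is an i.i.d.\ Bernoulli sequence with parameter $p_\varepsilon = (1-\varepsilon c)/2 < \tfrac12$, so Corollary~\ref{revcor} applies directly and gives $T\eta^\varepsilon \buildrel{d}\over{=}\eta^\varepsilon$. Second, the scaling limit \eqref{scaling}: the path encoding $S^\varepsilon$ has increments $S^\varepsilon_n - S^\varepsilon_{n-1} = 1 - 2\eta^\varepsilon_n$, which are i.i.d.\ with mean $\mathbf{E}(1-2\eta^\varepsilon_1) = 1 - 2p_\varepsilon = \varepsilon c$ and variance $4p_\varepsilon(1-p_\varepsilon) \to 1$ as $\varepsilon \to 0$. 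With the diffusive scaling $a_\varepsilon = \varepsilon$, $b_\varepsilon = \varepsilon^2$, the centered part converges by Donsker's invariance principle to a two-sided standard Brownian motion, while the drift term $a_\varepsilon \cdot (t/b_\varepsilon) \cdot \varepsilon c = ct$ survives in the limit; this yields convergence in $C(\mathbb{R},\mathbb{R})$ to the two-sided Brownian motion with drift $c$ displayed above (using that the increments on the negative and positive axes are independent to get the two independent Brownian pieces $S^{(1)}, S^{(2)}$).

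Third and fourth, the two past-maximum tightness conditions \eqref{cond1} and \eqref{cond2}: as noted in the excerpt, \eqref{cond1} and \eqref{cond2} were established in \cite[Lemma~5.12]{CKSS}, so I would simply cite that. (For completeness one notes the heuristic: since both $S^\varepsilon$ rescaled and $S$ have strictly positive drift, $S^\varepsilon_{s/b_\varepsilon}$ and $S_s$ drift to $-\infty$ as $s \to -\infty$, so the events $\{M^\varepsilon_{s/b_\varepsilon} > S^\varepsilon_{t/b_\varepsilon}\}$ and $\{M_s > S_t\}$ become negligible; the quantitative estimate uses a one-sided large deviation / reflection bound uniform in $\varepsilon$.) With all four conditions of Assumption~\ref{sassu} verified, Proposition~\ref{invarlimit} immediately gives $TS \buildrel{d}\over{=}S$, which is the claim.

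The only genuinely delicate point is the uniform-in-$\varepsilon$ past-maximum estimate \eqref{cond1}, since it requires control of the supremum of a random walk over a growing time window that is uniform in the parameter $\varepsilon$ — but this is precisely what was done in \cite[Lemma~5.12]{CKSS}, so in the present survey-style exposition it is legitimate to quote it. Everything else is a routine application of Donsker's theorem and the elementary computation of the increment mean and variance. Thus the proof is essentially a bookkeeping exercise assembling the hypotheses of Proposition~\ref{invarlimit}.
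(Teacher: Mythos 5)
Your proposal is correct and follows exactly the route the paper takes: verify the four conditions of Assumption~\ref{sassu} for the i.i.d.\ Bernoulli($p_\varepsilon$) configurations with $p_\varepsilon=(1-\varepsilon c)/2$, using Corollary~\ref{revcor} for \eqref{invarassu}, Donsker's invariance principle for \eqref{scaling} with $a_\varepsilon=\varepsilon$, $b_\varepsilon=\varepsilon^2$, and \cite[Lemma~5.12]{CKSS} for \eqref{cond1} and \eqref{cond2}, then apply Proposition~\ref{invarlimit}. No further comment is needed.
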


\begin{rem}\label{bmcar} In this case, the carrier $W=M-S$ is the stationary version of Brownian motion with drift $-c$, reflected at the origin. In particular, $W_0$ is exponentially distributed with parameter $2c$, so that $\mathbf{E}W_0=(2c)^{-1}$.
\end{rem}

\subsection{Zigzag process}\label{zzsec} It is not difficult to extend the result of the previous section to show that Brownian motion with drift can also be obtained from a more general class of Markov configurations in the high-density limit. In this section, however, we study a different scaling regime for the Markov configurations of Section \ref{markovsec}. Indeed, we will consider the case when the adjacent states are increasingly likely to be the same, and explain how we can see the so-called zigzag process (we take the name from \cite{DMOC}, though there the name was applied to the carrier process $M-S$; our version is also a generalisation of the so-called telegrapher's process \cite{Kac}) as a scaling limit.

Concerning the details, in this section we fix $\lambda_0,\lambda_1>0$, and suppose $\eta^\varepsilon$ is a two-sided stationary Markov chain on $\{0,1\}$ with transition matrix
\begin{equation}\label{Peps}
P_\varepsilon=\left(
      \begin{array}{cc}
         1- \varepsilon \lambda_0 & \varepsilon \lambda_0 \\
         \varepsilon \lambda_1 & 1- \varepsilon \lambda_1 \\
      \end{array}
    \right).
\end{equation}
(We assume $\varepsilon$ is small enough so that the entries of this matrix are strictly positive.) We note that the invariant measure for $\eta^{\varepsilon}$ is independent of $\varepsilon$, being given by
\[\mathbf{P}\left(\eta^\varepsilon_0=1\right)=\frac{\lambda_0}{\lambda_0+\lambda_1},\]
and so to ensure the associated path encoding has distribution supported on $\mathcal{S}^{lin}$, we thus need to assume $\lambda_0<\lambda_1$, as we will do henceforth. From Corollary \ref{markovcor}, we then have that \eqref{invarassu} holds in this setting.

By definition, the numbers of spatial locations for which $\eta^{\varepsilon}$ takes the value $0$ or $1$ before a change are given by geometric random variables with parameters $\varepsilon \lambda_0$ or $\varepsilon \lambda_1$, respectively. Noting that, when multiplied by $\varepsilon$, the latter random variables converge to exponential, parameter $\lambda_0$ or $\lambda_1$, random variables, it is an elementary exercise to check that
\begin{equation}\label{etaconv}
\left(\eta^\varepsilon_{\lfloor t/\varepsilon \rfloor}\right)_{t\in\mathbb{R}}\buildrel{d}\over{\rightarrow} \left(\eta_t\right)_{t\in\mathbb{R}}
\end{equation}
in $D(\mathbb{R},\{0,1\})$, where the limiting process is the two-sided, stationary continuous-time Markov chain on $\{0,1\}$ that jumps from $0$ to $1$ with rate $\lambda_0$, and from $1$ to $0$ with rate $\lambda_1$. As a consequence, we find that \eqref{scaling} holds with $a_\varepsilon=b_{\varepsilon}=\varepsilon$, and the limiting process being given by $S=(S_t)_{t\in\mathbb{R}}$, where
\begin{equation}\label{sdef}
S_t:=\int_0^t(1-2\eta_s)ds;
\end{equation}
this is the zigzag process. Since $\lambda_0<\lambda_1$, it is an elementary to exercise to check that $t^{-1}S_t\rightarrow \frac{\lambda_1-\lambda_0}{\lambda_1+\lambda_0}>0$ as $|t|\rightarrow \infty$, $\mathbf{P}$-a.s., from which \eqref{cond2} readily follows. The remaining condition we need to apply Proposition \ref{invarlimit} is given by the following lemma.

\begin{lem} If $\eta^{\varepsilon}$ are the random configurations described above with $\lambda_0<\lambda_1$, then \eqref{cond1} holds with $b_\varepsilon=\varepsilon$.
\end{lem}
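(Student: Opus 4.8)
The plan is to deduce the estimate from two facts already in place: the scaling limit \eqref{scaling} (with $a_\varepsilon=b_\varepsilon=\varepsilon$ and limit the zigzag process $S$) together with the a.s.\ convergence $t^{-1}S_t\to c_0:=\frac{\lambda_1-\lambda_0}{\lambda_1+\lambda_0}>0$ noted above, and the explicit marginal law of the carrier $W^\varepsilon_0$ given by Lemma \ref{w0pm}, applied with $p_0=\varepsilon\lambda_0$ and $p_1=1-\varepsilon\lambda_1$ (these satisfy $p_0\in(0,1)$, $p_1\in[0,1)$ and $p_0+p_1<1$ for small $\varepsilon$ precisely because $\lambda_0<\lambda_1$). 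Fix $t\in\mathbb{R}$. For $s<0$ and $\varepsilon$ small, the first step is the bound
\[\mathbf{P}\left(M^\varepsilon_{s/\varepsilon}>S^\varepsilon_{t/\varepsilon}\right)\leq \mathbf{P}\left(M^\varepsilon_{s/\varepsilon}>\tfrac{c_0 s}{2\varepsilon}\right)+\mathbf{P}\left(S^\varepsilon_{t/\varepsilon}\leq \tfrac{c_0 s}{2\varepsilon}\right),\]
so that it is enough to show each term vanishes on taking $\limsup_{\varepsilon\to 0}$ followed by $s\to-\infty$.

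For the second term, \eqref{scaling} gives $\varepsilon S^\varepsilon_{t/\varepsilon}\to S_t$ in distribution, so by the portmanteau theorem (for the closed set $\{x\leq c_0 s/2\}$) one gets $\limsup_{\varepsilon\to 0}\mathbf{P}(\varepsilon S^\varepsilon_{t/\varepsilon}\leq c_0 s/2)\leq\mathbf{P}(S_t\leq c_0 s/2)$, which tends to $0$ as $s\to-\infty$ because $S_t$ is a.s.\ finite. For the first term, I would use that, by the definition of the carrier, $M^\varepsilon_n=S^\varepsilon_n+W^\varepsilon_n$ for every $n\in\mathbb{Z}$, and that $M^\varepsilon_{s/\varepsilon}\leq M^\varepsilon_{N_\varepsilon}$ with $N_\varepsilon:=\lceil s/\varepsilon\rceil$ (by monotonicity of the past-maximum process). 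This yields
\[\mathbf{P}\left(M^\varepsilon_{s/\varepsilon}>\tfrac{c_0 s}{2\varepsilon}\right)\leq \mathbf{P}\left(S^\varepsilon_{N_\varepsilon}>\tfrac{3c_0 s}{4\varepsilon}\right)+\mathbf{P}\left(W^\varepsilon_{N_\varepsilon}>\tfrac{c_0|s|}{4\varepsilon}\right).\]
The first summand is treated exactly as above, now using $\varepsilon S^\varepsilon_{N_\varepsilon}\to S_s$ in distribution (as $\varepsilon N_\varepsilon\to s$), the identity $\mathbf{P}(S_s\geq 3c_0 s/4)=\mathbf{P}(s^{-1}S_s\leq 3c_0/4)$ (valid since $s<0$), and the a.s.\ convergence $s^{-1}S_s\to c_0>\tfrac34 c_0$ as $s\to-\infty$, which forces this probability to $0$. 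For the second summand I would invoke that the carrier process $W^\varepsilon$ is stationary (as recalled in Subsection \ref{markovsec}), so that $W^\varepsilon_{N_\varepsilon}\buildrel{d}\over{=}W^\varepsilon_0$, and then compute from Lemma \ref{w0pm} that $\mathbf{E}(W^\varepsilon_0)=\frac{\lambda_0(2-\varepsilon(\lambda_0+\lambda_1))}{\varepsilon(\lambda_0+\lambda_1)(\lambda_1-\lambda_0)}$; in particular $\varepsilon\mathbf{E}(W^\varepsilon_0)$ is bounded by a constant $C=C(\lambda_0,\lambda_1)$ uniformly over small $\varepsilon$, so Markov's inequality gives $\mathbf{P}(W^\varepsilon_0>c_0|s|/(4\varepsilon))\leq 4C/(c_0|s|)$, which is independent of $\varepsilon$ and tends to $0$ as $s\to-\infty$.

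Putting the pieces together, $\limsup_{\varepsilon\to 0}\mathbf{P}(M^\varepsilon_{s/\varepsilon}>S^\varepsilon_{t/\varepsilon})$ is dominated by a quantity tending to $0$ as $s\to-\infty$, which is exactly \eqref{cond1} with $b_\varepsilon=\varepsilon$. The one place requiring genuine care is this last step: since the chains $\eta^\varepsilon$ become increasingly slowly mixing as $\varepsilon\to 0$, a soft argument based only on the convergence of $(\eta^\varepsilon_{\lfloor t/\varepsilon\rfloor})_{t}$ does not obviously control the left tail of $M^\varepsilon_{s/\varepsilon}$ uniformly in $\varepsilon$. What rescues the estimate is that the law of $W^\varepsilon_0$ furnished by Lemma \ref{w0pm} is of geometric type with ratio $\frac{1-\varepsilon\lambda_1}{1-\varepsilon\lambda_0}=1-\varepsilon(\lambda_1-\lambda_0)+o(\varepsilon)$ (escape probability of order $\varepsilon$), making the bound $\varepsilon\mathbf{E}(W^\varepsilon_0)=O(1)$ elementary; everything else is a routine assembly of weak-convergence and law-of-large-numbers facts already recorded in the surrounding text.
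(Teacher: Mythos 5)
Your proof is correct. It rests on the same two pillars as the paper's argument --- the law of large numbers/weak convergence for the rescaled path encoding, and the explicit marginal law of the carrier from Lemma \ref{w0pm} applied with $p_0=\varepsilon\lambda_0$, $p_1=1-\varepsilon\lambda_1$ --- but the implementation differs in a few respects worth recording. The paper first conditions on $\eta^\varepsilon_0$ and splits at a fixed rescaled level $-x$, so that the ``overshoot'' term becomes $\sup_j\mathbf{P}(\varepsilon M^\varepsilon_0>x\,\vline\,\eta^\varepsilon_0=j)$; this it bounds by $Ce^{-(\lambda_1-\lambda_0)x}$ via an exact summation of the geometric tail of $W^\varepsilon_0$ (together with the change-of-measure factor $\frac{\lambda_0+\lambda_1}{\lambda_0}$ to remove the conditioning), and a third limit $x\rightarrow\infty$ is taken at the end. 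You instead split at the level $c_0s/2$, proportional to $s$, use the monotonicity of the past maximum and the identity $M^\varepsilon=S^\varepsilon+W^\varepsilon$ to isolate the carrier at a single lattice site, invoke stationarity of $W^\varepsilon$ to reduce to the marginal $W^\varepsilon_0$, and settle for Markov's inequality with $\varepsilon\mathbf{E}(W^\varepsilon_0)=O(1)$ --- which gives only $O(1/|s|)$ decay but suffices because only the limit $s\rightarrow-\infty$ remains. (Your moment computation checks out, as do the two union bounds and the portmanteau/sign manipulations.) Your route avoids both the auxiliary parameter $x$ and the conditioning on the state at the origin, at the modest cost of computing a first moment rather than reading off a tail; the paper's route yields a quantitatively sharper, exponential-in-$x$ estimate. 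Both constitute complete proofs of \eqref{cond1} with $b_\varepsilon=\varepsilon$.
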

\begin{proof} Applying the Markov property for $\eta^\varepsilon$ it will suffice to show that
\[\lim_{t \to -\infty} \limsup_{\varepsilon\rightarrow 0} \mathbf{P}\left(M^{\varepsilon}_{t/{\varepsilon}} > 0\:\vline\:\eta^{\varepsilon}_0=i\right)=0,\qquad i=0,1.\]
To this end, observe that, for any $x\geq 0$,
\[\mathbf{P}\left(M^{\varepsilon}_{t/{\varepsilon}} > 0\:\vline\:\eta^{\varepsilon}_0=i\right)\leq
\mathbf{P}\left(\varepsilon S^{\varepsilon}_{t/{\varepsilon}} > -x\:\vline\:\eta^{\varepsilon}_0=i\right)+\sup_{j\in\{0,1\}}\mathbf{P}\left(\varepsilon M^{\varepsilon}_{0} > x\:\vline\:\eta^{\varepsilon}_0=j\right).\]
The first term on the right-hand side here is readily checked to converge to
$\mathbf{P}(S_{t} > -x\:\vline\:\eta_0=i)$ as $\varepsilon\rightarrow 0$, and this limit converges to 0 as $t\rightarrow-\infty$. As for the second term, from \eqref{w0pm} we have that
\begin{eqnarray*}
\sup_{j\in\{0,1\}}\mathbf{P}\left(\varepsilon M^{\varepsilon}_{0} > x\:\vline\:\eta^{\varepsilon}_0=j\right)&\leq & \frac{\lambda_0+\lambda_1}{\lambda_0}\mathbf{P}\left(\varepsilon M^{\varepsilon}_{0} > x\right)\\
&\leq & C\varepsilon\sum_{m>x/\varepsilon}\left(\frac{1-\lambda_1\varepsilon}{1-\lambda_0\varepsilon}\right)^{m}\\
&\leq & C\varepsilon\sum_{m>x/\varepsilon}e^{-(\lambda_1-\lambda_0)\varepsilon m}\\
&\leq & Ce^{-(\lambda_1-\lambda_0)x},
\end{eqnarray*}
where $C$ is a constant not depending on $\varepsilon$ that might vary from line to line. This expression can be taken arbitrarily small by choosing $x$ large, and so the proof is complete.
\end{proof}

\begin{prop} If $S$ is the zigzag process with parameters $\lambda_0<\lambda_1$, then $TS\buildrel{d}\over{=}S$.
\end{prop}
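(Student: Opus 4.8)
The plan is to obtain the statement as an immediate corollary of Proposition \ref{invarlimit}, so the real work is simply to check that all the hypotheses of Assumption \ref{sassu} have been assembled in the discussion above. Concretely, I would take $\eta^\varepsilon$ to be the two-sided stationary Markov chain with transition matrix $P_\varepsilon$ at \eqref{Peps}, with $\lambda_0<\lambda_1$ fixed, and $a_\varepsilon=b_\varepsilon=\varepsilon$. There are then four things to verify, three of which have already been discharged in the text. First, \eqref{invarassu}: since $\lambda_0<\lambda_1$ ensures the density $\lambda_0/(\lambda_0+\lambda_1)$ is strictly below $\tfrac12$ (equivalently, the drift $(\lambda_1-\lambda_0)/(\lambda_1+\lambda_0)$ is strictly positive), Corollary \ref{markovcor} applies and gives $T\eta^\varepsilon\buildrel{d}\over{=}\eta^\varepsilon$ for every small $\varepsilon$.

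Second, the scaling limit \eqref{scaling}. Here I would use the convergence \eqref{etaconv} of the rescaled configuration to the two-sided stationary continuous-time Markov chain $\eta$ in $D(\mathbb{R},\{0,1\})$ — which follows because the geometric holding times with parameters $\varepsilon\lambda_i$, multiplied by $\varepsilon$, converge to exponential holding times with rates $\lambda_i$ — together with the observation that the linearly interpolated, rescaled path encoding $\varepsilon S^\varepsilon_{t/\varepsilon}$ differs from the running integral $\int_0^t(1-2\eta^\varepsilon_{\lfloor s/\varepsilon\rfloor})\,ds$ by an interpolation error that is uniformly $O(\varepsilon)$ on compacts. Passing to the limit through the continuous integration map then yields the limit $S_t=\int_0^t(1-2\eta_s)\,ds$ of \eqref{sdef}, i.e. the zigzag process, and the convergence holds in $C(\mathbb{R},\mathbb{R})$ as required.

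Third and fourth, the two tail conditions. Condition \eqref{cond2} is immediate from the elementary strong law $t^{-1}S_t\to(\lambda_1-\lambda_0)/(\lambda_1+\lambda_0)>0$, $\mathbf{P}$-a.s., as $|t|\to\infty$: this forces $S_t\to+\infty$, so for any fixed $t$ the past maximum $M_s=\sup_{u\le s}S_u$ eventually (as $s\to-\infty$) lies below the level $S_t$. Condition \eqref{cond1} is exactly the content of the lemma proved immediately above the statement: via the Markov property it reduces to a conditional estimate at time $0$, dominating $M^\varepsilon_{t/\varepsilon}$ by a bulk term controlled through the weak convergence of $\varepsilon S^\varepsilon$ and a boundary term $\varepsilon M^\varepsilon_0$ controlled through the explicit marginal law of Lemma \ref{w0pm} and a geometric-series bound. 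With all four ingredients in place, Assumption \ref{sassu} holds in this setting, and Proposition \ref{invarlimit} delivers $TS\buildrel{d}\over{=}S$.

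As for the main obstacle: in the present write-up there is essentially none, since every piece has been prepared — the proof is a one-line appeal to Proposition \ref{invarlimit}. If one wished to argue from scratch, the only genuinely delicate point would be the joint convergence $(S^\varepsilon,M^\varepsilon)\buildrel{d}\over{\to}(S,M)$, hence $TS^\varepsilon\buildrel{d}\over{\to}TS$: convergence of $S^\varepsilon$ alone does not upgrade automatically to convergence of the past-maximum (the supremum over an unbounded half-line is not continuous on $C(\mathbb{R},\mathbb{R})$), and it is precisely the uniform tightness encoded in \eqref{cond1}–\eqref{cond2} that rescues this. That upgrade, however, is exactly what Proposition \ref{invarlimit} (cf.\ \cite[Lemma 5.11]{CKSS}) packages, so we simply invoke it.
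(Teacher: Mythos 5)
Your proposal is correct and follows exactly the route of the paper: verify the four ingredients of Assumption \ref{sassu} — invariance of $\eta^\varepsilon$ via Corollary \ref{markovcor}, the scaling limit \eqref{scaling} from \eqref{etaconv}, condition \eqref{cond2} from the strong law, and condition \eqref{cond1} from the preceding lemma — and then invoke Proposition \ref{invarlimit}. Your closing remark about why the joint convergence of the past-maximum processes is the genuine content being packaged by Proposition \ref{invarlimit} is also consistent with the paper's discussion following Assumption \ref{sassu}.
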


\begin{rem} In this case, the carrier $W=M-S$ is a stationary, non-Markov process. It is possible to compute its marginal distribution by taking the appropriate scaling limit of the distribution given in Lemma \ref{w0pm}, yielding
\[W_0\sim\frac{\lambda_1-\lambda_0}{\lambda_1+\lambda_0}\delta_0+\frac{2\lambda_0}{\lambda_1+\lambda_0}\mathrm{Exp}(\lambda_1-\lambda_0),\]
where $\delta_0$ is the probability measure placing all its mass at $0$, and $\mathrm{Exp}(\lambda_1-\lambda_0)$ is the law of an exponential random variable with parameter $\lambda_1-\lambda_0$. In particular, $\mathbf{E}W_0=2\lambda_0(\lambda_1^2-\lambda_0^2)^{-1}$.
\end{rem}

\subsection{Periodic Brownian motion}\label{perbmsec} In this subsection, we describe the periodic version of the scaling argument of Subsection \ref{BMsec}. Let $\eta^{\varepsilon}$ be again an i.i.d.\ sequence of Bernoulli random variables, with parameter $p_\varepsilon$ as given by \eqref{peps}, for some constant $c\in \mathbb{R}$. (Note that we no longer need to assume $c>0$.) Moreover, for $L>0$, set $N_\varepsilon:=\lfloor L/\varepsilon^2\rfloor$, and let $(\eta^{\varepsilon,L}_n)_{n=1}^{N_\varepsilon}$ be a random sequence with law given by that of $(\eta^{\varepsilon}_n)_{n=1}^{N_\varepsilon}$ conditioned on $S^{\varepsilon}_{N_{\varepsilon}}>0$. Extend $\eta^{\varepsilon,L}$ to $(\eta^{\varepsilon,L}_n)_{n\in\mathbb{Z}}$ by cyclic repetition. From Proposition \ref{gibbsinv}, we then have that $T\eta^{\varepsilon,L}\buildrel{d}\over{=}\eta^{\varepsilon,L}$, and so \eqref{invarassu} holds for these random configurations. Moreover, it is straightforward to check that \eqref{scaling} holds, in the sense that the associated path encodings satisfy
\[\left(\varepsilon S^{\varepsilon,L}_{t/\varepsilon^2}\right)_{t\in\mathbb{R}}\buildrel{d}\over{\rightarrow}
\left( S^{L}_{t}\right)_{t\in\mathbb{R}},\]
where $(S^L_t)_{t\in[0,L]}$ has the distribution of the initial segment of two-sided Brownian motion with drift $c$, $(S_t)_{t\in[0,L]}$, conditioned on $S_L>0$, and this definition is extended by cyclic repetition to give a process on $\mathbb{R}$. With the latter definition, it is obvious that $t^{-1}S^{L}_t\rightarrow L^{-1}S_L>0$ as $|t|\rightarrow \infty$, $\mathbf{P}$-a.s., and so \eqref{cond2} holds. As for \eqref{cond1}, we simply note
\begin{eqnarray*}
\lim_{s \to -\infty} \limsup_{\varepsilon\rightarrow 0} \mathbf{P}\left(M^{\varepsilon,L}_{s/\varepsilon^2} > S^{\varepsilon,L}_{t/\varepsilon^2}\right)&=&
\lim_{s \to -\infty} \limsup_{\varepsilon\rightarrow 0} \mathbf{P}\left(M^{\varepsilon,L}_{s/\varepsilon^2} > 0\right)\\
&\leq &\lim_{s \to -\infty} \limsup_{\varepsilon\rightarrow 0} \mathbf{P}\left(\sup_{u\in[0,L]}\varepsilon S^{\varepsilon,L}_{u/\varepsilon^2}+\left\lfloor\frac{s}{L}\right\rfloor \varepsilon S^{\varepsilon,L}_{L/\varepsilon^2} > 0\right)\\
&=&\lim_{s \to -\infty} \mathbf{P}\left(\sup_{u\in[0,L]}S_u^{L}+\left\lfloor\frac{s}{L}\right\rfloor  S^{L}_{L} > 0\right)\\
&=&0,
\end{eqnarray*}
where $M^{\varepsilon,L}$ is the past maximum process associated with $S^{\varepsilon,L}$. Hence Assumption \ref{invarassu} holds, and we obtain the following.

\begin{prop} Fix $L>0$. If $S^L$ is the periodic extension of $(S_t)_{t\in[0,L]}$ conditioned on $S_L>0$, where $S$ is a two-sided Brownian motion with drift $c\in\mathbb{R}$, then $TS^L\buildrel{d}\over{=}S^L$.
\end{prop}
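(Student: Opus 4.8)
The plan is to check that the configurations $\eta^{\varepsilon,L}$ satisfy all the hypotheses of Assumption \ref{sassu} with $a_\varepsilon=\varepsilon$, $b_\varepsilon=\varepsilon^2$, and then conclude by Proposition \ref{invarlimit}. Since, after cyclic extension, $\eta^{\varepsilon,L}$ is exactly the periodic i.i.d.\ configuration of Example \ref{periidex} on a cycle of length $N_\varepsilon=\lfloor L/\varepsilon^2\rfloor$ (conditioning on $S^\varepsilon_{N_\varepsilon}>0$ being the same as the restriction enforced by the indicator $\mathbf{1}_{\{f_0<N/2\}}$ in \eqref{gibbs}), the discrete invariance $T\eta^{\varepsilon,L}\buildrel{d}\over{=}\eta^{\varepsilon,L}$, i.e.\ \eqref{invarassu}, follows immediately from Corollary \ref{gibbsinv}.

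The substantive step is to establish the scaling limit \eqref{scaling}. First I would invoke Donsker's theorem to get $(\varepsilon S^\varepsilon_{t/\varepsilon^2})_{t\in[0,L]}\buildrel{d}\over{\rightarrow}(S_t)_{t\in[0,L]}$, a Brownian motion with drift $c$; because $\mathbf{P}(S_L=0)=0$, the operation of conditioning on a strictly positive endpoint is almost surely continuous at the limit law, so $(\varepsilon S^{\varepsilon,L}_{t/\varepsilon^2})_{t\in[0,L]}\buildrel{d}\over{\rightarrow}(S_t)_{t\in[0,L]}$ conditioned on $\{S_L>0\}$; and finally cyclic repetition is a continuous operation (uniformly on compacts) once the endpoint heights are matched, so \eqref{scaling} holds for the full two-sided, periodically extended processes. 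Condition \eqref{cond2} is then immediate: on the conditioning event $S^L_L=S_L>0$, so periodicity gives $t^{-1}S^L_t\to L^{-1}S^L_L>0$ as $|t|\to\infty$, whence the past maximum $M^L_s$ is finite and tends to $-\infty$ as $s\to-\infty$. Condition \eqref{cond1} is handled by the chain of inequalities displayed just above the statement: one bounds $M^{\varepsilon,L}_{s/\varepsilon^2}$ by $\sup_{u\in[0,L]}\varepsilon S^{\varepsilon,L}_{u/\varepsilon^2}+\lfloor s/L\rfloor\,\varepsilon S^{\varepsilon,L}_{L/\varepsilon^2}$, passes to the limit using \eqref{scaling}, and uses that the $\lfloor s/L\rfloor$-fold multiple of the strictly positive quantity $S^L_L$ drives the expression below $0$ with probability tending to $1$ as $s\to-\infty$. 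With Assumption \ref{sassu} verified, Proposition \ref{invarlimit} yields $TS^L\buildrel{d}\over{=}S^L$.

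I expect the main obstacle to be the commutation, in \eqref{scaling}, of the conditioning $\{S^\varepsilon_{N_\varepsilon}>0\}$ and of the periodic extension with the weak limit, together with the closely linked uniform control of the periodic past maximum required for \eqref{cond1}. Both points rest on the same fact: on the event we condition on, the period increment $S^L_L$ is strictly positive, so the periodically extended path retains an asymptotically linear profile with positive drift, which is precisely what makes $T$ well-defined on it and Pitman's transform continuous at it. The remaining verifications are routine and have essentially been carried out in the discussion preceding the statement.
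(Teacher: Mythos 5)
Your proposal is correct and follows essentially the same route as the paper: identify $\eta^{\varepsilon,L}$ with the periodic i.i.d.\ Gibbs configuration to get \eqref{invarassu} from Corollary \ref{gibbsinv}, verify \eqref{scaling} via Donsker plus continuity of the conditioning on $\{S_L>0\}$ (the paper leaves this as ``straightforward''), check \eqref{cond2} from $S^L_L>0$, and control \eqref{cond1} by the same periodicity bound on the past maximum, before invoking Proposition \ref{invarlimit}. The only addition you make is spelling out why the conditioning commutes with the weak limit (namely $\mathbf{P}(S_L=0)=0$), which is a worthwhile detail the paper omits.
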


\subsection{Periodic zigzag process}\label{perzzsec} The periodic analogue of Subsection \ref{zzsec} is checked similarly to the previous subsection. For $L>0$, set $N_\varepsilon:=\lfloor L/\varepsilon\rfloor$, and let $(\eta^{\varepsilon,L}_n)_{n=1}^{N_\varepsilon}$ be a random sequence with law given by \eqref{cyclicmarkov}, where $N=N_\varepsilon$, and $(\eta_n)_{n=1}^{N_\varepsilon}$ is given by the two-sided stationary Markov chain with transition matrix $P_\varepsilon$ from \eqref{Peps} for some $\lambda_0,\lambda_1>0$. Extending $\eta^{\varepsilon,L}$ to $(\eta^{\varepsilon,L}_n)_{n\in\mathbb{Z}}$ by cyclic repetition, we then have from Proposition \ref{gibbsinv} that $T\eta^{\varepsilon,L}\buildrel{d}\over{=}\eta^{\varepsilon,L}$, and so \eqref{invarassu} holds for these random configurations. Moreover, it is not difficult to deduce from \eqref{etaconv} that
\[\left(\eta^{\varepsilon,L}_{\lfloor t/\varepsilon \rfloor}\right)_{t\in\mathbb{R}}\buildrel{d}\over{\rightarrow} \left(\eta^L_t\right)_{t\in\mathbb{R}}\]
in $D(\mathbb{R},\{0,1\})$, with the law of the $L$-periodic process $\eta^L$ being characterised by
\begin{equation}\label{etal}
\mathbf{E}\left(F\left((\eta^L_t)_{t\in[0,L]}\right)\right)=\frac{\mathbf{E}\left(\nu(\eta_0)^{-1}F\left((\eta_t)_{t\in[0,L]}\right)\:\vline\:\eta_L=\eta_0,\:S_L>0\right)}{\mathbf{E}\left(\nu(\eta_0)^{-1}\:\vline\:\eta_L=\eta_0,\:S_L>0\right)},
\end{equation}
where $1-\nu(0)=\frac{\lambda_0}{\lambda_0+\lambda_1}=\nu(1)$, and $\eta$ is the two-sided, stationary continuous-time Markov chain that appears as a limit in \eqref{etaconv}. It follows that the associated path encodings satisfy
\[\left(\varepsilon S^{\varepsilon,L}_{t/\varepsilon }\right)_{t\in\mathbb{R}}\buildrel{d}\over{\rightarrow} \left(S^L_t\right)_{t\in\mathbb{R}},\]
yielding \eqref{scaling} in this case; the limit process can be seen as a periodic version of the zigzag process with stationary increments. By applying identical arguments to those of the previous subsection, we are also able to confirm \eqref{cond1} and \eqref{cond2} both hold with the appropriate scaling, and we subsequently obtain the following.

\begin{prop} Fix $L>0$. If $S^L$ is the path encoding of the $\eta^L$, as given by \eqref{etal}, for some $\lambda_0,\lambda_1>0$, then $TS^L\buildrel{d}\over{=}S^L$.
\end{prop}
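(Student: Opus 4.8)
The plan is to realise $S^L$ as a scaling limit of discrete BBS-invariant configurations and then apply Proposition~\ref{invarlimit}; the argument runs in parallel to those of Subsections~\ref{zzsec} and~\ref{perbmsec}. Concretely, for $\varepsilon>0$ set $N_\varepsilon:=\lfloor L/\varepsilon\rfloor$, let $(\eta^{\varepsilon,L}_n)_{n=1}^{N_\varepsilon}$ have the law prescribed by \eqref{cyclicmarkov} with $N=N_\varepsilon$ and with $(\eta_n)_{n=1}^{N_\varepsilon}$ the two-sided stationary Markov chain on $\{0,1\}$ with transition matrix $P_\varepsilon$ from \eqref{Peps}, and extend $\eta^{\varepsilon,L}$ to $(\eta^{\varepsilon,L}_n)_{n\in\mathbb{Z}}$ by cyclic repetition. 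By Corollary~\ref{gibbsinv} we have $T\eta^{\varepsilon,L}\buildrel{d}\over{=}\eta^{\varepsilon,L}$, so \eqref{invarassu} holds; it then remains to verify the scaling convergence \eqref{scaling} (with $a_\varepsilon=b_\varepsilon=\varepsilon$ and limit $S^L$) together with \eqref{cond1} and \eqref{cond2}.

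For \eqref{scaling} I would start from \eqref{etaconv}, which gives convergence of the unconditioned rescaled chain $(\eta^\varepsilon_{\lfloor t/\varepsilon\rfloor})_{t\in\mathbb{R}}$ to the two-sided stationary continuous-time Markov chain $(\eta_t)_{t\in\mathbb{R}}$ with rates $\lambda_0,\lambda_1$, and lift it to the conditioned, reweighted law \eqref{etal}. Here the weight $\nu(\eta_0)^{-1}$ is a bounded function of the discrete-valued coordinate $\eta_0$; the pair $(\eta^\varepsilon_0,\eta^\varepsilon_{N_\varepsilon})$ converges jointly with the path to $(\eta_0,\eta_L)$ because the limit is almost surely continuous at the fixed times $0$ and $L$; and $\varepsilon S^{\varepsilon,L}_{N_\varepsilon}\to S_L$. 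The point requiring genuine care is that $\{\eta_L=\eta_0\}\cap\{S_L>0\}$ is a continuity set for the limit law: the map $\omega\mapsto\mathbf{1}_{\{\omega(0)=\omega(L)\}}$ is almost surely continuous under that law (which is carried by paths continuous at $0$ and $L$) and the event has positive probability, while $\mathbf{P}(S_L=0)=0$, which one checks by conditioning on the number $k$ of jumps of $\eta$ in $[0,L]$ — for $k=0$ one has $S_L=\pm L\neq 0$, and for $k\geq 1$ the relation $\int_0^L(1-2\eta_s)\,ds=0$ cuts out a Lebesgue-null set of jump-time configurations. Granting this, the conditioned and reweighted discrete laws converge, and passing the convergence through the increment sums yields $(\varepsilon S^{\varepsilon,L}_{t/\varepsilon})_{t\in\mathbb{R}}\buildrel{d}\over{\to}(S^L_t)_{t\in\mathbb{R}}$ in $C(\mathbb{R},\mathbb{R})$, with $S^L$ the $L$-periodic extension of $\int_0^{\cdot}(1-2\eta^L_s)\,ds$.

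The remaining conditions then follow by the arguments of Subsection~\ref{perbmsec}. For \eqref{cond2}, $L$-periodicity of the increments gives $S^L_t=\lfloor t/L\rfloor S^L_L+O(1)$ with $S^L_L>0$ by construction, hence $t^{-1}S^L_t\to L^{-1}S^L_L>0$ almost surely and $M_s>S^L_t$ fails for $s$ sufficiently negative. For \eqref{cond1}, the $L$-periodicity of $S^{\varepsilon,L}$ together with the conditioning $S^{\varepsilon,L}_{N_\varepsilon}>0$ reduces $\mathbf{P}(M^{\varepsilon,L}_{s/\varepsilon}>S^{\varepsilon,L}_{t/\varepsilon})$ to a probability involving $\sup_{u\in[0,L]}\varepsilon S^{\varepsilon,L}_{u/\varepsilon}$ and $\lfloor s/L\rfloor\,\varepsilon S^{\varepsilon,L}_{L/\varepsilon}$, after which one passes to the limit using \eqref{scaling} and continuity of the supremum functional and sends $s\to-\infty$ using $S_L>0$ — exactly the estimate displayed in Subsection~\ref{perbmsec}. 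With Assumption~\ref{sassu} verified, Proposition~\ref{invarlimit} gives $TS^L\buildrel{d}\over{=}S^L$.

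The step I expect to be the main obstacle is the second one: upgrading the plain weak convergence \eqref{etaconv} to convergence of the $\nu(\eta_0)^{-1}$-reweighted laws conditioned on $\{\eta_L=\eta_0\}\cap\{S_L>0\}$, and in particular ruling out an atom of $S_L$ at the origin so that this conditioning survives in the limit; once that is in place, the rest is a routine transcription of Subsections~\ref{zzsec} and~\ref{perbmsec}.
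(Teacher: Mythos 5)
Your proposal is correct and follows essentially the same route as the paper: realise $S^L$ as the scaling limit of the cyclic Markov configurations of Example \ref{permarex}, invoke Corollary \ref{gibbsinv} for \eqref{invarassu}, lift \eqref{etaconv} to the conditioned, $\nu(\eta_0)^{-1}$-reweighted law to get \eqref{scaling}, check \eqref{cond1} and \eqref{cond2} as in Subsection \ref{perbmsec}, and conclude via Proposition \ref{invarlimit}. The only difference is that you spell out the continuity-set and no-atom-at-zero details that the paper compresses into ``it is not difficult to deduce''.
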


\subsection{Brownian motion conditioned to stay close to its past maximum}\label{contbounded}

In this section, we consider the transfer of the bounded soliton examples of Subsection \ref{boundedsec} and Example \ref{perboundedex} to the continuous setting, starting with the periodic case. Let $L>0$, and $S^L$ be the $L$-periodic Brownian motion with drift $c>0$ of Subsection \ref{perbmsec}. If $W^L=M^L-S^L$ is the associated carrier process and $K>0$, we define $S^{L,K}$ to have law equal to that of $S^L$ conditioned on $\sup_{t\in\mathbb{R}}W^L_t\leq K$. (Note the latter event has strictly positive probability.) We then have the following.

\begin{prop} Fix $L,K>0$. If $S^{L,K}$ is the $L$-periodic Brownian motion with drift $c>0$ conditioned to stay within $K$ of its past maximum (i.e.\ the process described above), then $TS^{L,K}\buildrel{d}\over{=}S^{L,K}$.
\end{prop}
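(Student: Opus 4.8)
\emph{Proof proposal.} The plan is to realise $S^{L,K}$ as a scaling limit of the discrete periodic bounded soliton configurations of Example \ref{perboundedex}, and then to invoke Proposition \ref{invarlimit}. For $\varepsilon>0$ small, take $p=p_\varepsilon:=\frac{1-\varepsilon c}{2}$, $N=N_\varepsilon:=\lfloor L/\varepsilon^2\rfloor$ (as in Subsection \ref{perbmsec}) and $K_\varepsilon:=\lfloor K/\varepsilon\rfloor$, and let $\eta^{\varepsilon,L,K}$ be the periodic bounded soliton configuration of Example \ref{perboundedex} with these parameters. By \eqref{bexp0}, $\eta^{\varepsilon,L,K}$ is equivalently the periodic i.i.d.\ configuration $\eta^{\varepsilon,L}$ of Subsection \ref{perbmsec} further conditioned on the event that the supremum of its cyclic carrier process is at most $K_\varepsilon$. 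By Corollary \ref{gibbsinv} we have $T\eta^{\varepsilon,L,K}\buildrel{d}\over{=}\eta^{\varepsilon,L,K}$, so \eqref{invarassu} holds for each $\varepsilon$, with $a_\varepsilon=\varepsilon$, $b_\varepsilon=\varepsilon^2$.

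The substantive step is to establish the scaling limit \eqref{scaling}, namely $(\varepsilon S^{\varepsilon,L,K}_{t/\varepsilon^2})_{t\in\mathbb{R}}\buildrel{d}\over{\rightarrow}(S^{L,K}_t)_{t\in\mathbb{R}}$ in $C(\mathbb{R},\mathbb{R})$. From Subsection \ref{perbmsec} we already know the rescaled, cyclically-repeated path encoding of $\eta^{\varepsilon,L}$ converges to $S^L$. Write $\Phi(f):=\sup_{t\in\mathbb{R}}\big(\sup_{s\le t}f(s)-f(t)\big)$ for the supremum of the carrier associated with a path $f$; since $\Phi$ is scale-covariant, in the sense that $\Phi(\alpha f(\beta\cdot))=\alpha\Phi(f)$, the extra conditioning in passing from $\eta^{\varepsilon,L}$ to $\eta^{\varepsilon,L,K}$ is (up to a negligible linear-interpolation error) conditioning on $\{\Phi(\varepsilon S^{\varepsilon,L}_{\cdot/\varepsilon^2})\le\varepsilon K_\varepsilon\}$. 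Since $S^L$ is $L$-periodic with $S^L_L>0$ almost surely, $\Phi$ restricted to such paths reduces to a functional of the path on a fixed bounded window, is almost surely continuous at $S^L$, satisfies $\Phi(S^L)=\sup_{t}W^L_t<\infty$ almost surely, and its law has no atom at $K$; combined with $\varepsilon K_\varepsilon\to K$, a standard continuity-set argument then shows that conditioning commutes with the limit, giving \eqref{scaling} with limit $S^{L,K}$. (That $\mathbf{P}(\sup_tW^L_t\le K)>0$, as asserted in the statement, is seen, e.g., from a small-ball estimate, as paths of $S^L$ confined to a narrow tube about $t\mapsto ct$ have carrier bounded by $K$.) Finally, \eqref{cond1} and \eqref{cond2} follow exactly as in Subsection \ref{perbmsec}: $S^{L,K}$ is $L$-periodic with $S^{L,K}_L=S^L_L>0$ almost surely (inherited from $S^L$), so $t^{-1}S^{L,K}_t\to L^{-1}S^{L,K}_L>0$ gives \eqref{cond2}, and the periodicity bound $M^{\varepsilon,L,K}_{s/\varepsilon^2}\le\sup_{u\in[0,L]}\varepsilon S^{\varepsilon,L,K}_{u/\varepsilon^2}+\lfloor s/L\rfloor\,\varepsilon S^{\varepsilon,L,K}_{L/\varepsilon^2}$ together with \eqref{scaling} gives \eqref{cond1}. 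Thus Assumption \ref{sassu} holds, and Proposition \ref{invarlimit} yields $TS^{L,K}\buildrel{d}\over{=}S^{L,K}$.

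The main obstacle is the middle step: showing that the discrete bounded-carrier conditioning encoded by $\mathcal{A}_{N_\varepsilon,K_\varepsilon}$ passes to the continuous conditioning on $\{\sup_tW^L_t\le K\}$ in the limit. This needs care because it is a conditioning on an event depending on the whole limiting path, not on a bounded or finite-dimensional functional, so one must verify both that $\Phi$ is continuous at $\mathbf{P}$-a.e.\ path of $S^L$ (using periodicity and positive drift per period to reduce $\Phi$ to a functional of $S^L$ on a fixed bounded interval) and that $\{\Phi\le K\}$ is a continuity set for the law of $S^L$ — the latter amounting to the absence of an atom of $\sup_tW^L_t$ at $K$, which follows from the absolute-continuity properties of Brownian motion.

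An alternative route avoids scaling limits entirely: by conservation of the soliton-counting quantities (cf.\ \eqref{fkpres}), the supremum of the carrier — equivalently the size of the largest soliton — is invariant under $T$, so $\{\sup_tW^L_t\le K\}$ is, modulo null sets, a $T$-invariant event, and conditioning the already $T$-invariant law of $S^L$ (from Subsection \ref{perbmsec}) on it automatically gives a $T$-invariant measure. The work in this approach is instead in making the $T$-invariance of $\sup_tW^L_t$ rigorous in the continuous setting.
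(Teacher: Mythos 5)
Your proposal is correct and follows essentially the same route as the paper: the paper likewise realises $S^{L,K}$ as the scaling limit of the periodic bounded soliton configurations of Example \ref{perboundedex} with parameters $(\frac{1-\varepsilon c}{2},\lfloor L/\varepsilon^2\rfloor,K/\varepsilon)$, matches the discrete conditioning event $\mathcal{A}_{\varepsilon,L,K}$ against its continuous counterpart $\mathcal{A}_{L,K}$ (your functional $\Phi$ in disguise), and then invokes Corollary \ref{gibbsinv} together with Proposition \ref{invarlimit}. The only difference is that the paper dismisses the convergence of the conditioned laws as ``an elementary exercise,'' whereas you supply the justifying continuity-set argument (a.s.\ continuity of the carrier-supremum functional at $S^L$ and absence of an atom at $K$), which is exactly the content needed there.
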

\begin{proof} Note that $\sup_{t\in\mathbb{R}}W^L_t\leq K$ can alternatively be expressed as
\begin{equation}\label{alk}
\left\{\max_{0\leq t\leq L}\left\{\max_{0\leq s\leq t}(S^L_s-S^L_t),\:\max_{t\leq s\leq L}(S^L_s-S^L_L-S^L_t)\right\}\leq K\right\}.
\end{equation}
Hence, applying the definitions of $S^{L,K}$ and $S^L$, we find that
\begin{equation}\label{123}
\mathbf{E}\left(F\left((S^{L,K}_t)_{t\in[0,L]}\right)\right)=\mathbf{E}\left(F\left((S_t)_{t\in[0,L]}\right)\:\vline\:\mathcal{A}_{L,K},\:S_L>0\right),
\end{equation}
where $\mathcal{A}_{L,K}$ is defined similarly to \eqref{alk}, but with $S^L$ replaced by $S$. This characterisation of the law of $S^{L,K}$ allows us to show that it can be arrived at as the scaling limit of a sequence of discrete models. Indeed, let $S^{\varepsilon,L,K}$ be the periodic bounded soliton configuration of Example \ref{perboundedex} with $(p,N,K)$ being given by $(\frac{1-\varepsilon c}{2},\lfloor L/\varepsilon^2\rfloor,K/\varepsilon)$. From \eqref{bexp0}, we then have that
\begin{equation}\label{456}
\mathbf{E}\left(F\left((S^{\varepsilon,L,K}_n)_{n=0}^{\lfloor L/\varepsilon^2\rfloor}\right)\right)=\mathbf{E}\left(F\left((S^{\varepsilon}_n)_{n=0}^{\lfloor L/\varepsilon^2\rfloor}\right)\:\vline\:\mathcal{A}_{\varepsilon,L,K},\:S^\varepsilon_{\lfloor L/\varepsilon^2\rfloor}>0\right),
\end{equation}
where $S^{\varepsilon}$ is the path encoding of the i.i.d.\ configuration with density $\frac{1-\varepsilon c}{2}$, and
\[\mathcal{A}_{\varepsilon,L,K}=\left\{\max_{0\leq n\leq {\lfloor L/\varepsilon^2\rfloor}}\left\{\max_{0\leq m\leq n}(S^\varepsilon_m-S^\varepsilon_n),\:\max_{n\leq m\leq {\lfloor L/\varepsilon^2\rfloor}}(S^\varepsilon_m-S^\varepsilon_{\lfloor L/\varepsilon^2\rfloor}-S^\varepsilon_n)\right\}\leq K/\varepsilon\right\}.\]
Since $(\varepsilon S^{\varepsilon}_{t/\varepsilon^2})_{t\in\mathbb{R}}\buildrel{d}\over{\rightarrow}S$, it is an elementary exercise to deduce from \eqref{123} and \eqref{456} that
\[\left(\varepsilon S^{\varepsilon,L,K}_{t/\varepsilon^2}\right)_{t\in\mathbb{R}}\buildrel{d}\over{\rightarrow}\left(S^{L,K}_t\right)_{t\in\mathbb{R}},\]
i.e.\ \eqref{scaling} holds with $a_\varepsilon=\varepsilon$, $b_\varepsilon=\varepsilon^2$. We also have \eqref{invarassu} from Corollary \ref{gibbsinv}, and \eqref{cond1} and \eqref{cond2} can be checked as in Subsection \ref{perbmsec}. Hence Assumption \ref{sassu} holds, and Proposition \ref{invarlimit} yields the result.
\end{proof}

The non-periodic version of the previous result is more of a challenge, and we do not prove it here. Rather we describe a potential proof strategy. Firstly, recall from Remark \ref{bmcar} that the carrier process $W=M-S$ associated with Brownian motion with drift $c>0$ is the stationary version of Brownian motion with drift $-c$, reflected at the origin. By applying \cite[Section 4]{GT}, it is possible to define a stationary Markov process ${W}^{K}$ that can be interpreted as $W$ conditioned on $\sup_{t\in\mathbb{R}}W_t\leq K$ (cf.\ the discussion for reflecting Brownian motion without drift in \cite[Section 7]{GT2}). Letting $L^K$ be the local time at 0 of this process, with boundary condition $L^K_0=0$, then, by analogy with the unconditioned case, set $S^{K}=L^K-W^K+W^K_0$. We expect that this process, which one might interpret as Brownian motion with drift $c>0$ conditioned to stay within $K$ of its past maximum, can alternatively be obtained as a scaling limit of the path encodings of the random configurations described in Subsection \ref{boundedsec}, and make the following conjecture.

\begin{conj} Fix $K>0$. If $S^{K}$ is the Brownian motion with drift $c>0$ conditioned to stay within $K$ of its past maximum (in the sense described above), then $TS^{K}\buildrel{d}\over{=}S^{K}$.
\end{conj}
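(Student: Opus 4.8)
The plan is to prove the conjecture by adapting the scaling-limit scheme of Assumption \ref{sassu}, taking as approximating discrete models the bounded soliton configurations $\tilde\eta^{(K_\varepsilon)}$ of Subsection \ref{boundedsec} with $p=p_\varepsilon=\frac{1-\varepsilon c}{2}$ and cut-off $K_\varepsilon=\lfloor K/\varepsilon\rfloor$. By Corollary \ref{boundedcor} each such configuration satisfies \eqref{invarassu}, so the task reduces to establishing the convergence \eqref{scaling} of the rescaled path encodings $\varepsilon S^{\varepsilon}_{t/\varepsilon^2}$ to the process $S^K$ described before the conjecture, together with the uniform past-maximum controls \eqref{cond1} and \eqref{cond2}; Proposition \ref{invarlimit} then delivers the result. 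The point \eqref{cond2} should be easy, since $S^K=L^K-W^K+W^K_0$ has $W^K$ bounded by $K$ and $L^K$ increasing at a positive rate (inherited from the drift $c>0$), so $M_t-S_t=W^K_t\le K$ forces $M_s>S_t$ to fail for $s$ sufficiently negative, deterministically.

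The heart of the argument is the convergence of the rescaled carrier processes. The idea is to work on the carrier side throughout: the discrete carrier $\tilde W^{(K_\varepsilon)}$ is, by construction in Subsection \ref{boundedsec}, the stationary Doob $h$-transform of the substochastic chain $P^{(K_\varepsilon)}$, i.e.\ the random walk with steps $\pm1$ of rates governed by $p_\varepsilon$, reflected at $0$ and killed on exceeding $K_\varepsilon$, conditioned to survive forever. First I would invoke the known diffusion approximation: the unconditioned carrier $W^\varepsilon=M^\varepsilon-S^\varepsilon$ rescales (as in Remark \ref{bmcar}) to reflecting Brownian motion with drift $-c$. Then, citing \cite[Section 4]{GT} for the $h$-transform construction at the discrete level and passing to the limit, one identifies the limit of $\varepsilon\tilde W^{(K_\varepsilon)}_{t/\varepsilon^2}$ as precisely $W^K$, the stationary reflected-drift Brownian motion conditioned to stay below $K$ (equivalently, the $h$-transform of reflecting Brownian motion on $[0,K]$ killed at $K$, as in the discussion of \cite[Section 7]{GT2}). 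The path encoding is recovered via $S^\varepsilon_n=\sum_{m\le n}(\text{local time increments})-W^\varepsilon_n+W^\varepsilon_0$, and the discrete local time at $0$ converges to $L^K$ by standard occupation-time arguments; this yields \eqref{scaling} with $a_\varepsilon=\varepsilon$, $b_\varepsilon=\varepsilon^2$, and limit $S^K$.

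It remains to verify \eqref{cond1}. Here I would argue as in the proof of the zigzag lemma above: using the Markov property of $\tilde W^{(K_\varepsilon)}$, and the fact that $S^\varepsilon_n\ge -\varepsilon^{-1}K + (\text{local time at }0)$, one bounds $\mathbf{P}(M^\varepsilon_{s/\varepsilon^2}>S^\varepsilon_{t/\varepsilon^2})$ by the probability that the rescaled local time accumulated between times $s/\varepsilon^2$ and $t/\varepsilon^2$ is smaller than a fixed amount; since the conditioned carrier $W^K$ has a genuine positive-recurrence/positive-drift structure (the $h$-transform pushes it away from $K$ but the reflection at $0$ plus residual drift guarantees local time grows linearly), this probability tends to $0$ as $s\to-\infty$, uniformly in small $\varepsilon$ by the convergence just established. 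With \eqref{invarassu}, \eqref{scaling}, \eqref{cond1}, \eqref{cond2} all in hand, Assumption \ref{sassu} holds and Proposition \ref{invarlimit} gives $TS^K\stackrel{d}{=}S^K$.

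The main obstacle I anticipate is making the convergence $\varepsilon\tilde W^{(K_\varepsilon)}_{t/\varepsilon^2}\to W^K$ fully rigorous: one must check that the discrete $h$-functions $h_{K_\varepsilon}$ (the Perron–Frobenius eigenvectors of $P^{(K_\varepsilon)}$) converge, after rescaling, to the continuum eigenfunction of the killed reflected-drift generator on $[0,K]$, and that the corresponding eigenvalue $\lambda_{K_\varepsilon}$ satisfies $\lambda_{K_\varepsilon}=1-O(\varepsilon^2)$ with the right constant — a singular-perturbation spectral estimate. Once this spectral convergence is secured, the rest (tightness, identification of the limiting generator, convergence of stationary distributions $\tilde\pi^{(K_\varepsilon)}\propto h_{K_\varepsilon}^2\pi$) follows from standard Markov-chain-to-diffusion machinery, which is why I would only sketch it rather than carry it out in detail here.
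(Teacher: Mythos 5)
This statement is left as a \emph{conjecture} in the paper: the authors explicitly write that the non-periodic version ``is more of a challenge, and we do not prove it here,'' and only describe a potential strategy --- essentially the same one you propose (realise $S^K$ via the Glynn--Thorisson $h$-transform construction for the conditioned reflected carrier, and approximate by the bounded-soliton configurations of Subsection \ref{boundedsec} with $p_\varepsilon=\frac{1-\varepsilon c}{2}$ and $K_\varepsilon\sim K/\varepsilon$). So your plan is aligned with the intended route, but it is not a proof, and the places where it falls short are exactly the places the authors judged hard enough to leave the statement open.

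Concretely, there are two genuine gaps. First, the convergence \eqref{scaling} of $\varepsilon\tilde W^{(K_\varepsilon)}_{t/\varepsilon^2}$ to $W^K$ is asserted modulo the ``singular-perturbation spectral estimate'' that you defer: one must show that the Perron--Frobenius eigenvalues satisfy $\lambda_{K_\varepsilon}=1-c_K\varepsilon^2+o(\varepsilon^2)$ with the correct constant and that the rescaled eigenvectors $h_{K_\varepsilon}(\lfloor x/\varepsilon\rfloor)$ converge to the principal eigenfunction of the killed reflected-drift generator on $[0,K]$, and then that the stationary laws $\tilde\pi^{(K_\varepsilon)}\propto h_{K_\varepsilon}^2\pi$ converge to the stationary law of $W^K$. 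None of this is standard-citation material in the paper's framework; it is the content of the conjecture. Moreover, even granting convergence of the carriers, recovering $S^K=L^K-W^K+W^K_0$ requires joint convergence of the discrete local time at $0$ with the carrier, which needs its own tightness argument. Second, your verification of \eqref{cond1} is essentially circular: you bound the probability by a local-time event and then claim it vanishes ``uniformly in small $\varepsilon$ by the convergence just established,'' but weak convergence of the rescaled processes does not yield a bound that is uniform over $\varepsilon$ as $s\to-\infty$; in the zigzag case the paper obtains the analogous bound from an explicit exponential tail computation for $M^\varepsilon_0$, and here the corresponding quantitative estimate on the conditioned carrier (uniform in $\varepsilon$) is again contingent on the unproven spectral control. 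Until those estimates are supplied, Assumption \ref{sassu} is not verified and Proposition \ref{invarlimit} cannot be invoked; the statement remains a conjecture.
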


\section{Palm measures for the zigzag process and the ultra-discrete Toda lattice}\label{palmsec}

In this section we relate the dynamics of the zigzag process under Pitman's transformation to the dynamics of the ultra-discrete Toda lattice, and use this connection to derive natural invariant measures for the latter. The state of the ultra-discrete Toda lattice is described by a vector $((Q_j)_{j=1}^J,({E}_j)_{j=1}^{J-1})  \in (0,\infty)^{2J-1}$ for some $J\in\mathbb{N}$, and its one-step time evolution by the equation
\begin{align}
(\mathcal{T}Q)_j &:=\min \left\{ \sum_{l=1}^j Q_l-\sum_{l=1}^{j-1}(\mathcal{T}Q)_l,{E}_j \right\},\label{udtodaeq}\\
(\mathcal{T}{E})_j &:=Q_{j+1}+{E}_j-(\mathcal{T}Q)_j,\nonumber
\end{align}
where for the purposes of these equations we suppose ${E}_{J} = \infty$. Similarly to the path encoding of the BBS, we can associate a path $S\in C(\mathbb{R},\mathbb{R})$ to the state of the ultra-discrete Toda lattice $((Q_j)_{j=1}^J,({E}_j)_{j=1}^{J-1})$ by setting $S_t=t$ for $t<0$, and for $t\geq 0$, concatenating path segments of gradient $-1,1,-1,1,\dots,-1,1,-1,1$, of lengths $Q_1,{E}_1,Q_2,{E}_2,\dots,Q_{J-1},{E}_{J-1},Q_{J},{E}_J=\infty$, i.e.\
\begin{equation}\label{abpath}
S_{t}=\begin{cases}
t&\mbox{for }t<0,\\
-t+2\sum_{l=1}^j{E}_l,&\mbox{for }\sum_{l=1}^j Q_l+\sum_{l=1}^j{E}_l\leq t\leq\sum_{l=1}^{j+1}Q_l+\sum_{l=1}^j{E}_l,\\
t-2\sum_{l=1}^{j+1}Q_l,&\mbox{for }\sum_{l=1}^{j+1}Q_l+\sum_{l=1}^j{E}_l\leq t\leq\sum_{l=1}^{j+1}Q_l+\sum_{l=1}^{j+1}{E}_l,
\end{cases}
\end{equation}
where $j=0,\dots,J-1$ (interpreting sums of the form $\sum_{l=1}^0$ as zero), and we again suppose ${E}_{J} = \infty$. As is confirmed by the next proposition we present, the dynamics of the ultra-discrete Toda lattice given by \eqref{udtodaeq} are described by Pitman's transformation applied to this path encoding. However, in this case, it is convenient to shift the path after applying $T$ so that $0$ is still a local maximum. In particular, for $t\in\mathbb{R}$, we define $\theta^tS$ by setting
\begin{equation}\label{thetatdef}
(\theta^tS)_s=S_{t+s}-S_t,\qquad\forall s\in\mathbb{R},
\end{equation}
let
\[\tau(S):=\inf\{t\geq 0:\:t\in \mathrm{LM}(S)\},\]
where $\mathrm{LM}(S)$ is the set of local maxima of $S$ (for the elements of $C(\mathbb{R},\mathbb{R})$ that are considered in this section, $\tau(S)$ is always well-defined and finite), and define
\[\theta^\tau(S):=\theta^{\tau(S)}(S).\]
We then introduce an operator $\mathcal{T}$ on the path encoding by the composition of $T$ and $\theta^\tau$, that is
\begin{equation}\label{tstarpath}
\mathcal{T}S:=\theta^\tau(TS).
\end{equation}
The motivation for this definition is the following. (See Figure \ref{todafig} for a graphical representation of the result.)

\begin{prop}[See {\cite[Theorem 1.1]{CST}}] Fix $J\in\mathbb{N}$. Let $((Q_j)_{j=1}^J,({E}_j)_{j=1}^{J-1})  \in (0,\infty)^{2J-1}$, and $S$ be its path encoding, defined as at \eqref{abpath}. It is then the case that the transformed configuration $(((\mathcal{T}Q)_j)_{j=1}^J,((\mathcal{T}{E})_j)_{j=1}^{J-1})$, defined as at \eqref{udtodaeq}, has path encoding given by $\mathcal{T}S$, defined as at \eqref{tstarpath}.
\end{prop}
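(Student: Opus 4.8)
Although this proposition is precisely \cite[Theorem 1.1]{CST}, let me indicate the route a direct proof takes. The plan is to compute the Pitman transform $TS$ of the path encoding $S$ from \eqref{abpath} explicitly, piece by piece, and then to check that re-centring by $\theta^\tau$ produces exactly the path encoding of $(((\mathcal{T}Q)_j)_{j=1}^J,((\mathcal{T}E)_j)_{j=1}^{J-1})$. The only substantive input is the elementary fact that, for a path built from slope-$\pm1$ segments, the past maximum $M$ is constant along each descending segment, and along each ascending segment equals the larger of $S_t$ and the running maximum of the heights of the earlier local maxima; hence $TS=2M-S$ can be read off corner by corner.

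First I would fix notation: let $t_j:=\sum_{l=1}^j(Q_l+E_l)$ for $j=0,1,\dots$ (so $t_0=0$) be the successive local maxima of $S$ in $[0,\infty)$, let $\mu_j:=S_{t_j}=\sum_{l=1}^j(E_l-Q_l)$ be their heights, and let $\bar\mu_j:=\max_{0\le l\le j}\mu_l$ be the running maximum of the heights. A one-line computation shows that $c_j:=\bar\mu_j-\mu_j$, which equals $M_{t_j}-S_{t_j}=W_{t_j}$, satisfies $c_0=0$ and the Lindley-type recursion $c_j=\max\{c_{j-1}+Q_j-E_j,\,0\}$. Comparing with \eqref{udtodaeq}, this identifies $(\mathcal{T}Q)_j=\min\{c_{j-1}+Q_j,\,E_j\}$ and $(\mathcal{T}E)_j=Q_{j+1}+E_j-(\mathcal{T}Q)_j$, with $c_j$ playing the role of the carrier load handed over from block $j$ to block $j+1$.

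Next I would evaluate $TS$ on each linear piece of $S$. On the $j$-th descending piece (length $Q_j$, along which $S$ falls from $\mu_{j-1}$) one has $M\equiv\bar\mu_{j-1}$, so $TS$ rises with slope $+1$ over a length $Q_j$; in particular $(TS)_t=t$ for all $t\le Q_1$ (combining the tail $t\le0$ with the first descending piece), whence $\tau(TS)=Q_1$ and $(TS)_{Q_1}=Q_1$. On the $j$-th ascending piece (length $E_j$) one has $M_t=\max\{\bar\mu_{j-1},S_t\}$, so $TS$ first falls with slope $-1$ until $S$ reaches the level $\bar\mu_{j-1}$ --- which occurs after a length $\min\{c_{j-1}+Q_j,E_j\}=(\mathcal{T}Q)_j$ --- and then rises with slope $+1$ over the remaining length $E_j-(\mathcal{T}Q)_j$. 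This description is uniform over the cases $c_j=0$ and $c_j>0$ (in the latter the slope-$+1$ part has length zero) and over the final piece, where $E_J=\infty$.

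Finally, I would concatenate the pieces in the order in which they occur along $S$ and merge neighbouring pieces of equal slope: the slope-$+1$ tail of ascending piece $j$ (length $E_j-(\mathcal{T}Q)_j$) merges with the slope-$+1$ piece contributed by descending piece $j+1$ (length $Q_{j+1}$) into a single slope-$+1$ run of length $E_j-(\mathcal{T}Q)_j+Q_{j+1}=(\mathcal{T}E)_j$. Deleting the leading slope-$+1$ run of length $Q_1=\tau(TS)$ by applying $\theta^\tau$ (which also preserves $S_s=s$ for $s<0$ and makes $0$ a local maximum) then leaves precisely the alternating sequence of slopes $-1,+1,-1,+1,\dots$ with lengths $(\mathcal{T}Q)_1,(\mathcal{T}E)_1,(\mathcal{T}Q)_2,(\mathcal{T}E)_2,\dots$, terminating in an infinite slope-$+1$ ray since $E_J=\infty$; by \eqref{abpath} this is the path encoding of $(((\mathcal{T}Q)_j)_{j=1}^J,((\mathcal{T}E)_j)_{j=1}^{J-1})$, as claimed. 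The only step demanding care is this last one: one must handle the degenerate slope-$+1$ segments of length zero (arising when $(\mathcal{T}Q)_j=E_j$, i.e.\ $c_j>0$), the two semi-infinite segments, and possible ties among the heights $\mu_j$, and verify that merging equal-slope pieces together with the shift $\theta^\tau$ reproduces exactly the list $(\mathcal{T}Q)_1,(\mathcal{T}E)_1,\dots$ with the correct starting slope and with the local maximum correctly located at the origin.
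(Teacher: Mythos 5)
Your argument is correct, and it is worth noting that the paper itself offers no proof of this proposition: it is imported wholesale from \cite[Theorem 1.1]{CST} (listed as forthcoming), so your piecewise computation of $TS$ is a genuine, self-contained substitute rather than a variant of an argument in the text. The skeleton is sound: the past maximum is constant on descending segments and equals $\max\{\bar\mu_{j-1},S_t\}$ on ascending ones, which makes $TS$ rise for length $Q_j$ and then fall for length $\min\{c_{j-1}+Q_j,E_j\}$ before rising again, and the merge of adjacent slope-$+1$ runs together with the shift by $\tau(TS)=Q_1$ produces the alternating lengths $(\mathcal{T}Q)_1,(\mathcal{T}E)_1,\dots$ The one step you assert rather than prove is the identification $(\mathcal{T}Q)_j=\min\{c_{j-1}+Q_j,E_j\}$ with the formula \eqref{udtodaeq}: this needs the auxiliary identity $\sum_{l=1}^jQ_l-\sum_{l=1}^{j-1}(\mathcal{T}Q)_l=c_{j-1}+Q_j$, which follows by a short induction using your Lindley recursion $c_j=\max\{c_{j-1}+Q_j-E_j,0\}$ (the inductive step being $d_{j+1}=d_j+Q_{j+1}-\min\{d_j,E_j\}=Q_{j+1}+\max\{d_j-E_j,0\}$ with $d_j:=c_{j-1}+Q_j$); you should write that out, since it is the only place where the analytic description of $TS$ actually meets the algebraic recursion defining $\mathcal{T}$. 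The degenerate cases you flag at the end (zero-length ascending remnants when $c_j>0$, the semi-infinite final segment with $E_J=\infty$, ties among the $\mu_j$) all resolve as you expect because $(\mathcal{T}Q)_j>0$ and $(\mathcal{T}E)_j\geq Q_{j+1}>0$ for every $j$, so the merged path is genuinely alternating and $0$ is a bona fide local maximum of $\mathcal{T}S$.
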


\begin{figure}[t]
\begin{flushleft}
\hspace{60pt}\includegraphics[width=0.5\textwidth]{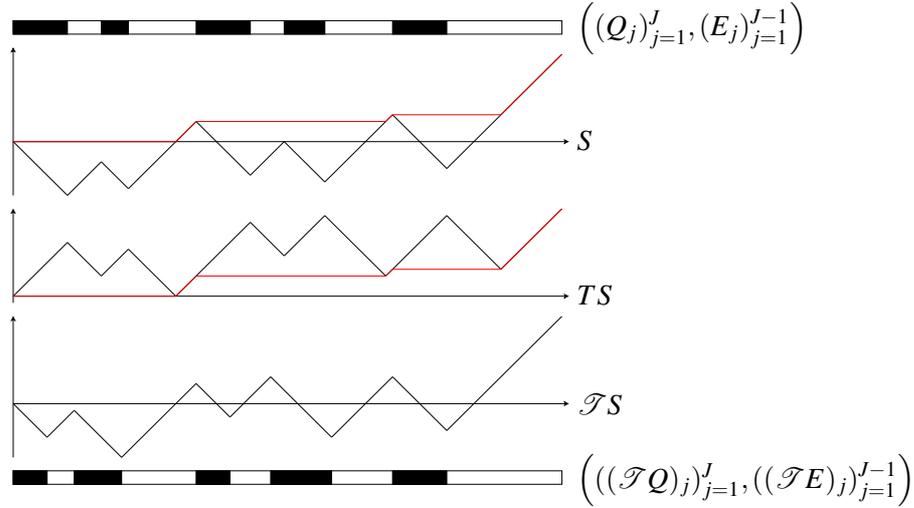}
\rput[tl](0,6.4){$\left((Q_j)_{j=1}^J,({E}_j)_{j=1}^{J-1}\right)$}
\rput[tl](0,4.7){$S$}
\rput[tl](0,2.65){$TS$}
\rput[tl](0,1.2){$\mathcal{T}S$}
\rput[tl](0,0.4){$\left(((\mathcal{T}Q)_j)_{j=1}^J,((\mathcal{T}{E})_j)_{j=1}^{J-1}\right)$}
\end{flushleft}
\caption{Graphical representation of the dynamics of the ultra-discrete Toda lattice in terms of the associated path encodings.  NB. The red line in the graphs for $S$ and $TS$ shows the path of $M$.}\label{todafig}
\end{figure}

Just as for the BBS, the ultra-discrete Toda lattice evolves in a solitonic way. Eventually the configuration orders itself so that $Q_J\geq Q_{J-1}\geq\dots\geq Q_1$, and these quantities -- which can be thought of as representing intervals where particles are present -- remain constant, whilst the $({E}_j)_{j=1}^{J-1}$ -- which can be thought of as representing the gaps between blocks of particles -- grow linearly (see \cite[equations (20), (21)]{NTS}, though note the labelling convention is reversed in the latter article). Thus to see a stationary measure one might consider, as we did for the BBS, a two-sided infinite configuration $((Q_j)_{j\in\mathbb{Z}},({E}_j)_{j\in\mathbb{Z}})$. Under suitable conditions regarding the asymptotic behaviour of these sequences, one might then encode these via piecewise linear paths with intervals of gradient $-1$ or $1$ as at \eqref{abpath} -- extending the definition to the negative axis in the obvious way, and then defining the dynamics via \eqref{tstarpath}. This is our approach in the next part of our discussion.  Although $\mathcal{T}$ is a more complicated operator than $T$, we are still able to identify an invariant measure for it by considering the Palm measure of the zigzag process under which $0$ is always a local maximum. As we show in Corollary \ref{abcor}, reading off the lengths of the intervals of constant gradient, from the latter conclusion we obtain a natural invariant measure for the ultra-discrete Toda lattice. Specifically, the invariant configuration we present has that both $(Q_j)_{j\in\mathbb{Z}}$ and $({E}_j)_{j\in\mathbb{Z}}$ are i.i.d.\ sequences of exponential random variables (independent of each other).

The result described in the previous paragraph for the Palm measure of the zigzag process, and the corollary for the ultra-discrete Toda lattice, will be proved in Subsection \ref{zzpalm}. Towards this end, in Subsection \ref{marpalm}, we first establish the BBS analogue of the results for the Markov configuration of Subsection \ref{markovsec}. Finally, in Subsection \ref{perpalm}, we establish periodic versions of the results.

\subsection{Invariance of a Palm measure for the Markov configuration}\label{marpalm} In this subsection, we suppose $\eta$ is the Markov configuration of Subsection \ref{markovsec} with $p_0,p_1\in(0,1)$ and $p_0+p_1<1$. The associated Palm measure we will consider is defined to be the law of the random configuration $\eta^*$, as characterised by
\begin{equation}\label{etastar}
\mathbf{E}\left(f(\eta^*)\right)=\mathbf{E}\left(f(\eta)\:\vline\:\eta_0=0,\:\eta_1=1\right)
\end{equation}
for any bounded functions $f:\{0,1\}^\mathbb{Z}\rightarrow\mathbb{R}$. Equivalently, we can express this in terms of the associated path encodings as
\[\mathbf{E}\left(f(S^*)\right)=\mathbf{E}\left(f(S)\:\vline\:0\in \mathrm{LM}(S)\right).\]
The main result of the subsection is the following, which establishes invariance of $S^*$ under $\mathcal{T}$. The proof is an adaptation of \cite[Lemma 4.5]{Ferrari}, cf.\ the classical arguments of \cite{Har,PS}.

\begin{prop}\label{propperpalmdisc} If $S^*$ is the path encoding of the two-sided stationary Markov chain described in Subsection \ref{markovsec} with $p_0,p_1\in (0,1)$ satisfying $p_0+p_1<1$ conditioned to have a local maximum at $0$, then $\mathcal{T}S^*\buildrel{d}\over{=}S^*$.
\end{prop}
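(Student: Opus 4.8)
The plan is to realise $S^*$ as a Palm version and reduce the claim to a statement about Pitman's transformation interchanging two Palm measures. Throughout, let $\mathbf{P}$ denote the (unconditional) law of the path encoding of the two-sided stationary Markov chain of Subsection \ref{markovsec}; recall that $\mathbf{P}$ is supported on $\mathcal{S}^{lin}$ (so that $\mathbf{P}$-a.s.\ $T$ is invertible, $T^{-1}$ being reflection in the future minimum, and each of $S$, $TS$, $T^{-1}S$ has infinitely many local maxima accumulating at $\pm\infty$), that $\mathbf{P}$ is $T$-invariant by Corollary \ref{markovcor}, and that $T$ commutes with spatial shifts. Since $n\in \mathrm{LM}(S)$ if and only if $\eta_n=0$ and $\eta_{n+1}=1$, the set $N:=\mathrm{LM}(S)$ is a stationary simple point process of intensity $\lambda:=\mathbf{P}(\eta_0=0,\eta_1=1)\in(0,\infty)$ (using $p_0>0$, $p_1<1$), and $S^*$ has law $\mathbf{P}^0:=\mathbf{P}(\,\cdot\mid 0\in \mathrm{LM}(S))$, which is exactly the Palm measure of $N$. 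The assertion $\mathcal{T}S^*\buildrel{d}\over{=}S^*$ is then the statement that $\theta^{\tau(TS)}(TS)$ has law $\mathbf{P}^0$ under $\mathbf{P}^0$, i.e., for every bounded measurable $g$,
\[\mathbf{E}_{\mathbf{P}}\!\left[\mathbf{1}_{\{0\in \mathrm{LM}(S)\}}\,g\!\left(\theta^{\tau(TS)}(TS)\right)\right]=\mathbf{E}_{\mathbf{P}}\!\left[\mathbf{1}_{\{0\in \mathrm{LM}(S)\}}\,g(S)\right].\]

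First I would perform the reduction. Substituting $S\mapsto T^{-1}S$ on the left-hand side and using the $T$-invariance of $\mathbf{P}$ gives
\[\mathbf{E}_{\mathbf{P}}\!\left[\mathbf{1}_{\{0\in N^-\}}\,g\!\left(\theta^{\tau(N)}S\right)\right],\qquad\text{where } N^-:=\mathrm{LM}(T^{-1}S),\ \ \tau(N):=\min\big(N\cap[0,\infty)\big),\]
so it remains to show that moving to the first local maximum of $S$ at or after the origin transports the Palm measure of $N^-$ onto that of $N$.

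The crux -- and the step I expect to require the most work -- is the following deterministic interlacing property: the point processes $N=\mathrm{LM}(S)$ and $N^-=\mathrm{LM}(T^{-1}S)$ are disjoint and strictly interlace, meaning that in the increasing enumeration of $N\cup N^-$ the points alternate between $N^-$ and $N$ (every point of $N$ is immediately preceded, among the points of $N\cup N^-$, by a point of $N^-$, and conversely). I would establish this by a direct analysis of one step of the dynamics using the explicit descriptions of $T$ and $T^{-1}$ from \cite{CKSS}: informally, solitons move strictly rightwards under $T$ (strictly leftwards under $T^{-1}$), and the interaction combinatorics preserve the spatial ordering of the ``descent starts'' in such a way that exactly one local maximum of $T^{-1}S$ is inserted into each gap between consecutive local maxima of $S$; equivalently one can argue through the excursion structure of the carrier process. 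Granting the interlacing, the map $\phi_\omega\colon N^-(\omega)\to N(\omega)$ sending a point of $N^-$ to the next point of $N$ to its right is a shift-covariant bijection, and under the Palm measure of $N^-$ one has $\tau(N)=\phi_\cdot(0)$; thus $\theta^{\tau(N)}$ is precisely the point-map determined by $\phi$.

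To finish, I would invoke the standard Palm-calculus fact (Neveu's exchange formula / mass transport, exactly as in \cite[Lemma 4.5]{Ferrari}, cf.\ \cite{Har,PS}) that a shift-covariant bijective point-map between two stationary point processes carries the Palm measure of the first onto the Palm measure of the second. Applied to $\phi$, this gives $\mathbf{E}_{\mathbf{P}}[\mathbf{1}_{\{0\in N^-\}}\,g(\theta^{\tau(N)}S)]=\lambda\,\mathbf{E}_{\mathbf{P}^0}[g(S)]=\mathbf{E}_{\mathbf{P}}[\mathbf{1}_{\{0\in \mathrm{LM}(S)\}}\,g(S)]$, which is the identity we needed. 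The only genuinely new ingredients relative to \cite[Lemma 4.5]{Ferrari} are the interlacing lemma and the observation that the recentring operator $\theta^\tau$ is exactly the one realising the induced point-map -- the recentring being what keeps $0$ a local maximum, in accordance with the index-preserving form of the ultra-discrete Toda evolution \eqref{udtodaeq}.
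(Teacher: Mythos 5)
Your argument is correct and is essentially the paper's proof in mirror image: both are Palm-exchange arguments in the spirit of \cite[Lemma 4.5]{Ferrari}, resting on $T$-invariance (Corollary \ref{markovcor}), spatial stationarity, and a single deterministic interlacing fact. The paper works forwards: it decomposes over $\tau(TS)=n$, observes that on $\{0\in \mathrm{LM}(S)\}$ one has $\tau(TS)=\bar{\tau}(S):=\inf\{n\geq 0:n\in \mathrm{LI}(S)\}$, shifts by $n$ and re-sums, and finishes with the equivalence $0\in \mathrm{LI}(S)\Leftrightarrow 0\in \mathrm{LM}(TS)$ plus $T$-invariance -- that is, its exchange is between the point processes $\mathrm{LM}(S)$ and $\mathrm{LI}(S)$, whose alternation is trivial for a $\pm 1$ path. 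You instead pull back by $T^{-1}$ and exchange between $\mathrm{LM}(T^{-1}S)$ and $\mathrm{LM}(S)$. The one place where you defer work -- the interlacing of these two processes, which you propose to extract from soliton combinatorics or the excursion structure of the carrier -- does not need that machinery: a two-line increment computation shows $\mathrm{LM}(TX)=\mathrm{LI}(X)$ for any $X$ in the domain of $T$ (this is precisely the identity underlying the paper's steps), so $\mathrm{LM}(S)=\mathrm{LI}(T^{-1}S)$ and your two point processes are simply the local maxima and local minima of the single nearest-neighbour path $T^{-1}S$; they therefore alternate, are disjoint, and the ``next point to the right'' map is automatically a shift-covariant bijection. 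With that substitution your proof closes, the only remaining difference from the paper being that you invoke the abstract bijective point-map/exchange theorem where the paper writes out the corresponding sum over $n$ by hand.
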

\begin{proof} By definition, writing $c=\mathbf{P}(0\in \mathrm{LM}(S))^{-1}$, we have that
\begin{eqnarray}
\mathbf{E}\left(f(\mathcal{T}S^*)\right)&=&c\mathbf{E}\left(f(\theta^\tau(TS))\mathbf{1}_{\{0\in \mathrm{LM}(S)\}}\right)\nonumber\\
&=&c\sum_{n>0}\mathbf{E}\left(f(\theta^n(TS))\mathbf{1}_{\{0\in \mathrm{LM}(S),\:\tau(TS)=n\}}\right),\label{asf}
\end{eqnarray}
where we note that $\tau(TS)>0$ on the event $0\in \mathrm{LM}(S)$, and $\theta^n$ is defined as at \eqref{thetatdef}. Now, it is an elementary exercise to check that, on $0\in \mathrm{LM}(S)$, the event $\tau(TS)=n$ is equivalent to $\bar{\tau}(S)=n$, where $\bar{\tau}(S):=\inf\{n\geq 0:\:n\in \mathrm{LI}(S)\}$, and $\mathrm{LI}(S)$ is the set of local minima of $S$. Hence we obtain from \eqref{asf} that
\begin{eqnarray*}
\mathbf{E}\left(f(\mathcal{T}S^*)\right)&=&c\sum_{n>0}\mathbf{E}\left(f(\theta^n(TS))\mathbf{1}_{\{0\in \mathrm{LM}(S),\:\bar{\tau}(S)=n\}}\right)\\
&=&c\sum_{n>0}\mathbf{E}\left(f(T\theta^nS)\mathbf{1}_{\{\tau_-(\theta^nS)=-n,\:0\in LI(\theta^nS)\}}\right),
\end{eqnarray*}
where we define $\tau_-(S):=\sup\{n\leq 0:\:n\in LM(S)\}$. Applying the spatial stationarity of $\eta$, it follows that
\begin{eqnarray*}
\mathbf{E}\left(f(\mathcal{T}S^*)\right)&=&c\sum_{n>0}\mathbf{E}\left(f(TS)\mathbf{1}_{\{\tau_-(S)=-n,\:0\in LI(S)\}}\right)\\
&=&c\mathbf{E}\left(f(TS)\mathbf{1}_{\{0\in LI(S)\}}\right).
\end{eqnarray*}
Finally, we note that $0\in LI(S)$ if and only if $0\in LM(TS)$, and so
\[\mathbf{E}\left(f(\mathcal{T}S^*)\right)=c\mathbf{E}\left(f(TS)\mathbf{1}_{\{0\in LM(TS)\}}\right)
=c\mathbf{E}\left(f(S)\mathbf{1}_{\{0\in LM(S)\}}\right)
=\mathbf{E}\left(f(S^*)\right),\]
where the second equality follows from the invariance of $S$ under $T$ (i.e.\ Corollary \ref{markovcor}).
\end{proof}

\subsection{Invariance of a Palm measure for the zigzag process}\label{zzpalm} Via a scaling limit, the result of the previous subsection readily transfers to the zigzag process. In particular, given $\lambda_0<\lambda_1$, now let $\eta^*=(\eta^*_t)_{t\in\mathbb{R}}$ be a continuous time stochastic process taking values on $\{0,1\}$ such that: $(\eta^*_t)_{t\geq0}$ is a continuous time Markov chain that jumps from 0 to 1 with rate $\lambda_0$, and from 1 to 0 with rate $\lambda_1$, started from $\eta^*_0=1$; $(\eta^*_{-t})_{t\geq0}$ is a continuous time Markov chain with the jumps from 0 to 1 with rate $\lambda_0$, and from 1 to 0 with rate $\lambda_1$, started from $\eta^*_0=0$; and the two processes are independent. (NB. To make the process $\eta^*$ right-continuous, we ultimately set $\eta^*_0=1$, and also take the right-limits at all the jump times.) Our Palm measure for the zigzag process is then the law of $S^*=(S^*_t)_{t\in\mathbb{R}}$, where
\[S^*_t:=\int_0^t(1-2\eta^*_s)ds,\]
which can be viewed as the zigzag process $S$ of Subsection \ref{zzsec} conditioned on $0\in\mathrm{LM}(S)$, though in this case we note the conditioning is non-trivial since the event $0\in\mathrm{LM}(S)$ has zero probability. For the process $S^*$, we have the following result.

\begin{prop}\label{uuu} If $S^*$ is the zigzag process with rates $0<\lambda_0<\lambda_1$ conditioned to have a local maximum at 0 (in the sense described above), then $\mathcal{T}S^*\buildrel{d}\over{=}S^*$.
\end{prop}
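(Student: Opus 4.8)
The plan is to deduce the statement from its discrete counterpart, Proposition \ref{propperpalmdisc}, by means of a scaling limit, in exact parallel to the passage from the Markov configuration of Subsection \ref{markovsec} to the zigzag process carried out in Subsection \ref{zzsec}. Concretely, for small $\varepsilon>0$ let $\eta^{*,\varepsilon}$ be the Palm-measure configuration defined as at \eqref{etastar} but associated with the two-sided stationary Markov chain with transition matrix $P_\varepsilon$ from \eqref{Peps} (note that $\varepsilon(\lambda_0+\lambda_1)<1$ and $\lambda_0<\lambda_1$, so the hypotheses of Proposition \ref{propperpalmdisc} are satisfied), and write $S^{*,\varepsilon}$ for its path encoding. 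Proposition \ref{propperpalmdisc} gives $\mathcal{T}S^{*,\varepsilon}\buildrel{d}\over{=}S^{*,\varepsilon}$ for every such $\varepsilon$. It then suffices to show that $(\varepsilon S^{*,\varepsilon}_{t/\varepsilon})_{t\in\mathbb{R}}\buildrel{d}\over{\rightarrow}(S^*_t)_{t\in\mathbb{R}}$ in $C(\mathbb{R},\mathbb{R})$, and that the operator $\mathcal{T}$ is almost surely continuous with respect to the law of $S^*$; the conclusion $\mathcal{T}S^*\buildrel{d}\over{=}S^*$ then follows by the continuous mapping theorem.

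For the convergence of the rescaled path encodings, I would decompose $\eta^{*,\varepsilon}$ using the Markov property together with reversibility: conditioning on $\{\eta_0=0,\ \eta_1=1\}$, the restriction $(\eta^{*,\varepsilon}_n)_{n\geq 1}$ is a discrete-time Markov chain with matrix $P_\varepsilon$ started from state $1$, the restriction $(\eta^{*,\varepsilon}_{-n})_{n\geq 0}$ is, by detailed balance, a Markov chain with matrix $P_\varepsilon$ started from state $0$, and the two pieces are conditionally independent. Rescaling time by $\varepsilon$ and invoking the elementary convergence behind \eqref{etaconv}, the first piece converges to the continuous-time chain with rates $\lambda_0,\lambda_1$ started from $1$, and the second to an independent copy run in the negative time direction started from $0$; this is exactly the description of $\eta^*$. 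Hence $\eta^{*,\varepsilon}\buildrel{d}\over{\rightarrow}\eta^*$ in $D(\mathbb{R},\{0,1\})$, and, since integration maps $D(\mathbb{R},\mathbb{R})$ continuously into $C(\mathbb{R},\mathbb{R})$, the claimed convergence of the (integrated, linearly interpolated) path encodings to $S^*$ follows.

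It remains to address the continuity of $\mathcal{T}=\theta^\tau\circ T$. For the $T$-component I would verify the past-maximum conditions \eqref{cond1} and \eqref{cond2} with $b_\varepsilon=\varepsilon$, exactly as in the lemma of Subsection \ref{zzsec}: the drift $t^{-1}S^*_t\rightarrow\frac{\lambda_1-\lambda_0}{\lambda_1+\lambda_0}>0$ gives \eqref{cond2}, while \eqref{cond1} reduces, via the Markov property, to the same exponential tail estimate on $\varepsilon M^{\varepsilon}_0$ used there (the Palm conditioning alters the relevant probabilities by at most a bounded factor), so that $T(\varepsilon S^{*,\varepsilon}_{\cdot/\varepsilon})\buildrel{d}\over{\rightarrow}TS^*$ jointly with the convergence of the past maxima, by the argument underlying Proposition \ref{invarlimit}. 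For the shift $\theta^\tau$, I would use the observation from the proof of Proposition \ref{propperpalmdisc} that on $\{0\in\mathrm{LM}(S)\}$ one has $\tau(TS)=\bar{\tau}(S)$, the first nonnegative local-minimum time of $S$; for $S^*$ this is simply the time of the first slope change of the zigzag on $[0,\infty)$ (an $\mathrm{Exp}(\lambda_1)$ random variable), which is almost surely finite and a continuous functional of the path, and the corresponding discrete quantities converge to it. Combining, $\theta^{\tau}$ is almost surely continuous at $TS^*$, and the continuous mapping theorem gives the result.

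The main obstacle is this last step: ensuring that the composite operator $\mathcal{T}$, and in particular the first-local-maximum time $\tau\circ T$, survives the limit. The delicate point is joint continuity, namely that $\varepsilon S^{*,\varepsilon}_{\cdot/\varepsilon}\rightarrow S^*$ must be compatible simultaneously with the convergence of the rescaled past maxima (making $T$ continuous, which is the role of \eqref{cond1}–\eqref{cond2}) and with the convergence of the location of the first descent after $0$ (making $\theta^\tau$ continuous). The identity $\tau(TS)=\bar{\tau}(S)$ on $\{0\in\mathrm{LM}(S)\}$ is what renders the second point tractable, since it replaces a functional of the Pitman-transformed path by an elementary, manifestly continuous functional of the zigzag path itself.
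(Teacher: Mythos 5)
Your proposal is correct and follows essentially the same route as the paper: define the discrete Palm configuration $\eta^{*,\varepsilon}$ for the Markov chain with matrix $P_\varepsilon$, invoke the discrete invariance from Proposition \ref{propperpalmdisc}, establish convergence of the rescaled path encodings to $S^*$ together with the conditions \eqref{cond1}--\eqref{cond2}, and pass the identity $\mathcal{T}S^{*,\varepsilon}\buildrel{d}\over{=}S^{*,\varepsilon}$ to the limit via an adaptation of Proposition \ref{invarlimit}. The only difference is that you spell out details the paper dismisses as ``readily checked'' --- notably the joint convergence $\varepsilon\tau(TS^{*,\varepsilon})\rightarrow\tau(TS^*)$, which you justify via the identity $\tau(TS)=\bar{\tau}(S)$ on $\{0\in\mathrm{LM}(S)\}$.
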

\begin{proof} Let $\eta^{*,\varepsilon}$ be the process defined at \eqref{etastar} for parameters $p_0=\varepsilon\lambda_0$ and $p_1=1-\varepsilon\lambda_1$. Then, similarly to \eqref{etaconv}, it is straightforward to check that
\[\left(\eta^{*,\varepsilon}_{\lfloor t/\varepsilon \rfloor}\right)_{t\in\mathbb{R}}\buildrel{d}\over{\rightarrow} \left(\eta^*_t\right)_{t\in\mathbb{R}},\]
and hence the associated path encodings satisfy
\[\left(\varepsilon S^{*,\varepsilon}_{t/\varepsilon}\right)_{t\in\mathbb{R}}\buildrel{d}\over{\rightarrow} \left(S^*_t\right)_{t\in\mathbb{R}}.\]
Moreover, the conditions \eqref{cond1} and \eqref{cond2} are readily checked in this setting. From these facts, together with the readily-checked observation that $\varepsilon\tau(TS^{*,\varepsilon})\buildrel{d}\over{\rightarrow} \tau(TS^*)$ (simultaneously with the convergence of path encodings), the result follows by a simple adaptation of the argument of Proposition \ref{invarlimit}.
\end{proof}

Since the lengths of the intervals upon which $S^{*}$ is decreasing are i.i.d.\ parameter $\lambda_1$ exponential random variables, and the lengths of the intervals upon which it is increasing are i.i.d.\ parameter $\lambda_0$ exponential random variables (and the two collections are independent), we immediately deduce the following conclusion from the previous result (and the description of the ultra-discrete Toda lattice given at the start of the section).

\begin{cor}\label{abcor} Let $(Q_j)_{j\in\mathbb{Z}}$ be an i.i.d.\ sequence of parameter $\lambda_1$ exponential random variables, and  $({E}_j)_{j\in\mathbb{Z}}$ be an i.i.d.\ sequence of parameter $\lambda_0$ exponential random variables. Suppose further  $(Q_j)_{j\in\mathbb{Z}}$ and $({E}_j)_{j\in\mathbb{Z}}$ are independent. If $0<\lambda_0<\lambda_1$, then the distribution of $((Q_j)_{j\in\mathbb{Z}},({E}_j)_{j\in\mathbb{Z}})$ is invariant under the dynamics of the ultra-discrete Toda lattice.
\end{cor}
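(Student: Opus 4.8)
The plan is to deduce this from the invariance of the zigzag Palm measure under $\mathcal{T}$ (Proposition~\ref{uuu}), by identifying the two-sided path encoding of $((Q_j)_{j\in\mathbb{Z}},(E_j)_{j\in\mathbb{Z}})$ with the process $S^*$ of Subsection~\ref{zzpalm}. First I would make the identification explicit. Given the i.i.d.\ exponential data, form the two-sided piecewise-linear path $S$ by concatenating, on $[0,\infty)$, segments of alternating gradient $-1,+1,-1,+1,\dots$ and lengths $Q_1,E_1,Q_2,E_2,\dots$, and symmetrically on $(-\infty,0]$ so that $0$ is a local maximum (the obvious two-sided extension of \eqref{abpath}). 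Reading $S$ rightward from $0$, the successive lengths of its decreasing and increasing intervals are precisely the successive holding times of the continuous-time Markov chain $\eta^*$ of Subsection~\ref{zzpalm} in states $1$ and $0$, which are independent exponentials of parameters $\lambda_1$ and $\lambda_0$; the same holds reading leftward, with the roles reversed, in accordance with the definition of $\eta^*$ on the negative half-line. Since $((Q_j),(E_j))\mapsto S$ is a bi-measurable bijection from $(0,\infty)^{\mathbb{Z}}\times(0,\infty)^{\mathbb{Z}}$ onto the set of piecewise-linear paths with slopes $\pm1$ possessing a local maximum at $0$, the law of $((Q_j)_{j\in\mathbb{Z}},(E_j)_{j\in\mathbb{Z}})$ is exactly the push-forward of the law of $S^*$ under the inverse map.

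Next I would check that the dynamics match under this bijection. Because $\lambda_0<\lambda_1$, the zigzag process has strictly positive drift, so $\sup_{t\le 0}S^*_t<\infty$ and $S^*_t\to\pm\infty$ as $t\to\pm\infty$, $\mathbf{P}$-a.s.; hence $TS^*$ is well-defined, as is $\mathcal{T}S^*=\theta^\tau(TS^*)$, and this transformed path again has a local maximum at $0$, so it corresponds via the bijection to a configuration $(((\mathcal{T}Q)_j)_{j\in\mathbb{Z}},((\mathcal{T}E)_j)_{j\in\mathbb{Z}})$. That this configuration is the image of $((Q_j),(E_j))$ under the ultra-discrete Toda lattice map \eqref{udtodaeq} follows from the finite-$J$ correspondence of \cite[Theorem~1.1]{CST} (the proposition recalled just before Figure~\ref{todafig}) together with a locality/limiting argument: for a path with positive drift the past maximum is attained at an a.s.\ finite location, so the values of $(\mathcal{T}Q)_j$ and $(\mathcal{T}E)_j$ over any bounded range of $j$ are determined by only finitely many of the $Q_l,E_l$, and one passes from finite truncations to the two-sided configuration exactly as in the treatment of the BBS dynamics in \cite[Lemma~2.4]{CKSS} (the same reasoning also shows the transformed configuration again has positive drift, so the dynamics can be iterated indefinitely).

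Combining these observations with Proposition~\ref{uuu} finishes the argument: $\mathcal{T}S^*\buildrel{d}\over{=}S^*$ transfers through the bijection to $(((\mathcal{T}Q)_j)_{j\in\mathbb{Z}},((\mathcal{T}E)_j)_{j\in\mathbb{Z}})\buildrel{d}\over{=}((Q_j)_{j\in\mathbb{Z}},(E_j)_{j\in\mathbb{Z}})$, which is the claimed invariance. I expect the only genuine technical point to be the locality/limiting step needed to lift the path-encoding description of the Toda dynamics from the finite-$J$ setting of \cite{CST} to the two-sided infinite configuration; once that is granted, both the exponential structure of the interval lengths and the identification of their path encoding with $S^*$ are immediate from the construction of the Palm measure in Subsection~\ref{zzpalm}.
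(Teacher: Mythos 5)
Your argument is correct and is essentially the paper's: identify the two-sided path encoding of $((Q_j),(E_j))$ with the Palm-measure zigzag process $S^*$ via the exponential holding times of $\eta^*$, and transfer the invariance $\mathcal{T}S^*\buildrel{d}\over{=}S^*$ of Proposition \ref{uuu} through this bijection. The only difference is that you supply a locality/limiting argument to lift the finite-$J$ Toda correspondence to two-sided infinite configurations, whereas the paper sidesteps this by simply defining the two-sided dynamics via $\mathcal{T}$ acting on the path encoding.
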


\subsection{Palm measures in the periodic case}\label{perpalm} The arguments of the previous two subsections are readily adapted to the periodic case. Since few changes are needed, we only present a sketch, beginning with the discrete case. For $N\in\mathbb{N}$, let $\eta^{*N}=(\eta^{*N}_n)_{n\in\mathbb{Z}}$ be the random configuration with law characterised by
\begin{equation}\label{ggg}
\mathbf{E}\left(f(\eta^{*N})\right)=\mathbf{E}\left(f(\eta^N)\:\vline\:\eta^N_0=0,\:\eta^N_1=1\right),
\end{equation}
where $\eta^N$ is the periodic Markov configuration of Example \ref{permarex}, cf.\ \eqref{etastar}. Note that an alternative characterisation of the law of $\eta^{*N}$ is given by
\begin{equation}\label{altchar}
\mathbf{E}\left(f((\eta^{*N}_n)_{n=1}^N)\right)=\mathbf{E}\left(f((\eta_n)_{n=1}^N)\:\vline\:\eta_1=1,\:\eta_N=0,\:S_N>0\right),
\end{equation}
where $\eta$ is the Markov configuration of Subsection \ref{markovsec}. We are then able to check the following result.

\begin{prop} Let $p_0,p_1\in(0,1)$. If $S^{*N}$ is the path encoding of $\eta^{*N}$, then $\mathcal{T}S^{*N}\buildrel{d}\over{=}S^{*N}$.
\end{prop}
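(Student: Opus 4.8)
The plan is to transcribe the proof of Proposition \ref{propperpalmdisc} almost verbatim, since that argument used only three ingredients -- spatial stationarity of the underlying configuration, its invariance under $T$, and certain purely pathwise identities for reflection in the past maximum -- all of which are available here. Indeed, the periodic Markov configuration $\eta^N$ of Example \ref{permarex} is stationary under spatial shifts by Lemma \ref{etalem}, and satisfies $T\eta^N\buildrel{d}\over{=}\eta^N$ by Corollary \ref{gibbsinv}; note that, unlike in the non-periodic case, no drift restriction on $(p_0,p_1)$ is needed, because both of these facts hold for the Gibbs measure without one, the indicator $\mathbf{1}_{\{f_0<N/2\}}$ ensuring the path encoding lies in $\mathcal{S}^{lin}$. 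Since the Palm conditioning in \eqref{ggg} forces $f_0(\eta^{*N})\geq 1$, the path $S^{*N}$ (and, by \eqref{fkpres}, also $TS^{*N}$) genuinely zigzags, so $\mathrm{LM}(S^{*N})$, $\mathrm{LI}(S^{*N})$ are non-empty and $\tau(TS^{*N})$ is a.s. a finite positive integer.

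Concretely, I would set $c=\mathbf{P}(0\in\mathrm{LM}(S^N))^{-1}$ (positive and finite for every $N$ for which the conditioning event in \eqref{ggg} has positive probability, e.g.\ all $N\geq 3$, witnessed by the single-soliton configuration $(1,0,\dots,0)$), and start from
\[\mathbf{E}\left(f(\mathcal{T}S^{*N})\right)=c\,\mathbf{E}\left(f(\theta^\tau(TS^N))\mathbf{1}_{\{0\in\mathrm{LM}(S^N)\}}\right)=c\sum_{n>0}\mathbf{E}\left(f(\theta^n(TS^N))\mathbf{1}_{\{0\in\mathrm{LM}(S^N),\,\tau(TS^N)=n\}}\right),\]
using $\theta^n$ as defined at \eqref{thetatdef} and $\mathcal{T}$ as at \eqref{tstarpath}. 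Exactly as in Proposition \ref{propperpalmdisc}, on $\{0\in\mathrm{LM}(S^N)\}$ the event $\{\tau(TS^N)=n\}$ coincides with $\{\bar\tau(S^N)=n\}$ (first nonnegative local minimum of $S^N$); then $\theta^n(TS^N)=T(\theta^nS^N)$ rewrites the summand in terms of $T\theta^nS^N$ and the events $\{\tau_-(\theta^nS^N)=-n\}$, $\{0\in\mathrm{LI}(\theta^nS^N)\}$; applying the shift-stationarity of $\eta^N$ from Lemma \ref{etalem} (which extends to the doubly-infinite cyclic repetition as recorded in its proof) collapses the sum to give $\mathbf{E}(f(\mathcal{T}S^{*N}))=c\,\mathbf{E}(f(TS^N)\mathbf{1}_{\{0\in\mathrm{LI}(S^N)\}})$; and since $0\in\mathrm{LI}(S^N)$ iff $0\in\mathrm{LM}(TS^N)$, an appeal to $TS^N\buildrel{d}\over{=}S^N$ (Corollary \ref{gibbsinv}) yields $\mathbf{E}(f(\mathcal{T}S^{*N}))=c\,\mathbf{E}(f(S^N)\mathbf{1}_{\{0\in\mathrm{LM}(S^N)\}})=\mathbf{E}(f(S^{*N}))$, as required. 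The characterisation \eqref{altchar} can be verified in passing but is not needed.

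Since the proof is a transcription of an argument already carried out, there is no substantive obstacle; the only points requiring a short check are that $\tau(TS^{*N})$ is a.s.\ finite and positive (handled by the $\mathcal{S}^{lin}$/periodicity remarks above, together with the fact that the Palm conditioning gives $f_0\geq 1$), and that the listed pathwise equivalences between events for $S^N$ and for $TS^N$ are genuinely combinatorial statements about $T$ on nearest-neighbour paths, hence unaffected by periodicity. Everything else is identical to Proposition \ref{propperpalmdisc}.
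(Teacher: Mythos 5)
Your proposal is correct and is essentially the paper's own proof, which likewise just transplants the argument of Proposition \ref{propperpalmdisc}, citing the spatial stationarity of $\eta^N$ (Lemma \ref{etalem}), its invariance under $T$ (Corollary \ref{gibbsinv}), the positivity of $\mathbf{P}(0\in \mathrm{LM}(S^N))$, and the almost-sure finiteness of the quantities involving $\tau$, $\bar{\tau}$ and $\tau_-$. Your additional remarks (the single-soliton witness for positivity of the conditioning event, and the $f_0\geq 1$ plus $f_0<N/2$ observations ensuring the path genuinely zigzags with positive drift) are accurate elaborations of points the paper leaves implicit.
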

\begin{proof} This is identical to the proof of Proposition \ref{propperpalmdisc}. In particular, in view of the Palm description of the law of $\eta^{*N}$ at \eqref{ggg}, it suffices to note that $\eta^N$ is spatially stationary (Lemma \ref{etalem}) and invariant under $T$ (Corollary \ref{gibbsinv}), that $\mathbf{P}(0\in LM(S^N))>0$, and that the terms involving $\tau$, $\bar{\tau}$ and $\tau_-$ are almost-surely finite.
\end{proof}

For the continuous version of this result, first let $\eta^{*L}=(\eta^{*L}_t)_{t\in\mathbb{R}}$ be the $L$-periodic process whose law is characterised by
\begin{equation}\label{altchar2}
\mathbf{E}\left(F\left((\eta^{*L}_t)_{t\in[0,L]}\right)\right)=\mathbf{E}\left(F\left((\eta_t)_{t\in[0,L]}\right)\:\vline\:\eta_0=1,\:\eta_L=0,\:S_L>0\right),
\end{equation}
where $\eta$ is the two-sided stationary continuous time Markov chain of Subsection \ref{zzsec}. (NB. Of course, this definition is problematic in terms of defining $\eta_t^{*L}$ for $t\in L\mathbb{Z}$; we resolve the issue by assuming $\eta^{*L}$ is right-continuous.) If $S^{*L}$ is the corresponding path encoding, defined similarly to \eqref{sdef}, then we have the following result.

\begin{prop}\label{perpropsl} Let $\lambda_0,\lambda_1>0$. If $S^{*L}$ is the path encoding of $\eta^{*L}$, then $\mathcal{T}S^{*L}\buildrel{d}\over{=}S^{*L}$.
\end{prop}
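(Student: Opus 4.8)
The plan is to mimic the proof of Proposition \ref{uuu}, obtaining $S^{*L}$ as a scaling limit of the discrete periodic Palm configurations of the preceding proposition and transferring the invariance under $\mathcal{T}$ to the limit. Fix $\lambda_0,\lambda_1>0$ and $L>0$. For small $\varepsilon>0$, let $N_\varepsilon:=\lfloor L/\varepsilon\rfloor$ and let $\eta^{*N_\varepsilon}$ denote the discrete periodic Palm configuration built from the periodic Markov configuration of Example \ref{permarex} with underlying transition matrix $P_\varepsilon$ as in \eqref{Peps} (so that the underlying Markov parameters are $p_0=\varepsilon\lambda_0$, $p_1=1-\varepsilon\lambda_1$, which lie in $(0,1)$ for $\varepsilon$ small). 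Using the characterisation \eqref{altchar} together with the convergence \eqref{etaconv} of the discrete Markov chain to its continuous-time counterpart, one checks that
\[\left(\eta^{*N_\varepsilon}_{\lfloor t/\varepsilon\rfloor}\right)_{t\in\mathbb{R}}\buildrel{d}\over{\rightarrow}\left(\eta^{*L}_t\right)_{t\in\mathbb{R}}\]
in $D(\mathbb{R},\{0,1\})$, where the limiting process is the one characterised by \eqref{altchar2}; hence the associated path encodings satisfy $(\varepsilon S^{*N_\varepsilon}_{t/\varepsilon})_{t\in\mathbb{R}}\buildrel{d}\over{\rightarrow}(S^{*L}_t)_{t\in\mathbb{R}}$ in $C(\mathbb{R},\mathbb{R})$, giving \eqref{scaling} with $a_\varepsilon=b_\varepsilon=\varepsilon$.

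Next I would verify the remaining hypotheses needed to pass invariance to the limit. Since the path encodings are conditioned on $S_L>0$, we have $t^{-1}S^{*L}_t\to L^{-1}S^{*L}_L>0$ as $|t|\to\infty$, $\mathbf{P}$-a.s., which yields \eqref{cond2}; and \eqref{cond1} with $b_\varepsilon=\varepsilon$ follows by the same bound as in Subsection \ref{perbmsec}, controlling the past maximum over $(-\infty,t]$ by the supremum over a single period plus a term decreasing linearly in the number of completed periods. In addition, since $TS^{*L}$ is piecewise linear with gradients $\pm 1$ and possesses a non-degenerate first local maximum, one checks that $\tau(TS^{*L})$ is a.s.\ finite and that $\varepsilon\tau(TS^{*N_\varepsilon})\buildrel{d}\over{\rightarrow}\tau(TS^{*L})$ jointly with the convergence of path encodings. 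Granting these facts, invariance of each $S^{*N_\varepsilon}$ under $\mathcal{T}$ (the preceding proposition) combined with the argument of Proposition \ref{invarlimit}, modified to carry along the shift $\theta^\tau$ exactly as in the proof of Proposition \ref{uuu}, gives $\mathcal{T}S^{*L}\buildrel{d}\over{=}S^{*L}$.

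The step requiring the most care is the joint convergence $(\varepsilon S^{*N_\varepsilon}_{\cdot/\varepsilon},\varepsilon\tau(TS^{*N_\varepsilon}))\buildrel{d}\over{\rightarrow}(S^{*L},\tau(TS^{*L}))$: one must argue that the location of the first local maximum of the Pitman transform is a continuous functional along this scaling, which rests on the local maxima of the limiting zigzag-type path being isolated and non-degenerate, so that small uniform perturbations of the path cannot move $\tau$. The remaining ingredients are routine and parallel Subsections \ref{perbmsec} and \ref{perzzsec} and the proof of Proposition \ref{uuu}; in particular, no new phenomena arise from periodicity beyond those already handled there.
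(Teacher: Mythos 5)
Your proposal is correct and follows essentially the same route as the paper: both obtain $S^{*L}$ as the $\varepsilon\to 0$ scaling limit of the discrete periodic Palm configurations $\eta^{*N_\varepsilon}$ with Markov parameters as in \eqref{Peps}, compare \eqref{altchar} with \eqref{altchar2} via \eqref{etaconv}, and then transfer the invariance under $\mathcal{T}$ exactly as in Proposition \ref{uuu} (including the joint convergence of $\varepsilon\tau(TS^{*N_\varepsilon})$). Your write-up simply makes explicit the checks of \eqref{cond1}, \eqref{cond2} and the continuity of $\tau$ that the paper leaves as ``identical to Proposition \ref{uuu}.''
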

\begin{proof} Similarly to the proof of Proposition \ref{uuu}, we use a scaling argument. Specifically, as in Subsection \ref{perzzsec}, we set $N_\varepsilon:=\lfloor L/\varepsilon\rfloor$, and define the discrete time process $\eta^{*\varepsilon,L}$ by \eqref{ggg}, where the underlying  Markov parameters are chosen as in \eqref{Peps}. Comparing \eqref{altchar} and \eqref{altchar2}, it is straightforward to argue from \eqref{etaconv} that
\[\left(\eta^{*\varepsilon,L}_{\lfloor t/\varepsilon \rfloor}\right)_{t\in\mathbb{R}}\buildrel{d}\over{\rightarrow} \left(\eta^{*L}_t\right)_{t\in\mathbb{R}}.\]
The convergence of associated path encodings follows, and the remainder of the proof is identical to Proposition \ref{uuu}.
\end{proof}

We conclude the section be describing the application of the previous result to the ultra-discrete periodic Toda lattice, see \cite{IKT, InTa, KiTo} for background. For this model, we describe the current state by a vector of the form $((Q_j)_{j=1}^J,({E}_j)_{j=1}^{J})  \in (0,\infty)^{2J}$ for some $J\in\mathbb{N}$. Although it appears we have an extra variable to the non-periodic case, this is not so, because we assume that $\sum_{j=1}^J Q_j+\sum_{j=1}^J{E}_j=L$ for some fixed $L\in\mathbb{R}$. Moreover, in order to define the dynamics, we further suppose that $\sum_{j=1}^J Q_j<L/2$, which can be seen as the equivalent condition to requiring fewer than $N/2$ particles in the $N$-periodic BBS model. Introducing the additional notation $({D}_j)_{j=1}^J$ for convenience, the dynamics of the system are given by the following adaptation of \eqref{udtodaeq}:
\begin{align}
(\mathcal{T}Q)_j &:=\min \left\{ Q_j-{D}_j,{E}_j \right\},\label{udtodaper}\\
(\mathcal{T}{E})_j &:=Q_{j+1}+{E}_j-(\mathcal{T}Q)_j,\nonumber,\\
{D}_j&:=\min_{0\leq k\leq J-1}\sum_{l=1}^k\left({E}_{j-l}-Q_{j-l}\right).\nonumber
\end{align}
(In these definitions $((Q_j)_{j=1}^J,({E}_j)_{j=1}^{J})$ are extended periodically to $((Q_j)_{j\in\mathbb{Z}},({E}_j)_{j\in\mathbb{Z}})$.) Given a state vector $((Q_j)_{j=1}^J,({E}_j)_{j=1}^{J})$, we define an associated path encoding $S$ by appealing to the definition at \eqref{abpath} for $t\in[0,L]$, and then concatenating copies of $(S_t)_{t\in[0,L]}$ in such a way that the resulting path is an element of $C(\mathbb{R},\mathbb{R})$. Using this path encoding, the dynamics at \eqref{udtodaper} can be expressed in terms of the operator $\mathcal{T}$ defined as at \eqref{tstarpath}.

\begin{prop}[See {\cite[Theorem 2.3]{CST}}] Fix $J\in\mathbb{N}$ and $L\in(0,\infty)$. Let $((Q_j)_{j=1}^J,({E}_j)_{j=1}^{J})  \in (0,\infty)^{2J}$ satisfy $\sum_{j=1}^J Q_j+\sum_{j=1}^J{E}_j=L$ and $\sum_{j=1}^J Q_j<L/2$, and $S$ be the associated path encoding. It then holds that the periodically transformed configuration $(((\mathcal{T}Q)_j)_{j=1}^J,((\mathcal{T}{E})_j)_{j=1}^{J})$, defined as at \eqref{udtodaper}, has path encoding given by $\mathcal{T}S$, defined as at \eqref{tstarpath}.
\end{prop}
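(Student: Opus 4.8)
The plan is to establish this directly, following the strategy used in the non-periodic case (the preceding proposition, i.e.\ \cite[Theorem 1.1]{CST}), by tracking the carrier process $W=M-S$; the novelty, and the only real difficulty, is in handling the periodic boundary. As a first step I would record the basic structural facts. The hypothesis $\sum_{j=1}^JQ_j<L/2$ forces $S$, as built from \eqref{abpath}, to have asymptotic slope $(L-2\sum_jQ_j)/L>0$, so that $M_0<\infty$ and $TS$ is well-defined; moreover $S_{t+L}-S_t=L-2\sum_jQ_j$ for all $t\in\mathbb{R}$, whence $M$, $TS$ and $\mathcal{T}S$ all inherit the same periodicity-up-to-drift. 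In particular $\mathcal{T}S$ is a piecewise-linear path with slopes $\pm 1$ that begins at a local maximum and is $L$-periodic up to the drift of $S$, so it is the path encoding of a periodic state $((\tilde Q_j)_{j=1}^J,(\tilde E_j)_{j=1}^J)$ with $\sum_j\tilde Q_j+\sum_j\tilde E_j=L$; since $\mathcal{T}S$ has the same drift as $S$, one in fact gets $\sum_j\tilde Q_j=\sum_jQ_j<L/2$, so the transformed state again lies in the admissible range.

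The heart of the argument is a segment-by-segment analysis of $W$. Writing $a_j$ for the value of $W$ at the local maximum of $S$ separating the segments of lengths $E_{j-1}$ and $Q_j$, an elementary computation shows that $M$ is flat on the down-segment of length $Q_j$ (so $W$ rises by $Q_j$ there) and that on the following up-segment of length $E_j$ the carrier falls at unit rate until it reaches $0$ and then stays at $0$; hence $a_{j+1}=(a_j+Q_j-E_j)^+$. Because $\sum_{l=1}^J(Q_l-E_l)=2\sum_jQ_j-L<0$, this Lindley-type recursion has a unique $J$-periodic solution, namely $a_j=\max_{0\le k\le J-1}\sum_{l=1}^k(Q_{j-l}-E_{j-l})=-D_j$ with $D_j$ as in \eqref{udtodaper} --- this identity is exactly where $\sum_jQ_j<L/2$ enters. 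Using $TS_t=S_t+2W_t-2M_0$, one then checks that $TS$ rises at unit rate over each down-segment of $S$, falls at unit rate for a time $\min(a_j+Q_j,E_j)$ at the start of the up-segment of length $E_j$, and rises for the remaining time $E_j-\min(a_j+Q_j,E_j)$. From this description one reads off that the first local maximum of $TS$ at a nonnegative time is at $t=Q_1$, so $\mathcal{T}S=\theta^{Q_1}(TS)$, and that the successive decreasing and increasing run-lengths of $\mathcal{T}S$ are $\min(a_j+Q_j,E_j)$ and $\bigl(E_j-\min(a_j+Q_j,E_j)\bigr)+Q_{j+1}$. Substituting $a_j=-D_j$ and comparing with \eqref{udtodaper} yields $\tilde Q_j=(\mathcal{T}Q)_j$ and $\tilde E_j=(\mathcal{T}E)_j$, which is the claim.

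The main obstacle, as anticipated, is the periodic boundary. In the non-periodic case the carrier value at a given point is pinned down by a finite look-back to the initial condition $S_t=t$ ($t<0$); in the periodic case $a_j$ is genuinely global, being fixed by how far back one must trace $W$ to find the last time it vanished, which may exceed one full period. The key point is to show that the relevant supremum is nevertheless attained within a single period --- equivalently, that $D_j$ as written in \eqref{udtodaper} really equals $-a_j$ --- and this is precisely where the net-negative-drift condition $\sum_{j=1}^JQ_j<L/2$ is used; some care is also needed to make the case split (according to whether $W$ does or does not reach $0$ on a given up-segment) uniform around the cycle.
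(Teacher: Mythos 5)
This proposition is not proved in the paper at all: it is quoted verbatim from \cite[Theorem 2.3]{CST}, so there is no in-text argument to compare yours against. Your direct verification is correct and complete: the segment-by-segment analysis of $W=M-S$ gives the Lindley recursion $a_{j+1}=(a_j+Q_j-E_j)^+$, the $L$-periodicity of $W$ (which follows from $S_{t+L}-S_t$ being constant) forces the unique periodic solution $a_j=-D_j$ (the condition $\sum_jQ_j<L/2$ ensuring the supremum defining the fixed point is attained with $0\leq k\leq J-1$), and reading off the run-lengths of $TS=S+2W-2M_0$ after the shift by $\tau(TS)=Q_1$ reproduces exactly \eqref{udtodaper}; this is the natural periodic extension of the carrier argument behind \cite[Theorem 1.1]{CST} as depicted in Figure \ref{todafig}.
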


This picture of the ultra-discrete periodic Toda lattice dynamics allows us to deduce the following corollary of Proposition \ref{perpropsl}.

\begin{cor} Fix $J\in\mathbb{N}$, and $A,L\in(0,\infty)$ such that $0<A<L/2$. Let $(\Delta^Q_j)_{j=1}^J$ and $(\Delta^{E}_j)_{j=1}^J$ be independent $\mathrm{Dirichlet}(1,1,\dots,1)$ random variables, and set
\[Q_j:=A\Delta^Q_j,\qquad {E}_j:=(L-A)\Delta^{E}_j,\qquad j=1,\dots,J.\]
It is then the case that $((Q_j)_{j=1}^J,({E}_j)_{j=1}^J)$ is invariant under the dynamics of the ultra-discrete periodic Toda lattice.
\end{cor}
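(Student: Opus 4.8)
The plan is to realise the configuration in the statement as an appropriate conditioning of the periodic zigzag Palm measure of Proposition~\ref{perpropsl}, and then to transfer its invariance. Fix $L$ as in the statement, pick any $\lambda_0,\lambda_1>0$, and let $\eta^{*L}$, $S^{*L}$ be as in Subsection~\ref{perpalm}, so that $\mathcal{T}S^{*L}\buildrel{d}\over{=}S^{*L}$. Since $S^{*L}$ is $L$-periodic with a local maximum at $0$, its restriction to $[0,L]$ is a concatenation of decreasing runs (on which $\eta^{*L}=1$) and increasing runs (on which $\eta^{*L}=0$), starting with a decreasing run and ending with an increasing one; reading off the successive run lengths gives a vector $((Q_j)_{j=1}^{N_1},(E_j)_{j=1}^{N_1})$ with $\sum_{j}(Q_j+E_j)=L$, where $N_1\in\mathbb{N}$ is the (random) number of decreasing runs per period. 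By the proposition relating \eqref{udtodaper} to $\mathcal{T}$, this vector is exactly the ultra-discrete periodic Toda state encoded by $S^{*L}$, and $\mathcal{T}S^{*L}$ encodes its one-step evolution. It therefore suffices to show that, for each $J\in\mathbb{N}$ and $a\in(0,L/2)$, the law of $((Q_j),(E_j))$ obtained by conditioning on $\{N_1=J,\ \sum_jQ_j=a\}$ is the one in the statement, and is $\mathcal{T}$-invariant.

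First I would compute this conditional law. Using the characterisation \eqref{altchar2} of $\eta^{*L}$ in terms of the stationary continuous-time Markov chain $\eta$ and the memorylessness of the exponential holding times, a direct calculation shows that, on $\{N_1=J\}$, the joint density of $(Q_1,\dots,Q_J,E_1,\dots,E_{J-1})$ (with $E_J:=L-\sum_jQ_j-\sum_{j<J}E_j$) is proportional to $\exp\!\big(-\lambda_1\sum_jQ_j-\lambda_0\sum_jE_j\big)$ on the polytope $\{Q_j,E_j>0,\ \sum_jQ_j<L/2\}$, the last constraint coming from the conditioning $S_L>0$ in \eqref{altchar2}. Conditioning further on $\sum_jQ_j=a$ makes the exponential factor constant, so the resulting law is uniform on $\{q_j>0,\ \sum_jq_j=a\}\times\{e_j>0,\ \sum_je_j=L-a\}$; equivalently $(Q_j/a)_{j=1}^J$ and $(E_j/(L-a))_{j=1}^J$ are independent $\mathrm{Dirichlet}(1,\dots,1)$ vectors, as claimed (and, as it must be, the answer is independent of $\lambda_0,\lambda_1$).

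It remains to upgrade the $\mathcal{T}$-invariance of $S^{*L}$ to the $\mathcal{T}$-invariance of this conditional law. The key observations are that $\mathcal{T}$ preserves $N_1$ (it is the fixed dimension $J$ of the periodic Toda state space) and also $A:=\sum_jQ_j$: the per-period increment of the path encoding equals $L-2A$, and from $(TS)_t=2M_t-S_t-2M_0$ together with the fact that $S$ and its past maximum $M$ are $L$-periodic up to the common drift $L-2A$, one checks that $T$ — and hence $\mathcal{T}=\theta^\tau\circ T$ (the shift $\theta^\tau$ obviously not changing per-period increments) — leaves this increment, and so $A$, unchanged. Since $\{N_1=J\}$ is a $\mathcal{T}$-invariant event, Proposition~\ref{perpropsl} gives that the law $\mu$ of $((Q_j),(E_j))$ conditioned on $\{N_1=J\}$ satisfies $\mathcal{T}_*\mu=\mu$; disintegrating $\mu$ over the value of $A$ and using that $\mathcal{T}$ maps each fibre $\{A=a\}$ into itself, uniqueness of disintegrations forces the conditional law $\mu_a$ on $\{A=a\}$ (which we identified above as the Dirichlet configuration) to be $\mathcal{T}$-invariant for $A_*\mu$-almost every $a$; since $A_*\mu$ is easily seen to have strictly positive density on $(0,L/2)$, this holds for a dense set of $a$, and it extends to every $a\in(0,L/2)$ because $a\mapsto\mu_a$ is weakly continuous and $\mathcal{T}$ is continuous on $(0,\infty)^{2J}$. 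I expect this final measure-theoretic step — conditioning a $\mathcal{T}$-invariant measure on the continuously-distributed conserved quantity $A$ — to be the main obstacle; the rest is a routine density computation plus bookkeeping of which run lengths are the $Q_j$ and which are the $E_j$. (One could instead avoid the continuous conditioning entirely by noting, via Corollary~\ref{gibbsinv} with $\beta_0,\beta_1\to\pm\infty$ and the fact that $T$ preserves $f_0$ and $f_1$, that the uniform measure on periodic configurations with a prescribed number of particles and a prescribed number of solitons is $T$-invariant, and then taking a scaling limit as in Subsection~\ref{perzzsec}.)
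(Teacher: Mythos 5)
Your proposal is correct and follows essentially the same route as the paper: realise the state as the run lengths of the periodic zigzag Palm measure conditioned on having $J$ local maxima per period, use Proposition \ref{perpropsl} together with the preservation of $J$ and of $A=\sum_j Q_j$ to get invariance, identify the conditional law given $A=a$ as the product of Dirichlet$(1,\dots,1)$ vectors, and pass from almost-every to every $a$ by continuity. The only (cosmetic) differences are that you sidestep the paper's explicit Jacobian computation by observing the density is constant on each fibre $\{A=a\}$, and you phrase the final conditioning step via disintegration rather than the paper's continuity-in-$A$ of the conditional expectations.
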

\begin{proof} Fix $\lambda_0,\lambda_1>0$. Let $S^{*L,J}$ be a random path with law equal to that of $S^{*L}$ conditioned on
\begin{equation}\label{tLM}
\#\left\{t\in[0,L):\:t\in LM(S^{*L})\right\}=J,
\end{equation}
and write $Q_1,{E}_1,\dots,Q_J,{E}_J$ for the lengths of the sub-intervals of $[0,L]$ upon which $S^{*L,J}$ has gradient $-1,1,\dots,-1,1$, respectively. Since the left-hand side of \eqref{tLM} is preserved by $\mathcal{T}$ (see \cite[Theorem 2.3]{CST}, for example), it readily follows from Proposition \ref{perpropsl} that $\mathcal{T}S^{*L,J}\buildrel{d}\over{=}S^{*L,J}$, and hence the law of $((Q_j)_{j=1}^J,({E}_j)_{j=1}^J)$ is invariant for the dynamics of the ultra-discrete Toda lattice.

We next aim to identify the distribution of $((Q_j)_{j=1}^J,({E}_j)_{j=1}^J)$ as described in the previous paragraph. By considering the behaviour of the underlying two-sided stationary Markov configuration $\eta$ (that jumps from $i$ to $1-i$ with rate $\lambda_i$, $i=0,1$), it is straightforward to deduce that
\begin{eqnarray}
f_{Q,E}\left((q_j)_{j=1}^J,(e_j)_{j=1}^{J-1}\right)&\propto &\left(\prod_{j=1}^{J}\lambda_1e^{-\lambda_1 q_j}\right)\left(\prod_{j=1}^{J-1}\lambda_0e^{-\lambda_0 e_j}\right)e^{-\lambda_0\left(L-\sum_{j=1}^Jq_j-\sum_{j=1}^{J-1}e_j\right)}\nonumber\\
&\propto&e^{-(\lambda_1-\lambda_0)\sum_{j=1}^Jq_j}\label{abdens}
\end{eqnarray}
for vectors $(q_j)_{j=1}^J$ and $(e_j)_{j=1}^{J-1}$ satisfying $\sum_{j=1}^Jq_j+\sum_{j=1}^{J-1}e_j<L$, $\sum_{j=1}^Jq_j<L/2$ (and the density is zero otherwise). The form of this density suggests the merit of introducing transformed random variables:
\begin{align*}
A&:=\sum_{j=1}^J Q_j,\\
\Delta^Q_j&:=\frac{Q_j}{A},\qquad j=1,\dots,J-1,\\
\Delta^{E}_j&:=\frac{{E}_j}{L-A},\qquad j=1,\dots,J-1.
\end{align*}
Writing $f_{Q,E}$ for the density of the random variables $((Q_j)_{j=1}^J,({E}_j)_{j=1}^{J-1})$, and $f_{\Delta^{Q},A,\Delta^{{E}}}$ for the density of the random variables $((\Delta^Q_j)_{j=1}^{J-1},A,(\Delta^{E}_j)_{j=1}^{J-1})$, we have from a standard change of variable formula:
\[f_{\Delta^{Q},A,\Delta^{{E}}}\left((\delta^Q_j)_{j=1}^{J-1},a,(\delta^{E}_j)_{j=1}^{J-1}\right)=f_{{Q},{E}}\left((q_j)_{j=1}^J,(e_j)_{j=1}^{J-1}\right)\mathrm{Jac}\left((\delta^Q_j)_{j=1}^{J-1},a,(\delta^{E}_j)_{j=1}^{J-1}\right),\]
where
\begin{align*}
(q_1,\dots,q_J)&=\left(a\delta^Q_1,\dots,a\delta^Q_{J-1},a\left(1-\sum_{j=1}^{J-1}\delta^Q_j\right)\right),\\
(e_1,\dots,e_{J-1})&=\left((L-a)\delta^{E}_1,\dots,(L-a)\delta^{E}_{J-1}\right),
\end{align*}
and $\mathrm{Jac}((\delta^Q_j)_{j=1}^{J-1},a,(\delta^{E}_j)_{j=1}^{J-1})$, the Jacobian of the relevant transformation, is given by the modulus of the determinant of the following matrix (all other entries are zero):
\[
\left(
  \begin{array}{ccccccccc}
    a  &   &       &   &            \delta^Q_1        &  & &&\\
       & a &       &   &            \delta^Q_2        &  & &&\\
       &   &\ddots &   &            \vdots                 &  & &&\\
       &   &       & a &            \delta^Q_{J-1}    &  & &&\\
    -a &-a & \dots &-a & 1-\sum_{j=1}^{J-1}\delta^Q_j &  & &&\\
       &   &       &   &            \delta^{E}_1         &  L-a &  &&\\
       &   &       &   &            \delta^{E}_2         &    & L-a  &&\\
       &   &       &   &            \vdots                 &    &    & \ddots &  \\
       &   &       &   &            \delta^{E}_{J-1}     &    &    &        & L-a
  \end{array}
\right).\]
Now, it is elementary to compute this Jacobian to be equal to $(a(L-a))^{(J-1)}$, and thus we obtain from \eqref{abdens} that
\[f_{\Delta^{Q},A,\Delta^{{E}}}\left((\delta^Q_j)_{j=1}^{J-1},a,(\delta^{E}_j)_{j=1}^{J-1}\right)=C(a(L-a))^{(J-1)}e^{-(\lambda_1-\lambda_0)a}\]
for $a<L/2$, $\sum_{j=1}^{J-1}\delta^Q_j<1$, $\sum_{j=1}^{J-1}\delta^{E}_j<1$ (and the density is zero otherwise). Setting $\Delta_j^\kappa=1-\sum_{j=1}^{J-1}\Delta^\kappa_j$ for $\kappa={Q},{E}$, the above formula implies that $A$, $(\Delta^Q_j)_{j=1}^J$ and $(\Delta^{E}_j)_{j=1}^J$ are independent, with $A$ having density
\[f_A(a)=C(a(L-a))^{(J-1)}e^{-(\lambda_1-\lambda_0)a},\qquad a\in(0,L/2),\]
and $(\Delta^Q_j)_{j=1}^J$ and $(\Delta^{E}_j)_{j=1}^J$ being distributed as $\mathrm{Dirichlet}(1,1,\dots,1)$ random variables.

To complete the proof, it remains to condition on the value of $A$. To this end, we first note that, since $A$ is preserved by the dynamics (see \cite[Section 4]{TComm} or \cite[Corollary 2.4]{CST}, for example), we readily obtain that, for any continuous bounded function $F$:
\[\mathbf{E}\left(F\left((((\mathcal{T}Q)_j)_{j=1}^J,((\mathcal{T}{E})_j)_{j=1}^J)\right)\:\vline\:A\right)=\mathbf{E}\left(F\left(((Q_j)_{j=1}^J,({E}_j)_{j=1}^J)\right)\:\vline\:A\right).\]
Moreover, it is straightforward to check that both sides here are continuous in the value of $A$, which means we can interpret the above equation as holding for any fixed, deterministic value of $A\in(0,L/2)$. The desired result follows.
\end{proof}

Finally, we note that a similar conclusion can be drawn for the ultra-discrete periodic Toda lattice whose states are restricted to integer values. Since its proof is almost identical (but slightly easier) to that of the previous corollary, we simply state the result.

\begin{cor} Fix $J\in\mathbb{N}$, and $A,L\in\mathbb{N}$ be such that $J\leq \min\{A,L-A\}$ and also $A<L/2$. Let $(Q_j-1)_{j=1}^J$ and $({E}_j-1)_{j=1}^J$ be independent multinomial random variables with parameters given by $(A-J; J^{-1},J^{-1},\dots,J^{-1})$ and $(L-A-J; J^{-1},J^{-1},\dots,J^{-1})$, respectively. It is then the case that $((Q_j)_{j=1}^J,({E}_j)_{j=1}^J)$ is invariant under the dynamics of the ultra-discrete periodic Toda lattice.
\end{cor}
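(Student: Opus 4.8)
The plan is to follow the proof of the preceding corollary almost verbatim, working with the discrete periodic Palm measure $S^{*N}$ of Subsection~\ref{perpalm} in place of its continuous analogue $S^{*L}$, and replacing the change-of-variables computation that produced the Jacobian $(a(L-a))^{J-1}$ by an elementary combinatorial count. Fix $p_0,p_1\in(0,1)$ and set $N:=L$. First I would let $S^{*N,J}$ denote the random path obtained by conditioning $S^{*N}$ on the event that it has exactly $J$ local maxima within a single period $[0,N)$. Since this count is a conserved quantity for the periodic dynamics (cf.\ \cite[Theorem 2.3]{CST}, as used already in the proof of the previous corollary), and the proposition above gives $\mathcal{T}S^{*N}\buildrel{d}\over{=}S^{*N}$, it follows that $\mathcal{T}S^{*N,J}\buildrel{d}\over{=}S^{*N,J}$. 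Writing $Q_1,E_1,\dots,Q_J,E_J$ for the lengths of the successive maximal intervals of constant gradient $-1,+1,\dots,-1,+1$ of $S^{*N,J}$ over one period, the description of the ultra-discrete periodic Toda lattice via path encodings then yields that $((Q_j)_{j=1}^J,(E_j)_{j=1}^J)$ is invariant under \eqref{udtodaper}, and it remains only to identify its law.

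For this, I would appeal to the characterisation \eqref{altchar} of $\eta^{*N}$ as the non-periodic stationary Markov chain of Subsection~\ref{markovsec} conditioned on $\{\eta_1=1,\,\eta_N=0,\,S_N>0\}$. Counting the transitions of the chain along a prescribed run-length vector $(q_1,e_1,\dots,q_J,e_J)$ --- with each entry at least $1$, $\sum_jq_j+\sum_je_j=N$, and $\sum_jq_j<N/2$ --- shows that its probability is proportional to $p_1^{A-J}(1-p_1)^Jp_0^{J-1}(1-p_0)^{N-A-J}$, where $A:=\sum_{j=1}^Jq_j$; for fixed $J$ and $N$ this equals a constant times $\bigl(p_1/(1-p_0)\bigr)^A$, and so depends on the run-length vector only through $A$. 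Consequently, conditional on the value of $A$, the joint law of $((Q_j)_{j=1}^J,(E_j)_{j=1}^J)$ splits as a product of independent laws for $(Q_j)_{j=1}^J$ and $(E_j)_{j=1}^J$ supported on $\{\sum_jQ_j=A\}$ and $\{\sum_jE_j=N-A\}$ respectively, and in particular is free of $p_0,p_1$; a short combinatorial computation then identifies the laws of $(Q_j-1)_{j=1}^J$ and $(E_j-1)_{j=1}^J$ given $\{A=a\}$ with the product of multinomial distributions recorded in the statement.

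Finally, since both $A$ (the number of particles per period) and $N=L$ are preserved by the dynamics, and the relevant quantities are integer-valued, one may simply condition on the event $\{A=a\}$ for any fixed admissible $a$ --- no continuity argument of the type used in the previous corollary is needed here --- to conclude that the stated distribution is invariant under the ultra-discrete periodic Toda lattice. I expect the only slightly delicate points to be the bookkeeping in the transition count (handling the periodic boundary together with the Palm conditioning consistently) and the precise identification of the conditional law of the run lengths given $A$; everything else is a routine transcription of the preceding proof, and is in fact easier in the present setting since no Jacobian is involved.
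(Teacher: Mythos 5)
Your overall strategy is exactly the one the paper intends --- the paper itself gives no details beyond the remark that the proof is ``almost identical (but slightly easier)'' to that of the preceding corollary --- and most of your steps are sound: conditioning $S^{*N}$ on having exactly $J$ local maxima per period, invoking conservation of that count to get $\mathcal{T}S^{*N,J}\buildrel{d}\over{=}S^{*N,J}$, the transition count giving probability proportional to $p_1^{A-J}(1-p_1)^Jp_0^{J-1}(1-p_0)^{N-A-J}\propto\bigl(p_1/(1-p_0)\bigr)^A$ for a prescribed run-length vector, and the observation that conditioning on $\{A=a\}$ needs no continuity argument in the integer-valued setting. The problem is the final step that you defer as ``a short combinatorial computation'': since each run-length vector corresponds to exactly one configuration, your own computation shows that, given $J$ and $A=a$, the law of $((Q_j)_{j=1}^J,(E_j)_{j=1}^J)$ is \emph{uniform} over the $\binom{a-1}{J-1}\binom{L-a-1}{J-1}$ pairs of compositions --- the correct discrete analogue of the $\mathrm{Dirichlet}(1,\dots,1)$ laws in the previous corollary --- whereas the product of multinomials in the statement weights a composition $(q_j)_{j=1}^J$ by $\prod_j((q_j-1)!)^{-1}$, which is not constant. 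These are genuinely different distributions, so the identification you assert cannot be carried out.

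Moreover the discrepancy is not repairable by a cleverer computation, because the multinomial law is in fact not invariant. Take $J=2$, $A=3$, $L=7$ (so $J\leq\min\{A,L-A\}$ and $A<L/2$). The dynamics \eqref{udtodaper} (equivalently $\mathcal{T}$ applied to the path encoding of $1011000$) sends $((Q_j),(E_j))=((1,2),(1,3))$ to $((1,2),(2,2))$; since $\mathcal{T}$ is a bijection of the finite set of states with fixed $(J,A,L)$, any invariant law must be constant along orbits, yet the stated product-multinomial law assigns these two states probabilities $\frac12\cdot\frac14$ and $\frac12\cdot\frac12$ respectively. So your argument, carried out honestly, proves invariance of the uniform-over-compositions law, i.e.\ $(Q_j-1)_{j=1}^J$ uniform on $\{m\in\mathbb{Z}_+^J:\sum_jm_j=A-J\}$ and independently $(E_j-1)_{j=1}^J$ uniform on $\{m\in\mathbb{Z}_+^J:\sum_jm_j=L-A-J\}$; it is the statement as printed (and hence your concluding identification) that needs amending, after which your proof is complete and does follow the paper's intended route.
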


\section{Conditioned one-sided processes}\label{condsec}

The introduction of Pitman's transformation in \cite{Pitman} was important as it provided a (simple) sample path construction of a three-dimensional Bessel process from a one-dimensional Brownian motion, where the former process can be viewed as Brownian motion conditioned to stay non-negative. Moreover, in the argument of \cite{Pitman}, a discrete analogue of a three-dimensional Bessel process is constructed, and the relation between such a process and random walk conditioned to stay non-negative is explored in detail in \cite{BD}. In this section, we present a general statement that highlights how a statement of invariance under Pitman's transformation for a two-sided process naturally yields an alternative characterisation of the one-sided process conditioned to stay non-negative. The result is particularly transparent in the case of random walks with i.i.d.\ or Markov increments, as well as the zigzag process (details of these examples are presented below). Whilst the applications are not new (cf.\ \cite{HMOC} in particular), we believe it is still worthwhile to present a simple proof of this unified result.

\begin{prop} Let $S=(S_t)_{t\in\mathbb{R}}$ be a random element of $C(\mathbb{R},\mathbb{R})$ that is almost-surely asymptotically linear with strictly positive drift (cf. $\mathcal{S}^{lin}$, as defined at \ref{slin}), and which satisfies $TS\buildrel{d}\over{=}S$. It then holds that
\begin{equation}\label{fffff}
(S_t)_{t\geq 0}\:\vline\:\left\{\inf_{t\geq 0}S_t=0\right\}\buildrel{d}\over{=}(2\bar{M}_t-S_t)_{t\geq 0}\:\vline\:\left\{M_0=0\right\},
\end{equation}
where $\bar{M}=(\bar{M}_t)_{t\geq0}$ is defined by $\bar{M}_t:=\sup_{0\leq s\leq t}S_s$.
\end{prop}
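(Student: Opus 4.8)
The plan is to exploit the decomposition of the two-sided path $S$ at the time its past maximum is achieved, and then match the two sides of \eqref{fffff} by applying the invariance $TS\buildrel{d}\over{=}S$. First I would observe that, since $S$ is asymptotically linear with strictly positive drift, the past maximum $M_0=\sup_{t\leq 0}S_t$ is almost surely finite and achieved at some (almost surely unique) time $\sigma\leq 0$; likewise the future infimum $\inf_{t\geq 0}S_t$ is achieved. The key structural fact is that under Pitman's transformation $TS_t=2M_t-S_t-2M_0$, the behaviour of $TS$ on $[0,\infty)$ depends only on $(S_t)_{t\geq 0}$ through the running maximum from time $0$ onwards, \emph{provided} $M_0$ is achieved at a time $\le 0$; more precisely, for $t\geq 0$ one has $M_t=\max\{M_0,\bar M_t\}$, so $(TS)_t=2\max\{M_0,\bar M_t\}-S_t-2M_0$. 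On the event $\{M_0=0\}$ (equivalently $\sup_{t\leq 0}S_t=S_0=0$, i.e.\ $0$ is a global maximum of $S$ on $(-\infty,0]$), this simplifies to $(TS)_t=2\bar M_t-S_t$ for $t\ge 0$, where we used that $\bar M_t\ge \bar M_0=0=M_0$. Thus the right-hand side of \eqref{fffff} is exactly the law of $(TS)_{t\ge0}$ conditioned on $\{M_0=0\}$.

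Next I would analyse the left-hand side. Note $\inf_{t\geq 0}S_t=0$ means $S_t\ge 0$ for all $t\ge0$ and the infimum value is $0$; since $S_0=0$ this says $0$ is the global minimum of $(S_t)_{t\ge0}$. The natural bijection is to relate this to the future-minimum-reflection operator $T^{-1}$: recall from the excerpt that $T^{-1}$ is "reflection in the future minimum", and that for paths in $\mathcal S^{lin}$ (the continuous analogue) one has $T^{-1}TS=S$. Writing $\tilde S:=T^{-1}S$, the condition $\{0\text{ is the global min of }(S_t)_{t\ge0}\}$ should translate, under the reflection, into a statement about $\tilde S$ on $(-\infty,0]$ analogous to $\{M_0=0\}$ — indeed $T^{-1}$ reflects in the \emph{future} minimum, so by a time-reversal symmetry the roles of "past maximum $=0$" and "future minimum $=0$" get interchanged. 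Concretely, I would verify: $(S_t)_{t\ge0}\mid\{\inf_{t\ge0}S_t=0\}$ has the same law as $(TS_t)_{t\ge0}\mid\{0\text{ is global max of }(S_t)_{t\le0}\}$ when $TS\buildrel d\over=S$, because applying $T$ to the conditioned-on-future-minimum event produces the conditioned-on-past-maximum event, and $T$ acting on $(S_t)_{t\le0}$ with a past maximum at $0$... — this is precisely where I would need to be careful.

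The cleanest route, which I would ultimately follow, avoids $T^{-1}$: start from $TS\buildrel d\over= S$, so for any bounded measurable $F$ on $C([0,\infty),\mathbb R)$ and the event $A=\{M_0(TS)=0\}$ we have
\[
\mathbf E\big[F\big((TS)_t)_{t\ge0}\big)\,\mathbf 1_{A}\big]=\mathbf E\big[F\big((S_t)_{t\ge0}\big)\,\mathbf 1_{\{M_0=0\}}\big].
\]
The main obstacle, and the crux of the argument, is to show that the event $\{M_0(TS)=0\}$ together with the path $((TS)_t)_{t\ge0}$ can be re-expressed purely in terms of $(S_t)_{t\ge 0}$: one checks that $0$ being the past maximum of $TS$ forces $(TS)_t\ge0$ with infimum $0$ on $[0,\infty)$ — i.e.\ $\inf_{t\ge0}(TS)_t=0$ — and conversely, and that on this event $(TS)_t = $ the "reflect $(S_t)_{t\ge0}$ in its future minimum" transform of $(S_t)_{t\ge0}$, which is an involution on one-sided paths with positive drift. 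Putting this together: the right-hand displayed expectation equals $\mathbf E[F(R S)\,\mathbf 1_{\{\inf_{t\ge0}S_t=0\}}]$ where $R$ is that one-sided reflection, while the left-hand side equals $\mathbf E[F((2\bar M_t-S_t)_{t\ge0})\mid M_0=0]\cdot\mathbf P(M_0=0)$; normalising both by the appropriate (equal, by the identity) probabilities and noting $R$ fixes the conditioning event gives \eqref{fffff}. I expect the bookkeeping identifying "$(TS)_{t\ge0}$ on $\{M_0(TS)=0\}$" with "$R$ applied to $(S_t)_{t\ge0}$ on $\{\inf_{t\ge0}S_t=0\}$", and checking the conditioning events correspond with equal probability, to be the one genuinely delicate step; everything else is the elementary observation $M_t=\max\{M_0,\bar M_t\}$ for $t\ge 0$.
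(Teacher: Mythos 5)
Your first paragraph is correct and reproduces half of the paper's argument: on $\{M_0=0\}$ one has $M_t=\max\{M_0,\bar M_t\}=\bar M_t$ for $t\ge 0$, so $(TS)_t=2\bar M_t-S_t$, and hence the right-hand side of \eqref{fffff} is the law of $((TS)_t)_{t\ge0}$ conditioned on $\{M_0=0\}$. The gap is in how you connect this to the left-hand side. Your ``cleanest route'' applies $TS\buildrel{d}\over{=}S$ to the functional $\omega\mapsto F((\omega_t)_{t\ge0})\mathbf{1}_{\{M_0(\omega)=0\}}$ and then asserts that the event $\{M_0(TS)=0\}$ coincides with $\{\inf_{t\ge0}(TS)_t=0\}$ and can be read off from $(S_t)_{t\ge0}$ alone. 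That identification is false. Take, for instance, the asymptotically linear path with $S_t=t+2$ for $t\le-2$, $S_t=-t-2$ on $[-2,-1]$ and $S_t=t$ for $t\ge-1$: then $M_0=0$ and $\inf_{t\ge0}(TS)_t=0$, but $(TS)_{-1}=2M_{-1}-S_{-1}-2M_0=1$, so $M_0(TS)=\sup_{t\le0}(TS)_t\ge1\neq 0$. In general $M_0(TS)=\sup_{t\le0}\left(2M_t-S_t\right)-2M_0$ depends on the excursions of $S$ below its running maximum on the negative half-line; it is neither determined by $(S_t)_{t\ge0}$ nor equivalent to $\{M_0=0\}$ or to $\{\inf_{t\ge0}(TS)_t=0\}$, so the two sides of your displayed identity cannot be matched in the way you propose.

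The fact you actually need --- and which you gesture at via $T^{-1}$ in your second paragraph but never pin down, before abandoning that route --- is that for asymptotically linear $S$ with strictly positive drift, reflection in the past maximum converts the past maximum of $S$ into the future infimum of $TS$, so that $\{\inf_{t\ge0}(TS)_t=0\}=\{M_0=0\}$ almost surely (indeed $\inf_{t\ge0}(TS)_t=\inf_{t\ge0}(2M_t-S_t)-2M_0=-M_0$, and $M_0\ge S_0=0$); this is precisely the ingredient the paper imports from \cite[Theorem 2.14]{CKSS}. With it, the proof is the chain you almost wrote, but with the conditioning carried on the future infimum throughout rather than switched to $M_0(TS)$: apply invariance to $\omega\mapsto F((\omega_t)_{t\ge0})\mathbf{1}_{\{\inf_{t\ge0}\omega_t=0\}}$ to get $(S_t)_{t\geq 0}\:\vline\:\{\inf_{t\geq 0}S_t=0\}\buildrel{d}\over{=}(TS_t)_{t\geq 0}\:\vline\:\{\inf_{t\geq 0}(TS)_t=0\}$, rewrite the latter conditioning event as $\{M_0=0\}$, and conclude with your (correct) observation that $(TS)_t=2\bar M_t-S_t$ on that event. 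No one-sided reflection operator $R$, and no appeal to $T^{-1}$, is required.
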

\begin{proof} We have that
\begin{eqnarray*}
(S_t)_{t\geq 0}\:\vline\:\left\{\inf_{t\geq 0}S_t=0\right\}&\buildrel{d}\over{=}&(TS_t)_{t\geq 0}\:\vline\:\left\{\inf_{t\geq 0}TS_t=0\right\}\\
&\buildrel{d}\over{=}&(TS_t)_{t\geq 0}\:\vline\:\left\{M_0=0\right\}\\
&\buildrel{d}\over{=}&(2M_t-S_t)_{t\geq 0}\:\vline\:\left\{M_0=0\right\}\\
&\buildrel{d}\over{=}&(2\bar{M}_t-S_t)_{t\geq 0}\:\vline\:\left\{M_0=0\right\},
\end{eqnarray*}
where the first equality is a consequence of the assumption $TS\buildrel{d}\over{=}S$; the second follows because $\inf_{t\geq 0}TS_t=M_0$ for asymptotically linear $S$ (see \cite[Theorem 2.14]{CKSS}); the third by the definition of $T$ (and the conditioning on $M_0=0$); and the fourth from the observation that $M_t=\max\{\bar{M}_t,M_0\}$ for $t\geq 0$.
\end{proof}

\begin{rem} The condition of asymptotic linearity is sufficient but not necessary for the above proof to work. The relation between the future infimum of $TS$ and the past maximum of $S$ holds whenever $S$ is in the domain of $T$ and $T^{-1}TS=S$. See \cite[Theorem 2.14]{CKSS} for details.
\end{rem}

\begin{rem} The same result holds for paths $S:\mathbb{Z}\rightarrow\mathbb{R}$ whose increments take values either $-1$ or $1$. For more general increments, the argument does not apply (since the future infimum of $TS$ and the past maximum of $S$ do not necessarily agree).
\end{rem}

\begin{exmp} The simplest non-trivial application of the previous result (and the previous remark) is when $S$ is a simple random walk with i.i.d.\ Bernoulli increments and strictly positive drift (i.e.\ the path encoding of Subsection \ref{iidsec}). In this case, the right-hand side of \eqref{fffff} can be replaced by the unconditioned process.
\end{exmp}

\begin{exmp} We next consider the case when $S$ is a path with Markovian increments of the form described in Subsection \ref{markovsec}. In this case, the conditioning on right-hand side of \eqref{fffff} can be replaced by the initial condition $\eta_0=0$ (using the BBS notation of earlier sections).
\end{exmp}

\begin{exmp} For $S$ the zigzag process of Subsection \ref{zzsec}, the result applies, and the conditioning on right-hand side of \eqref{fffff} can also be replaced by the initial condition $\eta_0=0$ (i.e.\ $S$ has a gradient of $1$ at 0).
\end{exmp}

\section*{Acknowledgements}

DC would like to acknowledge the support of his JSPS Grant-in-Aid for Research Activity Start-up, 18H05832, and MS would like to acknowledge the support of her JSPS Grant-in-Aid for Scientific Research (B), 16KT0021.

\providecommand{\bysame}{\leavevmode\hbox to3em{\hrulefill}\thinspace}
\providecommand{\MR}{\relax\ifhmode\unskip\space\fi MR }
\providecommand{\MRhref}[2]{%
  \href{http://www.ams.org/mathscinet-getitem?mr=#1}{#2}
}
\providecommand{\href}[2]{#2}


\begin{thebibliography}{10}

\bibitem{Albenque}
M.~Albenque, \emph{A note on the enumeration of directed animals via gas
  considerations}, Ann. Appl. Probab. \textbf{19} (2009), no.~5, 1860--1879.

\bibitem{BD}
J.~Bertoin and R.~A. Doney, \emph{On conditioning a random walk to stay
  nonnegative}, Ann. Probab. \textbf{22} (1994), no.~4, 2152--2167.

\bibitem{Burke}
P.~J. Burke, \emph{The output of a queuing system}, Operations Res. \textbf{4}
  (1956), 699--704 (1957).

\bibitem{CKSS}
D.~A. Croydon, T.~Kato, M.~Sasada, and S.~Tsujimoto, \emph{Dynamics of the
  box-ball system with random initial conditions via pitman's transformation},
  preprint appears at arXiv:1806.02147, 2018.

\bibitem{CST}
D.~A. Croydon, M.~Sasada, and S.~Tsujimoto, \emph{Dynamics of the
  ultra-discrete {T}oda lattice via {P}itman's transformation}, forthcoming.

\bibitem{DZ}
A.~Dembo and O.~Zeitouni, \emph{Large deviations techniques and applications},
  Stochastic Modelling and Applied Probability, vol.~38, Springer-Verlag,
  Berlin, 2010, Corrected reprint of the second (1998) edition.

\bibitem{DMOC}
M.~Draief, J.~Mairesse, and N.~O'Connell, \emph{Queues, stores, and tableaux},
  J. Appl. Probab. \textbf{42} (2005), no.~4, 1145--1167.

\bibitem{Ferrari}
P.~A. Ferrari, C.Nguyen, L.~Rolla, and M.~Wang, \emph{Soliton decomposition of
  the box-ball system}, preprint appears at arXiv:1806.02798, 2018.

\bibitem{FG}
P.~A. Ferrari and D.~Gabrielli, \emph{Bbs invariant measures with independent
  soliton components}, preprint appears at arXiv:1812.02437, 2018.

\bibitem{GT}
P.~W. Glynn and H.~Thorisson, \emph{Two-sided taboo limits for {M}arkov
  processes and associated perfect simulation}, Stochastic Process. Appl.
  \textbf{91} (2001), no.~1, 1--20.

\bibitem{GT2}
\bysame, \emph{Structural characterization of taboo-stationarity for general
  processes in two-sided time}, Stochastic Process. Appl. \textbf{102} (2002),
  no.~2, 311--318.

\bibitem{HMOC}
B.~M. Hambly, J.~B. Martin, and N.~O'Connell, \emph{Pitman's {$2M-X$} theorem
  for skip-free random walks with {M}arkovian increments}, Electron. Comm.
  Probab. \textbf{6} (2001), 73--77.

\bibitem{Har}
T.~E. Harris, \emph{Random measures and motions of point processes}, Z.
  Wahrscheinlichkeitstheorie und Verw. Gebiete \textbf{18} (1971), 85--115.

\bibitem{HW}
J.~M. Harrison and R.~J. Williams, \emph{On the quasireversibility of a
  multiclass {B}rownian service station}, Ann. Probab. \textbf{18} (1990),
  no.~3, 1249--1268.

\bibitem{IKT}
R.~Inoue, A.~Kuniba, and T.~Takagi, \emph{Integrable structure of box-ball
  systems: crystal, {B}ethe ansatz, ultradiscretization and tropical geometry},
  J. Phys. A \textbf{45} (2012), no.~7, 073001, 64.

\bibitem{InTa}
R.~Inoue and T.~Takenawa, \emph{Tropical spectral curves and integrable
  cellular automata}, Int. Math. Res. Not. IMRN (2008), no.~9, Art ID. rnn019,
  27.

\bibitem{Kac}
M.~Kac, \emph{A stochastic model related to the telegrapher's equation}, Rocky
  Mountain J. Math. \textbf{4} (1974), 497--509, Reprinting of an article
  published in 1956, Papers arising from a Conference on Stochastic
  Differential Equations (Univ. Alberta, Edmonton, Alta., 1972).

\bibitem{KiTo}
T.~Kimijima and T.~Tokihiro, \emph{Initial-value problem of the discrete
  periodic {T}oda equation and its ultradiscretization}, Inverse Problems
  \textbf{18} (2002), no.~6, 1705--1732.

\bibitem{Lev}
L.~Levine, H.~Lyu, and J.~Pike, \emph{Double jump phase transition in a soliton
  cellular automaton}, preprint appears at arXiv:1706.05621, 2017.

\bibitem{NTS}
A.~Nagai, T.~Tokihiro, and J.~Satsuma, \emph{Ultra-discrete {T}oda molecule
  equation}, Phys. Lett. A \textbf{244} (1998), no.~5, 383--388.

\bibitem{OCY}
N.~O'Connell and M.~Yor, \emph{Brownian analogues of {B}urke's theorem},
  Stochastic Process. Appl. \textbf{96} (2001), no.~2, 285--304.

\bibitem{Pitman}
J.~W. Pitman, \emph{One-dimensional {B}rownian motion and the three-dimensional
  {B}essel process}, Advances in Appl. Probability \textbf{7} (1975), no.~3,
  511--526.

\bibitem{PS}
S.~C. Port and C.~J. Stone, \emph{Infinite particle systems}, Trans. Amer.
  Math. Soc. \textbf{178} (1973), 307--340.

\bibitem{RDYO}
M.~Rigol, V.~Dunjko, V.~Yurovsky, and M.~Olshanii, \emph{Relaxation in a
  completely integrable many-body quantum system: An ab initio study of the
  dynamics of the highly excited states of 1d lattice hard-core bosons}, Phys.
  Rev. Lett. \textbf{98} (2007), 050405.

\bibitem{RMO}
M.~Rigol, A.~Muramatsu, and M.~Olshanii, \emph{Hard-core bosons on optical
  superlattices: Dynamics and relaxation in the superfluid and insulating
  regimes}, Phys. Rev. A \textbf{74} (2006), 053616.

\bibitem{TComm}
T.~Takagi, \emph{Commuting time evolutions in the tropical periodic toda
  lattice}, Journal of the Physical Society of Japan \textbf{81} (2012),
  no.~10, 104005.

\bibitem{takahashi1990}
D.~Takahashi and J.~Satsuma, \emph{A soliton cellular automaton}, J. Phys. Soc.
  Japan \textbf{59} (1990), 3514--3519.

\bibitem{T}
T.~Tokihiro, \emph{Ultradiscrete systems (cellular automata)}, Discrete
  integrable systems, Lecture Notes in Phys., vol. 644, Springer, Berlin, 2004,
  pp.~383--424.

\bibitem{TT}
\bysame, \emph{The mathematics of box-ball systems}, Asakura Shoten, 2010.

\bibitem{TTMS}
T.~Tokihiro, D.~Takahashi, J.~Matsukidaira, and J.~Satsuma, \emph{From soliton
  equations to integrable cellular automata through a limiting procedure},
  Phys. Rev. Lett. \textbf{76} (1996), no.~18, 3247--3250.

\bibitem{Torii}
M.~Torii, D.~Takahashi, and J.~Satsuma, \emph{Combinatorial representation of
  invariants of a soliton cellular automaton}, Phys. D \textbf{92} (1996),
  no.~3, 209 -- 220.

\bibitem{VR}
L.~Vidmar and M.~Rigol, \emph{Generalized gibbs ensemble in integrable lattice
  models}, Journal of Statistical Mechanics: Theory and Experiment
  \textbf{2016} (2016), no.~6, 064007.

\bibitem{YYT}
D.~Yoshihara, F.~Yura, and T.~Tokihiro, \emph{Fundamental cycle of a periodic
  box-ball system}, J. Phys. A \textbf{36} (2003), no.~1, 99--121.

\bibitem{YT}
F.~Yura and T.~Tokihiro, \emph{On a periodic soliton cellular automaton}, J.
  Phys. A \textbf{35} (2002), no.~16, 3787--3801.

\end{thebibliography}
\end{document}